\documentclass[pdflatex,sn-mathphys-num]{sn-jnl}

\usepackage{graphicx}%
\usepackage{multirow}%
\usepackage{amsmath,amssymb,amsfonts}%
\usepackage{amsthm}%
\usepackage{mathrsfs}%
\usepackage[title]{appendix}%
\usepackage{xcolor}%
\usepackage{textcomp}%
\usepackage{manyfoot}%
\usepackage{booktabs}%
\usepackage{algorithm}%
\usepackage{algorithmicx}%
\usepackage{algpseudocode}%
\usepackage{listings}%
\usepackage{tikz}
\usetikzlibrary{arrows.meta,positioning,fit,backgrounds,calc}

\newcommand{\C}{\mathbb{C}}
\newcommand{\R}{\mathbb{R}}
\newcommand{\Z}{\mathbb{Z}}
\newcommand{\N}{\mathbb{N}}
\newcommand{\Cau}{\mathcal{C}}
\newcommand{\Lop}{\mathcal{L}}
\newcommand{\Lcx}{\mathfrak{L}}
\newcommand{\Gw}{\Gamma_{\theta,c}}

\newcommand{\dist}{\operatorname{dist}}
\newcommand{\Real}{\operatorname{Re}}

\newcommand{\bd}{\boldsymbol}
\newcommand{\Lq}{\mathcal{L}_{q}}
\newcommand{\Cint}{\mathcal{C}_{\mathrm{int}}}
\newcommand{\Eop}{\mathcal{E}_{p,q,\theta,c}}
\newcommand{\Ahol}{\mathcal{A}}
\newcommand{\Log}{\operatorname{Log}}

\theoremstyle{thmstyleone}
\newtheorem{theorem}{Theorem}
\newtheorem{lemma}[theorem]{Lemma}
\newtheorem{proposition}[theorem]{Proposition}%
\newtheorem{assumption}[theorem]{Assumption}
\theoremstyle{thmstyletwo}

\newtheorem{remark}{Remark}
\newtheorem{corollary}[theorem]{Corollary}

\theoremstyle{thmstylethree}
\newtheorem{definition}{Definition}

\begin{document}

\title[Finite-sheeted Cauchy operator at rational corners]
{Finite-sheeted Cauchy operator at rational corners}

\author*[1]{\fnm{Louis Shuo} \sur{Wang}}
\email{wang.s41@northeastern.edu}

\affil[1]{\orgdiv{Department of Mathematics},
\orgname{Northeastern University},
\orgaddress{
\city{Boston},
\state{MA},
\postcode{02115},
\country{USA}
}}

\abstract{We study Cauchy singular integral operators on planar wedges whose opening
angle is a rational multiple of $\pi$. For $\theta=p\pi/q$, the covering
$w=\zeta^q$ yields an exact finite-sheeted factorization of the wedge Cauchy
transform into $2q$ interval Cauchy transforms with explicit algebraic
recombination coefficients. The factorization is formulated on weighted
conormal H\"older spaces. We prove that the lifting operator preserves conormal
order, lowers the H\"older exponent from $\beta$ to $\beta/q$, and has sharp
$\ell^1$ sheet norm $q$. Combining this operator factorization with a Mellin
model for interval Cauchy transforms, we derive a mode-by-mode propagation rule
for polyhomogeneous endpoint expansions. Nonresonant powers preserve their
logarithmic order, while integer exponents raise it by one. The results also
give a local singular decomposition for Cauchy operators on piecewise analytic
curves with rational corner angles.}

\keywords{Cauchy transform; Cauchy singular integral operators; rational corners;
conormal H\"older spaces; Mellin asymptotics; logarithmic potentials; layer potentials}

\pacs[MSC Classification]{30E20, 45E05, 30E15, 31A10, 35J05, 41A60}
\maketitle

\maketitle

\section{Introduction}
\begin{figure}[htbp]
\centering
\includegraphics[width=0.85\linewidth]{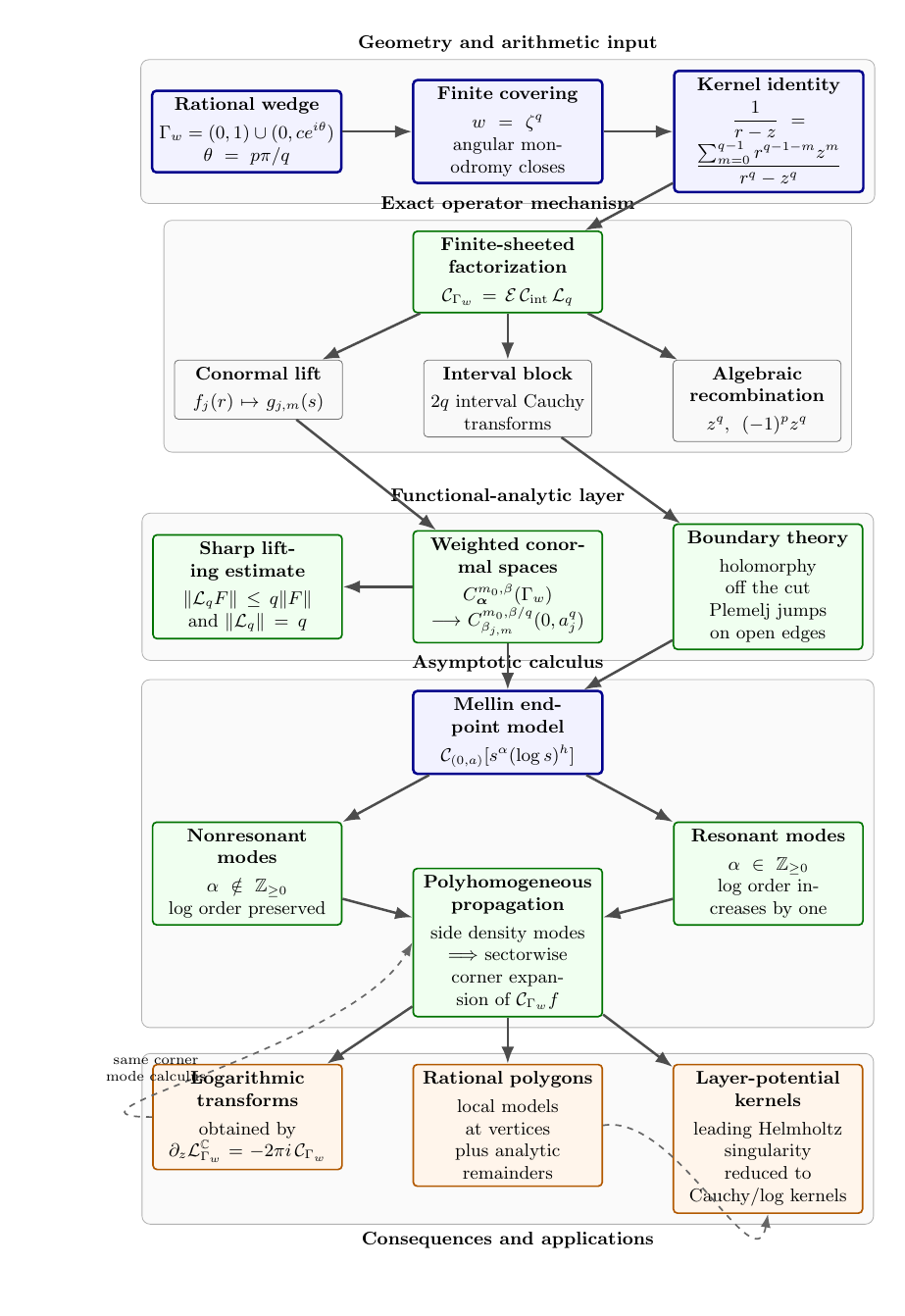}
\caption{Logical framework of the paper. A rational corner closes under the
finite covering $w=\zeta^q$, which converts the wedge Cauchy operator into a
finite block of interval Cauchy transforms. The conormal lifting estimate gives
the functional-analytic control, while the Mellin endpoint model gives the
mode-by-mode propagation rule. Logarithmic transforms, rational-polygon
localization, and Helmholtz layer-potential kernels are then obtained as
consequences of the same finite-sheeted corner calculus.}
\label{fig:logical-framework}
\end{figure}

In this paper, we consider \begin{equation}\label{eq:wedge}
  \Gw=(0,1)\cup(0,ce^{i\theta}),\qquad
  \theta=\frac{p\pi}{q},\quad (p,q)=1,\quad 0<\theta<2\pi,\quad c>0,
\end{equation}
with both sides oriented away from the vertex $0$. For an integrable density
$f$ the Cauchy transform is
\begin{equation}\label{eq:cauchy-def}
  \Cau_{\Gw}f(z)=\frac{1}{2\pi i}\int_{\Gw}\frac{f(\zeta)}{\zeta-z}\,d\zeta,
  \qquad z\in\C\setminus\Gw ,
\end{equation}
and we write the side restrictions as
\begin{equation}\label{eq:sides}
  f_1(r)=f(r),\quad 0<r<1,\qquad f_2(r)=f(re^{i\theta}),\quad 0<r<c .
\end{equation}
For a finite segment $(0,a)\subset\R$ and $g\in L^1(0,a)$ we use the interval
Cauchy and logarithmic transforms
\begin{equation}\label{eq:interval}
  \Cau_{(0,a)}[g](\xi)=\frac{1}{2\pi i}\int_0^a\frac{g(r)}{r-\xi}\,dr,
  \qquad
  \Lcx_{(0,a)}[g](\xi)=\int_0^a\log(r-\xi)\,g(r)\,dr,
\end{equation}
for $\xi\in\C\setminus[0,a]$. We write $\Ahol(U)$ for the holomorphic functions
on an open set $U\subset\C$. The vertex $0$ is the only geometric singularity of
the contour; it is the source of the algebraic branch behaviour underlying the
failure of smooth-boundary estimates for the associated singular integral
operators \cite{kerzman1978cauchy,bolt2015kerzman}.

\subsection{The finite-sheeted factorization}
The rationality of $\theta$ resolves the vertex by the covering $w=\zeta^{q}$.
The mechanism is algebraic: from $r^{q}-z^{q}=\prod_{k}(r-z\omega^{k})$,
$\omega=e^{2\pi i/q}$, one has the geometric-sum identity
\begin{equation}\label{eq:geo}
  \frac{1}{r-z}=\frac{1}{r^{q}-z^{q}}\sum_{m=0}^{q-1}r^{q-1-m}z^{m},
\end{equation}
which, applied to each side after $s=r^{q}$, yields the decomposition
(Theorem~\ref{thm:decomposition})
\begin{equation}\label{eq:main}
  \Cau_{\Gw}f(z)
  =\frac1q\sum_{m=0}^{q-1}z^{q-1-m}\,\Cau_{(0,1)}[g_{1,m}](z^{q})
  +\frac1q\sum_{m=0}^{q-1}\bigl(e^{-i\theta}z\bigr)^{q-1-m}\,
     \Cau_{(0,c^{q})}[g_{2,m}]\bigl((-1)^{p}z^{q}\bigr),
\end{equation}
with explicit lifted densities
\begin{equation}\label{eq:gjm}
  g_{j,m}(s)=f_j\!\bigl(s^{1/q}\bigr)\,s^{(m+1)/q-1},
  \qquad j=1,2,\ m=0,\dots,q-1 .
\end{equation}
This paper organizes \eqref{eq:main} as an honest operator factorization and
builds the analytic theory it supports. We package \eqref{eq:main} as
\begin{equation}\label{eq:factorization}
  \Cau_{\Gw}=\Eop\,\Cint\,\Lq,
\end{equation}
where $\Lq$ is the conormal lifting operator $F=(f_1,f_2)\mapsto(g_{j,m})$,
$\Cint=\operatorname{diag}\bigl(\Cau_{(0,1)},\dots,\Cau_{(0,c^{q})}\bigr)$ is the
diagonal block of interval Cauchy operators, and $\Eop$ is the algebraic
evaluation--recombination operator (\S\ref{sec:factor}). The terminology
``finite-sheeted'' refers throughout to the algebraic organization of the kernel
by the covering $w=\zeta^{q}$, not to a symmetric extension of the density to
all sheets: only the radial variable is lifted, side by side, and the density is
never continued to a globally defined function on the covering. This
distinguishes \eqref{eq:main} from the generic rational-power and
Hilbert-transform change-of-variable literature, in which transforms of $f$ are
related to transforms of $f(x^{r})$ and rational powers lead to Cauchy
transforms on Riemann surfaces \cite{king2009hilbert,olver2011computing}; Olver's
Riemann--Hilbert framework, including reductions to intervals and half-lines
\cite{olver2011computing}, is the closest precedent. The contribution here is the
explicit single-branch, orientation-aware factorization \eqref{eq:factorization}
and the operator-theoretic and asymptotic calculus it supports.

\subsection{Main results}
The paper proves six families of results.

\emph{(I) Operator factorization (\S\ref{sec:factor}).} The identity
\eqref{eq:factorization} holds between conormal weighted H\"older spaces on
$\Gw$ and $\Ahol(\C\setminus\Gw)$ (Theorem~\ref{thm:factorization}). The
contrasting symmetric-star collapse (Lemma~\ref{lem:star}) clarifies why the
wedge factorization is single-branch.

\emph{(II) Conormal mapping theory (\S\ref{sec:conormal}).} The lifting operator
$\Lq$ is bounded between conormal weighted H\"older spaces, with the explicit
sheet-count bound
\begin{equation}\label{eq:Lq-bound-intro}
  \|\Lq F\|\le q\,\|F\|,
\end{equation}
independent of $\alpha,\beta,a,m_0$ and sharp, so $\|\Lq\|=q$
(Theorem~\ref{thm:Lq-bounded}). The interval Cauchy operators are holomorphic
off the cut, satisfy a separated evaluation bound, and admit Plemelj boundary
values on compact subintervals (Proposition~\ref{prop:interval-mapping}); the
composite \eqref{eq:factorization} inherits these.

\emph{(III) Polyhomogeneous corner-mode propagation (\S\ref{sec:propagation}).}
For densities with classical conormal expansions
\[
  f_j(r)\sim\sum_{\ell\ge0}\sum_{h=0}^{N_\ell}a_{j,\ell,h}\,
     r^{\alpha_{j,\ell}}(\log r)^{h},\qquad r\downarrow0,
\]
the transform admits a sectorwise vertex expansion
\[
  \Cau_{\Gw}f(z)\sim
  \sum_{j,\ell,h'}A^{\bullet}_{j,\ell,h'}\,z^{\alpha_{j,\ell}}(\log z)^{h'}
  +\sum_{\nu,h'}B^{\bullet}_{\nu,h'}\,z^{\nu}(\log z)^{h'},
\]
with explicit branch-dependent coefficients, in which a nonresonant mode
preserves the top logarithmic power and a resonance $\alpha_{j,\ell}\in\Z_{\ge0}$
raises it by exactly one (Theorem~\ref{thm:propagation}). The engine is the
identity $r^{\alpha}(\log r)^{h}=\partial_\alpha^{h}r^{\alpha}$ applied to a
Mellin model (Lemma~\ref{lem:polyhom-model}).

\emph{(IV) Logarithmic transforms (\S\ref{sec:log}).} The lift transports to a
finite-sheeted logarithmic decomposition, characterized through the
antiderivative relation $\partial_z\Lcx_{\Gw}=-2\pi i\,\Cau_{\Gw}$ together with a
component-dependent branch constant (Theorem~\ref{thm:log}); and we give the
corrected relation between the real logarithmic transform $\Lop_{\Gw}$ and its
complexification $\Lcx_{\Gw}$ (Proposition~\ref{prop:realpart}).

\emph{(V) Helmholtz singular structure (\S\ref{sec:helmholtz}).} Near a rational
polygonal corner the Helmholtz single-, double-, and adjoint double-layer
operators decompose into a rational-wedge corner operator plus a strictly
smoother remainder; the corner double-layer and adjoint operators are
real-linear combinations of boundary values of $\Cau_{\Gw}$ and its conjugate
(Theorem~\ref{thm:helmholtz}, Proposition~\ref{prop:potential-algebra}).

\emph{(VI) Localization on rational polygons (\S\ref{sec:localization}).} On a
piecewise analytic Jordan curve with rational corner angles the Cauchy operator
decomposes as
\[
  \Cau_\Gamma=\sum_{v\in V(\Gamma)}\Cau_v+\Cau_\Gamma^{\mathrm{sm}}+\mathcal R_\Gamma,
\]
where each $\Cau_v$ is a transported finite-sheeted wedge model, $\Cau_\Gamma^{\mathrm{sm}}$
is the smooth-arc Cauchy operator, and $\mathcal R_\Gamma$ is analytic in a
neighbourhood of every vertex (Theorem~\ref{thm:localization}). Consequently the
vertex asymptotics of $\Cau_\Gamma f$ are the direct sum of the wedge expansions
of \S\ref{sec:propagation}, with no new exponents from curvature or inter-vertex
coupling (Corollary~\ref{cor:vertexwise}); the same localization carries the
logarithmic and Helmholtz operators.

A short outlook (\S\ref{sec:outlook}) addresses irrational angles. The novelty is
an exact local-to-global symbolic calculus for the singular part of these
operators at rational-angle corners, not a quadrature or solver.

\subsection{Literature Review}
The Plemelj--Privalov theory of weighted H\"older spaces on arcs is classical
\cite{reyes2003one,muskhelishvili2008singular,gakhov2014boundary,wang2025analysis,wang2025analysis1,pritsker2008find,abreu2012plemelj,yu2026from,abreu2007notion,liu2025bidirectional}, as is the use of conormal/Mellin scales for corner
asymptotics: the Mellin analysis near conical and angular
points can be found in \cite{schrohe2023introduction,liang2025global,schulze2022mellin,anjam2020geometric,maday2019regularity,wang2026algebraic}.
Our propagation theorem (\S\ref{sec:propagation}) is the transform-side
analogue: it tracks how such expansions of the boundary density are carried,
mode by mode, through the Cauchy operator, with an explicit resonance rule. At
the level of boundedness, the deep theory is the $L^{2}$ boundedness of the
Cauchy integral on Lipschitz curves
\cite{david1984boundedness,gao2022rolling,rochberg1984calderon,calderon1977cauchy,wang2026damage,mcintosh2006convolution,wang2025multi,coifman1982integrale,coifman1989two,yu2026pattern,qian2019singular,jones2006square}, on which the method of
layer potentials on nonsmooth domains rests \cite{cai2026optimal,verchota1984layer,costabel1988boundary,mitrea1997method,wang2026breakdown,mitrea2013multi,fabes1977double};
the present paper is complementary, giving not an estimate but an exact local
symbolic calculus for the singular part at a rational corner. Closest in spirit
is the Mellin-transform method for boundary integral equations on curves with
corners, in particular the analysis of the double-layer potential on polygons
\cite{costabel1983normal,yu2026beyond,maz1991boundary,qiao2012single,yu2026rigorous,qiao2018double}; our finite-sheeted factorization makes the
underlying covering $w=\zeta^{q}$ and its branch recombination explicit. The
loss of compactness of the Kerzman--Stein operator $\Cau-\Cau^{*}$ at corners, compact
in the smooth case, is established for piecewise
continuously differentiable curves in \cite{bolt2015kerzman}, where the finite
symmetric wedge furnishes the essential spectrum; the present calculus is the
constructive counterpart, tracking the algebraic modes responsible
(\S\ref{sec:propagation}). Classical boundary integral treatments of corners
proceed by graded meshes and specialized quadrature \cite{kress1989linear,kress1991boundary}; we
instead give an exact local operator factorization, which is complementary
(\S\ref{sec:helmholtz}).

\subsection{Organization}
Section~\ref{sec:factor} establishes the rotation reduction, the single-interval
lift, the symmetric-star contrast, and the operator factorization
\eqref{eq:factor-thm}. Section~\ref{sec:conormal} develops the conormal weighted
H\"older scale, proves the conormal mapping theorem for $\Lq$, and records the
interval Cauchy mapping and the Plemelj jump relations.
Section~\ref{sec:propagation} is the asymptotic core: the Mellin model for pure
powers, its polyhomogeneous extension via differentiation in the exponent, and
the full sectorwise propagation theorem with the resonance rule.
Section~\ref{sec:log} transports the calculus to logarithmic transforms and
corrects the real-part relation. Section~\ref{sec:helmholtz} gives the Helmholtz
singular-operator algebra and the local corner decomposition, with a comparison
to classical corner treatments. Section~\ref{sec:examples} works out the right
angle and checks the resonance rule against exact formulae;
Section~\ref{sec:localization} proves the local-to-global decomposition on
rational polygons; and Section~\ref{sec:outlook} discusses irrational angles.
\section{Finite-sheeted factorization of rational-wedge Cauchy transforms}
\label{sec:factor}

\subsection{Elementary reduction}
The factorization is based on a two-term reduction that is valid for any opening angle.

\begin{proposition}[Rotation reduction]\label{prop:reduction}
Let $\theta\in(0,2\pi)$, $c>0$, and let $f$ have side restrictions
\eqref{eq:sides} with $f_1\in L^1(0,1)$, $f_2\in L^1(0,c)$. Then for every
$z\in\C\setminus\Gw$,
\begin{equation}\label{eq:reduction}
  \Cau_{\Gw}f(z)=\Cau_{(0,1)}[f_1](z)+\Cau_{(0,c)}[f_2]\!\bigl(e^{-i\theta}z\bigr).
\end{equation}
\end{proposition}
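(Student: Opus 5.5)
The plan is to split the contour integral \eqref{eq:cauchy-def} along the two sides of $\Gw$ and parametrize each side by arc length from the vertex. Both sides are oriented away from $0$, so no sign changes arise. Additivity of the line integral over the union $(0,1)\cup(0,ce^{i\theta})$ holds because the sides meet only at the vertex, a set of measure zero. It gives $\Cau_{\Gw}f(z)=I_1(z)+I_2(z)$, where $I_1$ and $I_2$ are the contributions of the real side and the rotated side.

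For the first side, parametrize $\zeta=r$, $0<r<1$, with $d\zeta=dr$ and $f(\zeta)=f_1(r)$. This gives $I_1(z)=\frac{1}{2\pi i}\int_0^1 \frac{f_1(r)}{r-z}\,dr=\Cau_{(0,1)}[f_1](z)$ by \eqref{eq:interval}. For the second side, parametrize $\zeta=re^{i\theta}$, $0<r<c$, with $d\zeta=e^{i\theta}dr$ and $f(\zeta)=f_2(r)$. Factoring $e^{i\theta}$ out of the denominator gives
\[
\frac{e^{i\theta}\,dr}{re^{i\theta}-z}=\frac{dr}{r-e^{-i\theta}z},
\]
so that $I_2(z)=\Cau_{(0,c)}[f_2](e^{-i\theta}z)$. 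Adding $I_1$ and $I_2$ yields \eqref{eq:reduction}.

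The one point requiring care is to check that both interval transforms are evaluated off their cuts and that the integrals converge absolutely. If $z\notin\Gw$, then $z\notin[0,1]$ unless $z=0$ or $z=1$. The endpoint cases are handled by taking $\Gw$ to mean the closed contour, as is implicit in the stated domain $\C\setminus\Gw$, or by restricting to $z$ off the closure. In the same way, $e^{-i\theta}z\in[0,c]$ exactly when $z\in[0,ce^{i\theta}]$. So for $z$ off the closed contour, the distance $\delta=\dist(z,\overline{\Gw})>0$ bounds both kernels by $1/\delta$. The integrability assumptions $f_1\in L^1(0,1)$ and $f_2\in L^1(0,c)$ then make each integral absolutely convergent, and this justifies both the splitting and the change of variables. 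No step is a genuine obstacle; the only subtlety is this bookkeeping of the excluded set and the orientation.
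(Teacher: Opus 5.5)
Your proof is correct and follows the paper's argument: split the contour integral over the two sides, parametrize by $\zeta=r$ and $\zeta=re^{i\theta}$, and cancel the factor $e^{i\theta}$ between $d\zeta$ and the denominator. Your extra care about the endpoints $0,1,ce^{i\theta}$ is a legitimate refinement that the paper's one-line proof leaves implicit: these points lie in $\C\setminus\Gw$ when the sides are open segments, yet the interval transforms in \eqref{eq:interval} are not defined there.
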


\begin{proof}
The first side, $\zeta=r$, $d\zeta=dr$, gives $\Cau_{(0,1)}[f_1](z)$. On the
second, $\zeta=re^{i\theta}$, $d\zeta=e^{i\theta}dr$, so
\[
  \frac{1}{2\pi i}\int_0^c\frac{f_2(r)\,e^{i\theta}}{re^{i\theta}-z}\,dr
  =\frac{1}{2\pi i}\int_0^c\frac{f_2(r)}{r-ze^{-i\theta}}\,dr
  =\Cau_{(0,c)}[f_2]\!\bigl(e^{-i\theta}z\bigr).\qedhere
\]
\end{proof}

Figure~\ref{fig:rotation_reduction} illustrates the elementary rotation reduction, which uncouples the two boundary sides of the wedge into standard intervals on the positive real axis.

\begin{figure}[htbp]
    \centering
    \includegraphics[width=0.85\textwidth]{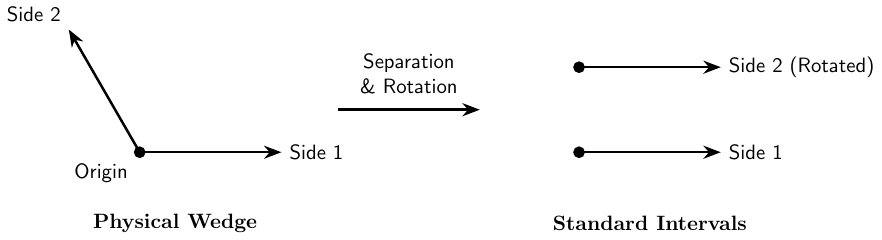}
    \caption{Schematic of the elementary rotation reduction. The physical rational wedge $\Gamma_{\theta,c}$ (left) is geometrically uncoupled into two standard horizontal intervals on the positive real axis (right) via separation and a clockwise rotation of Side 2 by $-\theta$, illustrating the boundary splitting mechanism analyzed in Proposition~\ref{prop:reduction}.}
    \label{fig:rotation_reduction}
\end{figure}

\begin{remark}\label{rem:no-cut}
The right side of \eqref{eq:reduction} is defined whenever
$e^{-i\theta}z\notin(0,c)$, i.e., whenever $z$ lies off the second side; no root
function and no branch cut occur. The cut on $(-\infty,0]$ in the lifted
formulation below is an artifact of the covering and is absent from the honest
reduction.
\end{remark}

\subsection{Lifting one interval transform}
The geometric-sum identity lifts a single interval transform to a finite sum in
$\xi^{q}$.

\begin{lemma}[Single-interval lift]\label{lem:lift}
Let $a>0$, $g\in L^1(0,a)$, and $\xi\in\C$ with $\xi^{q}\notin(0,a^{q})$. Then
\begin{equation}\label{eq:lift}
  \Cau_{(0,a)}[g](\xi)
  =\frac1q\sum_{m=0}^{q-1}\xi^{q-1-m}\,\Cau_{(0,a^{q})}[\tilde g_m](\xi^{q}),
  \qquad
  \tilde g_m(s)=g\!\bigl(s^{1/q}\bigr)\,s^{(m+1)/q-1},
\end{equation}
$s^{1/q}$ the real positive root on $(0,a^{q})$. The right side involves only the
single-valued power $\xi^{q}$ and the polynomial weights $\xi^{q-1-m}$.
\end{lemma}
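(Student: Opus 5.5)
The plan is to prove \eqref{eq:lift} directly from the geometric-sum identity \eqref{eq:geo}, applied inside the integral, followed by the substitution $s=r^{q}$.

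\emph{Pointwise identity.} Fix $\xi$ with $\xi^{q}\notin(0,a^{q})$. For $r\in(0,a)$ we have $r^{q}\in(0,a^{q})$, so $r^{q}\neq\xi^{q}$. Since $r^{q}-\xi^{q}=\prod_{k}(r-\xi\omega^{k})$, this also gives $r\neq\xi$. The identity
\[
(r-\xi)\sum_{m=0}^{q-1}r^{q-1-m}\xi^{m}=r^{q}-\xi^{q}
\]
is a polynomial telescoping. It yields, for every $r\in(0,a)$,
\[
\frac{1}{r-\xi}=\sum_{m=0}^{q-1}\frac{r^{q-1-m}\xi^{m}}{r^{q}-\xi^{q}}.
\]
Reindexing $m\mapsto q-1-m$ rewrites the summand as $\xi^{q-1-m}r^{m}/(r^{q}-\xi^{q})$.

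\emph{Integration and substitution.} Multiply by $g(r)/(2\pi i)$ and integrate over $(0,a)$. The sum is finite, so it commutes with the integral, provided each term is integrable. To check this, note that $(0,a^{q})$ is bounded away from $\xi^{q}$: either $\xi^{q}$ lies off the closed segment $[0,a^{q}]$, or it is an endpoint, in which case $\xi=0$ or $|\xi|=a$. The endpoint case is handled below. In the generic case, $r\mapsto r^{m}/(r^{q}-\xi^{q})$ is bounded on $[0,a]$, so each term is in $L^1$.

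Now substitute $s=r^{q}$, so that $r=s^{1/q}$ with the positive root and $dr=\tfrac1q s^{1/q-1}ds$. This turns $r^{m}\,dr$ into $\tfrac1q s^{m/q+1/q-1}ds=\tfrac1q s^{(m+1)/q-1}ds$. The $m$-th term therefore becomes
\[
\frac1q\,\xi^{q-1-m}\,\frac{1}{2\pi i}\int_{0}^{a^{q}}\frac{\tilde g_m(s)}{s-\xi^{q}}\,ds,
\]
which is exactly $\tfrac1q\,\xi^{q-1-m}\,\Cau_{(0,a^{q})}[\tilde g_m](\xi^{q})$. The same change of variables shows $\|\tilde g_m\|_{L^1(0,a^{q})}=q\int_0^a|g(r)|\,r^{m}\,dr\le q\,a^{m}\|g\|_{L^1}$, so $\tilde g_m\in L^1$ and the interval transform is well defined off $[0,a^{q}]$.

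\emph{Main obstacle: endpoint values.} The only delicate point is the case $\xi^{q}\in\{0,a^{q}\}$. The hypothesis only excludes the open interval, so these values are allowed.
\begin{itemize}
\item If $\xi=0$, only the $m=q-1$ term survives. It reads $\tfrac1q\Cau_{(0,a^{q})}[\tilde g_{q-1}](0)$, with $\tilde g_{q-1}(s)=g(s^{1/q})$. The substitution gives $\int_0^a g(r)\,r^{-1}\,dr\cdot(2\pi i)^{-1}$ on both sides, so the two sides agree as (possibly improper) integrals whenever either is defined.
\item If $|\xi|=a$, the singularity sits at the endpoint $s=a^{q}$. The two sides are again equal as integrals over the same variable after substitution: the integrands agree pointwise and absolute integrability transfers through the substitution.
\end{itemize}
I would state that at these points the identity holds whenever the left side converges absolutely, which is the natural reading of the hypothesis.

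Finally, the right side involves $\xi$ only through $\xi^{q}$ and the monomials $\xi^{q-1-m}$, so no branch of a root of $\xi$ is needed.
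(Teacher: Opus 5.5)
Your argument for $\xi^{q}\notin[0,a^{q}]$ is correct and is essentially the paper's proof. You insert \eqref{eq:geo} under the integral and substitute $s=r^{q}$; the only difference is that you reindex $m\mapsto q-1-m$ before the substitution rather than after. Your bound $\|\tilde g_m\|_{L^1}=q\int_0^a|g(r)|r^{m}\,dr$ also makes explicit the well-definedness that the paper leaves tacit.

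Your endpoint bullet for $|\xi|=a$, however, asserts something false. Take $\xi=a\omega^{k}$ with $k\neq0$. The left side converges absolutely for every $g\in L^1(0,a)$, since $\xi\notin[0,a]$. But the right-hand terms $\Cau_{(0,a^{q})}[\tilde g_m](a^{q})$ are evaluated at the endpoint of their cut and generally diverge. The zero of $r^{q}-\xi^{q}$ at $r=a$ cancels only in the full sum $\sum_m\xi^{q-1-m}r^{m}/(r^{q}-\xi^{q})=1/(r-\xi)$, not termwise, so integrability of the sum does not pass to the summands.

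Concretely, take $q=2$, $a=1$, $\xi=-1$, $g\equiv1$. The left side is $\ln 2/(2\pi i)$, while $\Cau_{(0,1)}[s^{-1/2}](1)$ and $\Cau_{(0,1)}[1](1)$ both diverge logarithmically. Only the combination $\int_0^1(1-s^{-1/2})(s-1)^{-1}\,ds=2\ln 2$ converges, and it reproduces the left side. At $\xi=a$ itself your transfer argument does work, because $|r^{q}-a^{q}|\asymp|r-a|$ near $r=a$.

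Similarly, at $\xi=0$ your claim that ``only the $m=q-1$ term survives'' needs the convention $0\cdot\infty=0$. Indeed $\Cau_{(0,a^{q})}[\tilde g_m](0)=\frac{q}{2\pi i}\int_0^a g(r)r^{m-q}\,dr$ can diverge for $m<q-1$ even when $\int_0^a|g(r)|r^{-1}\,dr<\infty$.

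The right repair is not to patch these points. Read the hypothesis as $\xi^{q}\notin[0,a^{q}]$, so that every interval transform is evaluated off its closed cut; the paper defines $\Cau_{(0,a)}$ only off $[0,a]$ in the first place. This generic case is the only one the paper's formal computation actually covers. It is also the form in which the lemma is used in Theorems~\ref{thm:factorization} and~\ref{thm:decomposition}.
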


\begin{proof}
Insert \eqref{eq:geo} (with $r,z$ replaced by $r,\xi$) into
$\Cau_{(0,a)}[g](\xi)=\frac{1}{2\pi i}\int_0^a g(r)(r-\xi)^{-1}dr$:
\[
  \Cau_{(0,a)}[g](\xi)
  =\sum_{m=0}^{q-1}\xi^{m}\,\frac{1}{2\pi i}\int_0^a
   \frac{g(r)\,r^{q-1-m}}{r^{q}-\xi^{q}}\,dr .
\]
Substituting $s=r^{q}$, $ds=q\,r^{q-1}dr$, so $r^{q-1-m}dr=\tfrac1q s^{-m/q}ds$
and $r=s^{1/q}$, gives
$\frac1q\Cau_{(0,a^{q})}[g(s^{1/q})s^{-m/q}](\xi^{q})$. Reindexing
$m\mapsto q-1-m$ turns $\xi^{m}$ into $\xi^{q-1-m}$ and $s^{-m/q}$ into
$s^{(m+1)/q-1}$, giving \eqref{eq:lift}.
\end{proof}

As shown in Figure~\ref{fig:symmetric_star}, the physical rational wedge can be naturally embedded into a fully symmetric star framework.

\begin{figure}[htbp]
    \centering
    \includegraphics[width=0.85\textwidth]{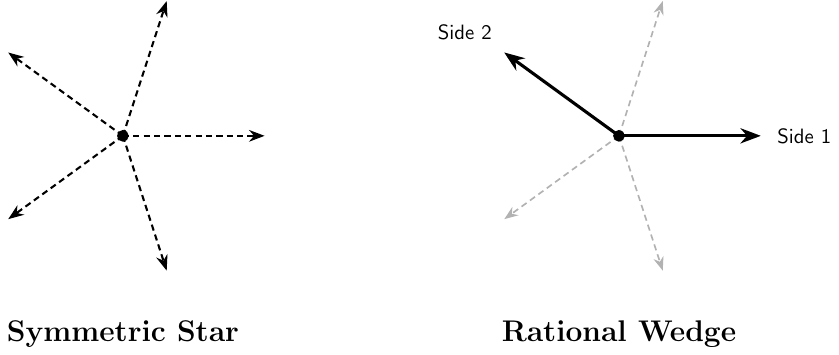}
    \caption{Geometric embedding of a rational physical wedge $\Gamma_{\theta,c}$ with $\theta = 144^\circ$ (right) into a fully symmetric 5-ray star structure (left), where the dashed lines indicate the underlying symmetric skeletal framework.}
    \label{fig:symmetric_star}
\end{figure}

\subsection{The symmetric-star contrast}
Identity \eqref{eq:lift} is the single-branch lift of one physical side. It must
not be confused with the symmetric residue collapse.

\begin{lemma}[Symmetric-star collapse]\label{lem:star}
Let $\Sigma_q=\bigcup_{k=0}^{q-1}(0,e^{2\pi ik/q})$ be the unit $q$-ray star,
each ray oriented outward, carrying the $q$-fold symmetric density equal to
$g_0\in L^1(0,1)$ on every ray. Then, for $z^{q}\notin(0,1)$,
\begin{equation}\label{eq:star}
  \Cau_{\Sigma_q}[g](z)=\Cau_{(0,1)}\!\bigl[g_0(s^{1/q})\bigr](z^{q}).
\end{equation}
\end{lemma}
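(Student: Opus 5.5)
The plan is to compute ray by ray, exactly as in Proposition~\ref{prop:reduction}, and then sum the rays before doing any change of variables. Write $\omega=e^{2\pi i/q}$. On the $k$-th ray parametrize $\zeta=r\omega^{k}$, $0<r<1$, so that $d\zeta=\omega^{k}\,dr$. With the symmetric density $g(r\omega^k)=g_0(r)$, the contribution of that ray is
\[
  \frac{1}{2\pi i}\int_0^1\frac{g_0(r)\,\omega^{k}}{r\omega^{k}-z}\,dr
  =\frac{1}{2\pi i}\int_0^1\frac{g_0(r)}{r-z\omega^{-k}}\,dr
  =\Cau_{(0,1)}[g_0]\bigl(\omega^{-k}z\bigr).
\]
The hypothesis $z^{q}\notin(0,1)$ is exactly the statement that $\omega^{-k}z\notin(0,1)$ for every $k$. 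So each term is defined, and $z$ lies off $\Sigma_q$.

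Since there are finitely many rays and $g_0\in L^1$, I can interchange the sum and the integral. This reduces the problem to the kernel identity
\[
  \sum_{k=0}^{q-1}\frac{1}{r-z\omega^{-k}}=\frac{q\,r^{q-1}}{r^{q}-z^{q}}.
\]
It is the logarithmic derivative in $r$ of $r^{q}-z^{q}=\prod_{k}(r-z\omega^{-k})$, which is the same factorization that underlies \eqref{eq:geo}. Alternatively, it follows by summing \eqref{eq:geo} over the rotated points $z\omega^{-k}$: the factor $\sum_k\omega^{-km}$ kills every $m\neq0$ and leaves $q\,r^{q-1}/(r^q-z^q)$. This step is where the symmetric star differs from the single-branch lift of Lemma~\ref{lem:lift}. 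There, all $q$ terms $m=0,\dots,q-1$ survive. Here the roots-of-unity averaging collapses the sum to the single mode $m=0$.

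Finally, I substitute $s=r^{q}$, so that $ds=q\,r^{q-1}\,dr$. This turns
\[
  \frac{1}{2\pi i}\int_0^1 g_0(r)\,\frac{q\,r^{q-1}}{r^q-z^q}\,dr
\]
into $\frac{1}{2\pi i}\int_0^1 g_0(s^{1/q})(s-z^q)^{-1}\,ds$, which is \eqref{eq:star}. Note that $g_0(s^{1/q})\in L^1(0,1)$ by the same substitution.

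There is no real obstacle in this argument. The only point that needs care is bookkeeping of orientation and measure: outward orientation produces the factor $\omega^{k}$ in $d\zeta$, and this factor must cancel against the $\omega^{k}$ in the denominator. Without that cancellation the roots-of-unity sum would not collapse.
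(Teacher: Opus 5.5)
Your proof is correct and is the same argument as the paper's: reduce each ray as in Proposition~\ref{prop:reduction}, sum using the logarithmic-derivative identity $\sum_k (r-z\omega^{-k})^{-1}=q\,r^{q-1}/(r^{q}-z^{q})$, and substitute $s=r^{q}$ (your alternative derivation from \eqref{eq:geo} via roots-of-unity averaging is a nice extra that makes the contrast with Lemma~\ref{lem:lift} explicit). One small caveat: the hypothesis $z^{q}\notin(0,1)$ still allows $z=0$ and the ray tips $z^{q}=1$, where the integrals need not converge for general $g_0\in L^1$ (already $g_0\equiv1$ fails), so ``each term is defined'' really requires $z^{q}\notin[0,1]$; this imprecision comes from the statement, not from your argument.
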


\begin{proof}
As in Proposition~\ref{prop:reduction}, the $k$-th ray contributes
$\frac{1}{2\pi i}\int_0^1 g_0(r)(r-z\omega^{-k})^{-1}dr$. Summing over $k$ and
using $\sum_{k}(r-z\omega^{-k})^{-1}=q r^{q-1}/(r^{q}-z^{q})$ (logarithmic
derivative of $r^{q}-z^{q}$) gives
$\frac{1}{2\pi i}\int_0^1 g_0(r)\,q r^{q-1}(r^{q}-z^{q})^{-1}dr$; the
substitution $s=r^{q}$ yields \eqref{eq:star}.
\end{proof}

Identity \eqref{eq:star} reduces a fully populated symmetric star to one
interval transform in $z^{q}$; it requires the same density on all $q$ rays. The
wedge \eqref{eq:wedge} populates only two rays of such a star, so it does not
collapse: it is governed by the two-term reduction \eqref{eq:reduction} and its
lift \eqref{eq:main}.

\subsection{The factorization operators}
We now assemble \eqref{eq:main} into operators. Fix $(p,q,\theta,c)$, set
$a_1=1$, $a_2=c$, and let $\bd\alpha=(\alpha_1,\alpha_2)$. The conormal weighted
spaces $C^{m_0,\beta}_{\bd\alpha}(\Gw)$ and $C^{m_0,\beta}_\mu(0,a)$ are defined
in \S\ref{sec:conormal}; here we use only that their elements are integrable
densities.

\begin{definition}[Factorization operators]\label{def:factor-ops}
Define:
\begin{enumerate}
\item the \emph{conormal lifting operator}
\[
  \Lq:\ F=(f_1,f_2)\longmapsto
   G=(g_{1,0},\dots,g_{1,q-1},g_{2,0},\dots,g_{2,q-1}),\qquad
   g_{j,m}(s)=f_j(s^{1/q})\,s^{(m+1)/q-1};
\]
\item the \emph{interval block}
$\Cint=\operatorname{diag}\bigl(\underbrace{\Cau_{(0,1)},\dots,\Cau_{(0,1)}}_{q},
\underbrace{\Cau_{(0,c^{q})},\dots,\Cau_{(0,c^{q})}}_{q}\bigr)$, sending
$G\mapsto(h_{j,m})$ with $h_{j,m}=\Cau_{(0,a_j^{q})}[g_{j,m}]\in
\Ahol(\C\setminus[0,a_j^{q}])$;
\item the \emph{evaluation--recombination operator}
\[
  \Eop:\ (h_{j,m})\longmapsto
  \Bigl[z\mapsto
   \frac1q\sum_{m=0}^{q-1}z^{q-1-m}h_{1,m}(z^{q})
   +\frac1q\sum_{m=0}^{q-1}(e^{-i\theta}z)^{q-1-m}h_{2,m}\bigl((-1)^{p}z^{q}\bigr)
  \Bigr].
\]
\end{enumerate}
\end{definition}

\begin{theorem}[Operator factorization]\label{thm:factorization}
For $(p,q)=1$ and $f$ with $f_1\in L^1(0,1)$, $f_2\in L^1(0,c)$, set
$\varphi_1(z)=z^{q}$, $\varphi_2(z)=(-1)^{p}z^{q}$ and let
$\Sigma_j=\varphi_j^{-1}([0,a_j^{q}])$ be the $q$-ray star on which the $j$-th
side block of $\Eop\Cint\Lq$ is literally defined off. Then
\begin{equation}\label{eq:factor-thm}
  \Cau_{\Gw}=\Eop\,\Cint\,\Lq
  \qquad\text{on } \C\setminus(\Sigma_1\cup\Sigma_2),
\end{equation}
where both sides are holomorphic. Moreover, each side block recombines to a
function holomorphic off a single physical ray---$\Cau_{(0,1)}[f_1]$ off
$[0,1]$ and $\Cau_{(0,c)}[f_2](e^{-i\theta}\,\cdot)$ off the second
side---so the spurious star rays $\Sigma_j\setminus\Gw$ are removable: both
sides of \eqref{eq:factor-thm} extend holomorphically to $\C\setminus\Gw$, and
the identity persists there by analytic continuation.
\end{theorem}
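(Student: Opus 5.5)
The plan is to assemble the identity from Proposition~\ref{prop:reduction} and Lemma~\ref{lem:lift}, side by side, and then deal with the removable star rays by identifying each side block with a function whose holomorphy domain is already known.

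\emph{Step 1 (first side).} Fix $z\notin\Sigma_1$, so $z^{q}\notin[0,1]$. Apply Lemma~\ref{lem:lift} with $a=1$, $g=f_1$, $\xi=z$. The lifted densities $\tilde g_m$ are exactly $g_{1,m}$. Hence the first sum in $\Eop\Cint\Lq F$ equals $\Cau_{(0,1)}[f_1](z)$.

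Integrability of $g_{1,m}$ follows from the substitution $s=r^{q}$:
\[
\int_0^1|g_{1,m}(s)|\,ds=q\int_0^1|f_1(r)|\,r^{m}\,dr\le q\|f_1\|_{L^1}.
\]
Thus $\Cint$ is well defined on $\Lq F$.

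\emph{Step 2 (second side).} Apply Lemma~\ref{lem:lift} with $a=c$, $g=f_2$, $\xi=e^{-i\theta}z$. Since $\theta=p\pi/q$, we have
\[
\xi^{q}=e^{-ip\pi}z^{q}=(-1)^{p}z^{q}=\varphi_2(z).
\]
The hypothesis $\xi^{q}\notin(0,c^{q})$ is precisely $z\notin\Sigma_2$, up to the endpoint set, which is contained in $\Sigma_2$. The polynomial weights become $(e^{-i\theta}z)^{q-1-m}$, so the second sum of $\Eop$ equals $\Cau_{(0,c)}[f_2](e^{-i\theta}z)$. This is the only place rationality is used, and it is what makes $\xi^{q}$ single-valued in $z$. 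Adding Steps 1 and 2 and invoking Proposition~\ref{prop:reduction} gives \eqref{eq:factor-thm} on $\C\setminus(\Sigma_1\cup\Sigma_2)$. Holomorphy of the right side there is clear: each $h_{j,m}$ lies in $\Ahol(\C\setminus[0,a_j^{q}])$ and is composed with the entire map $\varphi_j$ off $\Sigma_j$, then multiplied by polynomials.

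\emph{Step 3 (removability).} This is the conceptual point rather than a computational obstacle. By Steps 1 and 2, the $j$-th block agrees on $\C\setminus\Sigma_j$ with a function holomorphic off a single ray:
\begin{itemize}
\item $\Cau_{(0,1)}[f_1]$ is holomorphic off $[0,1]$, by differentiation under the integral sign for an $L^1$ density;
\item $\Cau_{(0,c)}[f_2](e^{-i\theta}\cdot)$ is holomorphic off $e^{i\theta}[0,c]$.
\end{itemize}
Each of these rays is contained in $\Sigma_j$ and in $\overline{\Gw}$.

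So each block, defined on the open set $\C\setminus\Sigma_j$, coincides there with a function holomorphic on the larger domain $\C\setminus(\text{its ray})$. This coincidence is what defines its holomorphic extension; the spurious rays $\Sigma_j\setminus\Gw$ are removable in this sense. Note that the spurious rays have empty interior, so the complement of the star is dense, and the extension is unique by the identity theorem on each connected component of $\C\setminus\overline{\Gw}$. Since $\C\setminus(\Sigma_1\cup\Sigma_2)$ is dense in every component, the identity persists on $\C\setminus\Gw$.

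One must take care that the two sides of $\Gw$ and the vertex itself are excluded. Points of $\Sigma_j$ lying on the *other* physical side cause no problem for the block whose ray is different.
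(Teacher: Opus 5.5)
Your proposal is correct and follows the paper's proof essentially step for step. You apply Lemma~\ref{lem:lift} with $(a,g,\xi)=(1,f_1,z)$ and $(c,f_2,e^{-i\theta}z)$, use $(e^{-i\theta}z)^{q}=(-1)^{p}z^{q}$, sum the two blocks via Proposition~\ref{prop:reduction}, and get removability because each block agrees off $\Sigma_j$ with a function holomorphic off its closed physical ray; your $L^1$ check of the lifted densities is a useful detail the paper leaves implicit.

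Two small refinements. First, your component argument really yields extension to $\C\setminus\overline{\Gw}$, and that is the correct domain, despite the statement writing $\C\setminus\Gw$: $\Cau_{\Gw}f$ is generally not holomorphic at the vertex or at $1$, $ce^{i\theta}$. Second, rationality is not what makes $\xi^{q}$ single-valued, since $(e^{-i\theta}z)^{q}=e^{-iq\theta}z^{q}$ is single-valued for any $\theta$. It only makes the second evaluation point the real multiple $(-1)^{p}z^{q}$ of the covering variable.
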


\begin{proof}
By Lemma~\ref{lem:lift} with $(a,g,\xi)=(1,f_1,z)$, the first side block of
$\Eop\Cint\Lq F$ equals $\Cau_{(0,1)}[f_1](z)$ wherever $z^{q}\notin[0,1]$, i.e.
on $\C\setminus\Sigma_1$, and the right-hand side $\Cau_{(0,1)}[f_1]$ is in fact
holomorphic on the larger set $\C\setminus[0,1]$. With
$(a,g,\xi)=(c,f_2,e^{-i\theta}z)$ and
$(e^{-i\theta}z)^{q}=e^{-ip\pi}z^{q}=(-1)^{p}z^{q}$, $a^{q}=c^{q}$, the second
side block equals $\Cau_{(0,c)}[f_2](e^{-i\theta}z)$ on $\C\setminus\Sigma_2$,
holomorphic off the second physical ray. On $\C\setminus(\Sigma_1\cup\Sigma_2)$
their sum is \eqref{eq:reduction}, i.e. $\Cau_{\Gw}f$, giving
\eqref{eq:factor-thm}. Since both sides of the blocks recombine to functions
holomorphic off the physical rays, the common value extends holomorphically
across the spurious rays $\Sigma_j\setminus\Gw$ to all of $\C\setminus\Gw$,
where the identity continues to hold by the identity theorem.
\end{proof}

\begin{theorem}[Finite-sheeted decomposition]\label{thm:decomposition}
With $g_{j,m}$ as in \eqref{eq:gjm}, the decomposition \eqref{eq:main} holds, as
written, for $z$ with $z^{q}\notin[0,1]$ and $(-1)^{p}z^{q}\notin[0,c^{q}]$ (so
that each interval transform on the right, defined on $\C\setminus[0,a]$, is
evaluated off its closed cut), i.e., on $\C\setminus(\Sigma_1\cup\Sigma_2)$. Both
sides continue holomorphically to all of $\C\setminus\Gw$---a larger set in
general, since $\Sigma_1\cup\Sigma_2$ contains up to $2q$ lifted star segments
whereas $\Gw$ consists only of the two physical segments---and the identity
persists there by analytic continuation, the nonphysical star segments being
removable as in Theorem~\ref{thm:factorization}. The number of interval
transforms is $2q$, the lifted intervals $(0,1)$ and $(0,c^{q})$ are real and
positive, and the parity of $p$ enters only through the evaluation point
$(-1)^{p}z^{q}$.
\end{theorem}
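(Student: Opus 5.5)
The plan is to read Theorem~\ref{thm:decomposition} as Theorem~\ref{thm:factorization} written out explicitly. Once the operators of Definition~\ref{def:factor-ops} are unwound, most of the claims follow from results already proved.

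\emph{Step 1 (unwinding).} Apply $\Lq$ to $F=(f_1,f_2)$; this gives exactly the densities $g_{j,m}$ of \eqref{eq:gjm}. Next, $\Cint$ maps them to $h_{1,m}=\Cau_{(0,1)}[g_{1,m}]$ and $h_{2,m}=\Cau_{(0,c^q)}[g_{2,m}]$. Finally, $\Eop$ evaluates these at $z^q$ and $(-1)^pz^q$ and recombines them with the weights $z^{q-1-m}$ and $(e^{-i\theta}z)^{q-1-m}$. The result is precisely the right side of \eqref{eq:main}.

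\emph{Step 2 (the identity on the stated set).} The identity \eqref{eq:main} on $\C\setminus(\Sigma_1\cup\Sigma_2)$ is then exactly \eqref{eq:factor-thm}. I would still record the direct route for clarity. Lemma~\ref{lem:lift} with $(a,g,\xi)=(1,f_1,z)$ handles the first block. Lemma~\ref{lem:lift} with $(c,f_2,e^{-i\theta}z)$ handles the second, using
\[
(e^{-i\theta}z)^q=e^{-ip\pi}z^q=(-1)^pz^q
\]
and $\tilde g_m=g_{2,m}$. Summing the two blocks and applying Proposition~\ref{prop:reduction} gives $\Cau_{\Gw}f(z)$.

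\emph{Step 3 (the domain restriction).} Lemma~\ref{lem:lift} only requires $\xi^q\notin(0,a^q)$ for the open interval. The theorem instead excludes the closed interval $[0,a^q]$, which removes the point $0$ as well as the endpoint $a^q$. The reason is that the interval transforms are only asserted holomorphic on $\C\setminus[0,a]$, and the excluded set is then exactly $\Sigma_1\cup\Sigma_2=\varphi_1^{-1}([0,1])\cup\varphi_2^{-1}([0,c^q])$.

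\emph{Step 4 (continuation to $\C\setminus\Gw$).} Each side block equals a function that is holomorphic off its own physical ray. This is $\Cau_{(0,1)}[f_1]$ off $[0,1]$ and $\Cau_{(0,c)}[f_2](e^{-i\theta}\cdot)$ off the second side, both holomorphic by differentiation under the integral. So the right side of \eqref{eq:main} extends holomorphically across the nonphysical star segments, and the identity theorem carries the equality to $\C\setminus\Gw$. One point needs a short check: the identity theorem needs a connected domain. The complement $\C\setminus\Gw$ of two bounded segments is connected, and $\C\setminus(\Sigma_1\cup\Sigma_2)$ is a nonempty open subset of it, since it contains all $|z|$ large. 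So uniqueness of the extension holds.

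\emph{Step 5 (remaining claims).} The count of $2q$ transforms is immediate from the indices $j\in\{1,2\}$ and $m\in\{0,\dots,q-1\}$. The lifted intervals are $(0,1^q)=(0,1)$ and $(0,c^q)$, both real and positive. The integer $p$ appears only through $e^{-ip\pi}=(-1)^p$, that is, only in the evaluation point $(-1)^pz^q$; the weight $e^{-i\theta}$ carries $p$ but is not a parity effect.

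The main obstacle is bookkeeping rather than analysis: making precise that the spurious rays are removable. I would settle this by pointing to the block-wise equality with the reduction formula \eqref{eq:reduction}, which has no branch cut (Remark~\ref{rem:no-cut}).
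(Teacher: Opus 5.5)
Your proposal is correct and follows the paper's argument. The paper proves this theorem in one line, as the componentwise reading of the factorization \eqref{eq:factor-thm} together with its holomorphic extension, which is exactly your Steps 1--4 (Lemma~\ref{lem:lift}, Proposition~\ref{prop:reduction}, and the removability of the spurious star rays from Theorem~\ref{thm:factorization}); your checks on connectedness of $\C\setminus\Gw$ for the identity theorem and the count and parity bookkeeping in Step 5 make explicit what the paper leaves implicit.
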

\begin{proof}
This is the componentwise reading of the factorization \eqref{eq:factor-thm} and
its holomorphic extension.
\end{proof}

The analytic lift of the integration path to the finite-sheeted plane is illustrated in Figure~\ref{fig:analytic_lift}, where the lifted continuous path successfully avoids the branch cut.

\begin{figure}[htbp]
    \centering
    \includegraphics[width=0.95\textwidth]{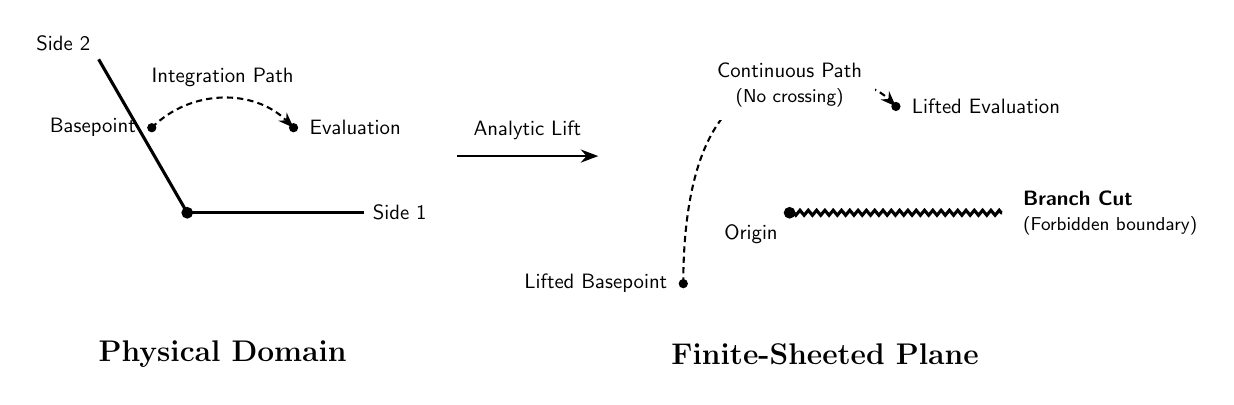}
    \caption{Analytic lift of an integration path from the physical rational wedge $\Gamma_{\theta,c}$ (left) to the finite-sheeted plane (right). Guided by the covering $w=\zeta^q$, the lift transports the basepoint and evaluation point while ensuring the continuous integration path strictly avoids the forbidden boundary of the branch cut.}
    \label{fig:analytic_lift}
\end{figure}

\section{Conormal endpoint spaces and lifted Plemelj theory}
\label{sec:conormal}

\subsection{Conormal weighted H\"older spaces}
The natural endpoint scale is conormal, controlled by the Euler field
$r\partial_r$ rather than $\partial_r$. This is forced by the lift: the
substitution $s=r^{q}$ does not preserve ordinary $C^{m_0,\beta}$ regularity at
the vertex, since $\partial_s[h(s^{1/q})]=\tfrac1q h'(s^{1/q})s^{1/q-1}$ blows
up for $q>1$, whereas $r\partial_r$ transforms cleanly
(Lemma~\ref{lem:transport}). Conormal scales are, in any case, the standard
framework for corner asymptotics.

\begin{definition}[Conormal weighted H\"older spaces]\label{def:weighted}
For $\alpha\in\C$, $m_0\in\N_0$, $0<\beta<1$, $a>0$, let
\[
  C^{m_0,\beta}_{\alpha}(0,a)
  =\bigl\{f(r)=r^{\alpha}h(r):\ (r\partial_r)^{k}h\in C^{\beta}[0,a],\
        0\le k\le m_0\bigr\},
  \quad
  \|f\|_{C^{m_0,\beta}_\alpha}
  =\sum_{k=0}^{m_0}\bigl\|(r\partial_r)^{k}(r^{-\alpha}f)\bigr\|_{C^{\beta}[0,a]} ,
\]
where $\|\phi\|_{C^{\beta}[0,a]}=\|\phi\|_{L^\infty}+[\phi]_{\beta}$ and
$[\phi]_\beta=\sup_{x\neq y}|\phi(x)-\phi(y)|/|x-y|^{\beta}$. The wedge space is
$C^{m_0,\beta}_{\bd\alpha}(\Gw)=C^{m_0,\beta}_{\alpha_1}(0,1)\oplus
C^{m_0,\beta}_{\alpha_2}(0,c)$, with norm
$\|F\|=\|f_1\|_{C^{m_0,\beta}_{\alpha_1}}+\|f_2\|_{C^{m_0,\beta}_{\alpha_2}}$,
acting on the side restrictions \eqref{eq:sides}.
\end{definition}

\begin{assumption}[Vertex integrability]\label{ass:integrable}
$\Real\alpha_j>-1$ for $j=1,2$.
\end{assumption}

This is exactly the condition that \eqref{eq:cauchy-def} converge absolutely at
the vertex, and it is the standing hypothesis below.

\subsection{The conormal mapping theorem}
We first record the elementary H\"older behaviour of the root substitution.

\begin{lemma}[Root composition]\label{lem:root-comp}
Let $\phi\in C^{\beta}[0,a]$ and $\psi(s)=s^{1/q}$ on $[0,a^{q}]$. Then
$\phi\circ\psi\in C^{\beta/q}[0,a^{q}]$ and
\[
  \|\phi\circ\psi\|_{C^{\beta/q}[0,a^{q}]}\le \|\phi\|_{C^{\beta}[0,a]} .
\]
\end{lemma}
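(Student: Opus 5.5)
The plan is to bound the two pieces of the $C^{\beta/q}$ norm separately. Both reduce to one elementary inequality: the root map $\psi(s)=s^{1/q}$ is $1/q$-Hölder on $[0,\infty)$ with constant $1$.

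\emph{Sup-norm term.} Since $\psi$ maps $[0,a^{q}]$ bijectively onto $[0,a]$, we have
\[
\|\phi\circ\psi\|_{L^\infty[0,a^{q}]}=\|\phi\|_{L^\infty[0,a]} .
\]

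\emph{Root inequality.} For $0\le t\le s$ we claim
\[
s^{1/q}-t^{1/q}\le (s-t)^{1/q}.
\]
This is the subadditivity of the concave function $x\mapsto x^{1/q}$ with value $0$ at $0$. Concretely, put $u=s^{1/q}$, $v=t^{1/q}$ and $d=u-v\ge0$. Then
\[
u^{q}=(v+d)^{q}\ge v^{q}+d^{q}
\]
by the binomial theorem, since all terms are nonnegative. Hence $s-t\ge d^{q}$, which is the claim.

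\emph{Hölder seminorm.} For $s\neq t$ in $[0,a^{q}]$ we have $\psi(s)\neq\psi(t)$, so
\[
\frac{|\phi(\psi(s))-\phi(\psi(t))|}{|s-t|^{\beta/q}}
\le [\phi]_\beta\,\frac{|\psi(s)-\psi(t)|^{\beta}}{|s-t|^{\beta/q}}
\le [\phi]_\beta ,
\]
using the root inequality raised to the power $\beta>0$. Taking the supremum over $s\neq t$ gives $[\phi\circ\psi]_{\beta/q}\le[\phi]_\beta$. Adding this to the sup-norm identity gives the stated norm bound with constant exactly $1$.

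The only point needing care is the root inequality, which must hold with constant $1$ uniformly up to $s=0$, where $\psi$ is not Lipschitz. The binomial argument handles this without any case split near the endpoint. The exponent $\beta/q$ cannot be improved, as the example $\phi(r)=r^{\beta}$ shows, though the statement only requires the upper bound.
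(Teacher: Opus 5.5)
Your proof is correct and follows the paper's argument. The sup norm is unchanged because $\psi$ is a bijection of $[0,a^{q}]$ onto $[0,a]$, and the seminorm bound comes from the constant-$1$ estimate $|s^{1/q}-t^{1/q}|\le|s-t|^{1/q}$; your only addition is to actually prove this root inequality via $(v+d)^{q}\ge v^{q}+d^{q}$ (valid since $q$ is a positive integer), which the paper simply asserts.
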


\begin{proof}
The supremum is unchanged: $\|\phi\circ\psi\|_{L^\infty[0,a^{q}]}
=\|\phi\|_{L^\infty[0,a]}$. For the seminorm, $t\mapsto t^{1/q}$ is
$\tfrac1q$-H\"older with constant $1$ on $[0,\infty)$, i.e.
$|s_1^{1/q}-s_2^{1/q}|\le|s_1-s_2|^{1/q}$; hence
$|\phi(s_1^{1/q})-\phi(s_2^{1/q})|\le[\phi]_{\beta}|s_1^{1/q}-s_2^{1/q}|^{\beta}
\le[\phi]_{\beta}|s_1-s_2|^{\beta/q}$, so
$[\phi\circ\psi]_{\beta/q}\le[\phi]_{\beta}$. Summing the two bounds gives the
claim.
\end{proof}

\begin{lemma}[Conormal exponent transport]\label{lem:transport}
Let $f_j(r)=r^{\alpha_j}h_j(r)\in C^{m_0,\beta}_{\alpha_j}(0,a_j)$, $a_1=1$,
$a_2=c$, under Assumption~\ref{ass:integrable}. Then for $m=0,\dots,q-1$,
\begin{equation}\label{eq:transport}
  g_{j,m}(s)=s^{\beta_{j,m}}H_{j}(s),\qquad
  H_{j}(s)=h_j\!\bigl(s^{1/q}\bigr),\qquad
  \beta_{j,m}:=\frac{\alpha_j+m+1}{q}-1 ,
\end{equation}
with the exact conormal transformation rule
\begin{equation}\label{eq:conormal-rule}
  (s\partial_s)^{k}H_j(s)
  =q^{-k}\,\bigl[(r\partial_r)^{k}h_j\bigr]\!\bigl(s^{1/q}\bigr),
  \qquad 0\le k\le m_0 .
\end{equation}
Consequently $g_{j,m}\in C^{m_0,\beta/q}_{\beta_{j,m}}(0,a_j^{q})$ and
$\Real\beta_{j,m}>-1$ for all $m\ge0$.
\end{lemma}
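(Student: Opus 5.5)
The proof is a direct computation in three steps: an exponent bookkeeping identity, a chain rule for the Euler field, and a Hölder estimate obtained from Lemma~\ref{lem:root-comp}.

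\emph{Step 1: the exponent.} Start from $f_j(s^{1/q})=s^{\alpha_j/q}h_j(s^{1/q})$, where $s^{1/q}$ is the positive root and $r^{\alpha_j}$ is the principal power on $(0,\infty)$. Multiplying by $s^{(m+1)/q-1}$ gives
\[
  g_{j,m}(s)=s^{\alpha_j/q+(m+1)/q-1}H_j(s)=s^{\beta_{j,m}}H_j(s),
\]
which is \eqref{eq:transport}. The identity $(s^{1/q})^{\alpha_j}=s^{\alpha_j/q}$ holds because both sides are principal powers of positive reals. Taking real parts, $\Real\beta_{j,m}=(\Real\alpha_j+m+1)/q-1>(m)/q-1\ge 1/q-1-1/q\cdot 0-\,$; more cleanly, $\Real\alpha_j+m+1>m\ge0$. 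Hence $\Real\beta_{j,m}>-1$ for every $m\ge0$.

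\emph{Step 2: the conormal rule \eqref{eq:conormal-rule}.} I will argue by induction on $k$. The base case is the chain rule: with $r=s^{1/q}$,
\[
  s\partial_s[\phi(s^{1/q})]=s\cdot\tfrac1q s^{1/q-1}\phi'(s^{1/q})=\tfrac1q[(r\partial_r)\phi](s^{1/q}).
\]
That is, $s\partial_s\circ\psi^*=\tfrac1q\,\psi^*\circ r\partial_r$, where $\psi^*$ denotes pullback by $\psi(s)=s^{1/q}$. Applying this $k$ times with $\phi=(r\partial_r)^{k-1}h_j$ gives \eqref{eq:conormal-rule}. These derivatives exist on the open interval $(0,a_j^q)$ because the hypothesis $(r\partial_r)^kh_j\in C^\beta[0,a_j]$ presupposes that they exist on $(0,a_j)$. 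Since $\psi$ is a diffeomorphism of $(0,a_j^q)$ onto $(0,a_j)$, the open interval is all that is needed, and the endpoints enter only through continuity.

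\emph{Step 3: membership and the H\"older loss.} Apply Lemma~\ref{lem:root-comp} to $\phi=(r\partial_r)^kh_j\in C^\beta[0,a_j]$. This gives $(s\partial_s)^kH_j=q^{-k}\phi\circ\psi\in C^{\beta/q}[0,a_j^q]$ with norm at most $q^{-k}\|(r\partial_r)^kh_j\|_{C^\beta}$. Summing over $0\le k\le m_0$ shows $s^{-\beta_{j,m}}g_{j,m}=H_j$ satisfies the defining conditions of Definition~\ref{def:weighted}. Therefore $g_{j,m}\in C^{m_0,\beta/q}_{\beta_{j,m}}(0,a_j^q)$, with the bound $\|g_{j,m}\|\le\|f_j\|_{C^{m_0,\beta}_{\alpha_j}}$.

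I expect no step to be a real obstacle. The only point requiring care is Step~2: the chain rule must be applied only on the open interval, with boundary regularity transferred through the closed-interval Hölder statement of Lemma~\ref{lem:root-comp}, and branches must be handled consistently by working only with positive reals.
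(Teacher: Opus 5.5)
Your proposal is correct and follows the paper's proof essentially step for step. You get the factorization \eqref{eq:transport} by direct exponent bookkeeping, iterate the $k=1$ chain rule $s\partial_s[\phi(s^{1/q})]=\tfrac1q[(r\partial_r)\phi](s^{1/q})$ by induction, and obtain membership from Lemma~\ref{lem:root-comp} applied to $(r\partial_r)^k h_j$; just delete the garbled inequality chain in Step~1, since the clean observation $\Real\alpha_j+m+1>m\ge0$ already gives $\Real\beta_{j,m}>-1$.
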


\begin{proof}
Factorization \eqref{eq:transport} is immediate from \eqref{eq:gjm}. For
\eqref{eq:conormal-rule}, the case $k=1$ is the chain rule,
$s\partial_s H_j=\tfrac1q s^{1/q}h_j'(s^{1/q})=\tfrac1q[(r\partial_r)h_j](s^{1/q})$,
and induction applies the $k=1$ rule to $(r\partial_r)^{k}h_j$. Since
$(r\partial_r)^{k}h_j\in C^{\beta}[0,a_j]$, Lemma~\ref{lem:root-comp} gives
$(s\partial_s)^{k}H_j\in C^{\beta/q}[0,a_j^{q}]$, whence the membership. Finally
$\Real\beta_{j,m}=(\Real\alpha_j+m+1)/q-1>-1$ iff $\Real\alpha_j+m+1>0$, which
holds for all $m\ge0$ under Assumption~\ref{ass:integrable}.
\end{proof}

\begin{theorem}[Conormal mapping theorem]\label{thm:Lq-bounded}
Under Assumption~\ref{ass:integrable}, the lifting operator
\[
  \Lq:\ C^{m_0,\beta}_{\bd\alpha}(\Gw)
  \longrightarrow
  \bigoplus_{j=1}^{2}\bigoplus_{m=0}^{q-1}
     C^{m_0,\beta/q}_{\beta_{j,m}}(0,a_j^{q})
\]
is bounded, with the explicit estimate, independent of $\alpha,\beta,a,m_0$,
\begin{equation}\label{eq:Lq-bound}
  \|\Lq F\|
  =\sum_{j=1}^{2}\sum_{m=0}^{q-1}\|g_{j,m}\|_{C^{m_0,\beta/q}_{\beta_{j,m}}}
  \le q\,\|F\| .
\end{equation}
The sheet-count bound $q$ is sharp: $\|\Lq\|=q$.
More precisely, for each fixed $j$ all $q$ components have equal norm,
\begin{align}\label{eq:Lq-component}
\begin{aligned}
  \|g_{j,m}\|_{C^{m_0,\beta/q}_{\beta_{j,m}}}
  =\sum_{k=0}^{m_0}\bigl\|(s\partial_s)^{k}H_j\bigr\|_{C^{\beta/q}}
  =&\sum_{k=0}^{m_0}q^{-k}\bigl\|[(r\partial_r)^{k}h_j]\circ(\cdot)^{1/q}\bigr\|_{C^{\beta/q}} \\
  \le&\sum_{k=0}^{m_0}q^{-k}\bigl\|(r\partial_r)^{k}h_j\bigr\|_{C^{\beta}}
  \le\|f_j\|_{C^{m_0,\beta}_{\alpha_j}} .
\end{aligned}
\end{align}
\end{theorem}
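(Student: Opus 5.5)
The plan is to prove the componentwise identity \eqref{eq:Lq-component} first, then sum it to get \eqref{eq:Lq-bound}, and finally exhibit a density attaining equality to show sharpness.

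\emph{Componentwise estimate.} By Lemma~\ref{lem:transport}, each lifted density factors exactly as $g_{j,m}=s^{\beta_{j,m}}H_j$ with $H_j=h_j\circ(\cdot)^{1/q}$. The point is that $H_j$ does not depend on $m$: the weight $s^{(m+1)/q-1}$ is absorbed entirely into the conormal exponent $\beta_{j,m}$. Hence, by Definition~\ref{def:weighted},
\[
\|g_{j,m}\|_{C^{m_0,\beta/q}_{\beta_{j,m}}}=\sum_{k=0}^{m_0}\|(s\partial_s)^kH_j\|_{C^{\beta/q}[0,a_j^q]},
\]
which is the first equality in \eqref{eq:Lq-component}. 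In particular the $q$ components with fixed $j$ have equal norm.

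Next I apply the exact transformation rule \eqref{eq:conormal-rule}, $(s\partial_s)^kH_j=q^{-k}[(r\partial_r)^kh_j]\circ(\cdot)^{1/q}$, which gives the second equality. Lemma~\ref{lem:root-comp}, applied to $\phi=(r\partial_r)^kh_j\in C^\beta[0,a_j]$, gives the first inequality. Finally $q^{-k}\le1$ together with $h_j=r^{-\alpha_j}f_j$ bounds the sum by $\|f_j\|_{C^{m_0,\beta}_{\alpha_j}}$. All constants here are $1$ or $q^{-k}$, so the bound is independent of $\alpha,\beta,a,m_0$. Assumption~\ref{ass:integrable} enters only to guarantee $\Real\beta_{j,m}>-1$, so that the target spaces consist of integrable densities.

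\emph{Summation.} Summing over $m=0,\dots,q-1$ gives $\sum_m\|g_{j,m}\|\le q\|f_j\|$. Summing over $j=1,2$ then gives $\|\Lq F\|\le q\|F\|$, which is \eqref{eq:Lq-bound} and proves boundedness.

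\emph{Sharpness.} The one step needing care is checking that equality is actually attained. I take $F=(f_1,0)$ with $f_1(r)=r^{\alpha_1}$, so $h_1\equiv1$ and $\|F\|=\|1\|_{C^\beta}=1$, since the $k\ge1$ terms and the seminorm vanish. Then $H_1\equiv1$, so every $g_{1,m}=s^{\beta_{1,m}}$ has norm $\|1\|_{C^{\beta/q}}=1$, while $g_{2,m}=0$. This gives $\|\Lq F\|=q=q\|F\|$, hence $\|\Lq\|\ge q$, and with the upper bound, $\|\Lq\|=q$.

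No real obstacle is expected. The only points to check are that the loss factors $q^{-k}$ and the root-composition constant can never produce a strict improvement over $q$ uniformly, which the constant density rules out, and that the norm in the target uses exactly the exponent $\beta_{j,m}$, so that $s^{-\beta_{j,m}}g_{j,m}=H_j$ holds identically.
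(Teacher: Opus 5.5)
Your proposal is correct and follows the paper's proof essentially step for step. The $m$-independence of $H_j=s^{-\beta_{j,m}}g_{j,m}$ gives the first equality, the conormal transformation rule and the root-composition lemma together with $q^{-k}\le 1$ give the remaining steps, summing over $m$ and $j$ gives $\|\Lq F\|\le q\|F\|$, and the pure power $F=(r^{\alpha_1},0)$ attains equality.
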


\begin{proof}
By \eqref{eq:transport} the conormal-reduced function of $g_{j,m}$ is
$s^{-\beta_{j,m}}g_{j,m}=H_j$, independent of $m$; hence the first equality in
\eqref{eq:Lq-component}. The second uses \eqref{eq:conormal-rule}, and the third
is Lemma~\ref{lem:root-comp} applied to $\phi=(r\partial_r)^{k}h_j$. Since
$q^{-k}\le1$, the sum is at most $\sum_k\|(r\partial_r)^{k}h_j\|_{C^{\beta}}
=\|f_j\|_{C^{m_0,\beta}_{\alpha_j}}$. Summing \eqref{eq:Lq-component} over the
$q$ values of $m$ and over $j=1,2$ gives
$\|\Lq F\|\le q(\|f_1\|+\|f_2\|)=q\|F\|$. Sharpness: take $F=(f_1,0)$ with
$f_1(r)=r^{\alpha_1}$, so $h_1\equiv1$ and $\|F\|=\|h_1\|_{C^{\beta}}=1$. Then
$H_1\equiv1$, every conormal derivative $(s\partial_s)^{k}H_1$ vanishes for
$k\ge1$, and $\|g_{1,m}\|=\|H_1\|_{C^{\beta/q}}=1$ for each $m$; hence
$\|\Lq F\|=q=q\|F\|$, so $\|\Lq\|=q$.
\end{proof}

\begin{remark}
The operator norm is exactly $q$ and is independent of $\beta$, $a$, and
$\alpha$: the only cost of resolving the corner is the $q$-fold multiplicity of
sheets. The H\"older exponent degrades from $\beta$ to $\beta/q$, the
unavoidable price of the root substitution, while the conormal order $m_0$ is
preserved exactly.
\end{remark}

\subsection{Interval Cauchy mapping and boundary values}
After the lift one needs the mapping properties of the interval Cauchy operator.
We record a clean local version sufficient for the calculus; the strongest
singular-integral statements are not needed.

\begin{proposition}[Interval Cauchy mapping]\label{prop:interval-mapping}
Let $\mu\in\C$ with $\Real\mu>-1$, $a>0$, and $g\in C^{0,\beta}_{\mu}(0,a)$.
\begin{enumerate}
\item $\Cau_{(0,a)}[g]\in\Ahol(\C\setminus[0,a])$.
\item \textup{(Separated bound.)} For compact $K\subset\C$ with
$d=\dist(K,[0,a])>0$,
\[
  \bigl\|\Cau_{(0,a)}[g]\bigr\|_{L^\infty(K)}
  \le\frac{1}{2\pi d}\,\|g\|_{L^1(0,a)}
  \le\frac{1}{2\pi d}\,\frac{a^{\Real\mu+1}}{\Real\mu+1}\,
     \|g\|_{C^{0,\beta}_{\mu}} .
\]
\item \textup{(Endpoint asymptotics.)} The behaviour of $\Cau_{(0,a)}[g](\xi)$
as $\xi\to0$ off $[0,a]$ is given by the polyhomogeneous model expansion
developed in \S\ref{sec:propagation}, with leading term governed by the endpoint
exponent $\mu$; this is recorded here only as a pointer and is not used in the
proof of the present proposition.
\item \textup{(Plemelj boundary values.)} On every compact subinterval
$K\Subset(0,a)$ the non-tangential limits $\Cau_{(0,a)}^{\pm}[g]$ exist, are
H\"older-$\beta$, and satisfy
$\Cau_{(0,a)}^{+}[g]-\Cau_{(0,a)}^{-}[g]=g$ on $K$
\textup{\cite{muskhelishvili2008singular,gakhov2014boundary}}.
\end{enumerate}
\end{proposition}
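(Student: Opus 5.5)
Items (1) and (2) follow from the weighted structure of $g$, so I will prove them first. Write $g(r)=r^{\mu}h(r)$ with $h\in C^{\beta}[0,a]$. Then $|g(r)|\le r^{\Real\mu}\|h\|_{L^\infty}$, and since $\Real\mu>-1$,
\[
  \|g\|_{L^1(0,a)}\le\|h\|_{L^\infty}\int_0^a r^{\Real\mu}\,dr=\frac{a^{\Real\mu+1}}{\Real\mu+1}\,\|h\|_{L^\infty}\le\frac{a^{\Real\mu+1}}{\Real\mu+1}\,\|g\|_{C^{0,\beta}_\mu}.
\]
This is the second inequality of (2).

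For (1), I use the standard argument for parameter integrals. On any compact $K\subset\C\setminus[0,a]$ the integrand $g(r)(r-\xi)^{-1}$ is holomorphic in $\xi$. It is also dominated by $|g(r)|/d$, where $d=\dist(K,[0,a])>0$, and $g\in L^1$. Continuity in $\xi$ then follows from dominated convergence, and holomorphy from Morera together with Fubini, or equivalently by differentiating under the integral sign. The first inequality of (2) is immediate: $|r-\xi|\ge d$ for $\xi\in K$, so $|\Cau_{(0,a)}[g](\xi)|\le(2\pi d)^{-1}\|g\|_{L^1}$.

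Item (3) is explicitly only a pointer to \S\ref{sec:propagation}, so there is nothing to prove.

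For (4) I localize away from the endpoint $0$, which is the only place the weight matters. Fix $K=[x_0,x_1]\Subset(0,a)$ and take a cutoff $\chi\in C^\infty_c(0,a)$ with $\chi\equiv1$ on a neighbourhood $[x_0-\delta,x_1+\delta]$ of $K$. Split $g=\chi g+(1-\chi)g$.
\begin{itemize}
\item The piece $(1-\chi)g$ vanishes near $K$. Its Cauchy transform is therefore holomorphic across $(x_0-\delta,x_1+\delta)$, so it has equal one-sided limits, which are smooth on $K$, and it contributes nothing to the jump.
\item The piece $\chi g=\chi r^{\mu}h$ is $C^{\beta}$ on $[0,a]$ and vanishes near both endpoints, because $r^{\mu}$ is smooth on $\operatorname{supp}\chi\subset(0,a)$ and products of H\"older functions are H\"older. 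The classical Plemelj--Privalov theorem \cite{muskhelishvili2008singular,gakhov2014boundary} then gives non-tangential limits of $\Cau[\chi g]$ that are $C^{\beta}$ on the interior, with jump $\chi g=g$ on $K$.
\end{itemize}
Adding the two pieces gives existence of the limits, their $C^\beta$ regularity on $K$, and the jump relation $\Cau_{(0,a)}^{+}[g]-\Cau_{(0,a)}^{-}[g]=g$ there.

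The main obstacle is this last step. One has to check that the classical theorem applies to a H\"older density with compact support in $(0,a)$ and yields $C^\beta$ boundary values on $K$ with the correct sign convention. Here $+$ is the left side of the rightward-oriented interval, i.e.\ the upper half-plane, and the jump is $+g$ with the $\frac{1}{2\pi i}$ normalization of \eqref{eq:interval}. If the reference version requires $\beta<1$ and a closed contour, I will instead verify it directly. I write
\[
  \Cau[\chi g](\xi)=\frac{1}{2\pi i}\int\frac{\chi g(r)-\chi g(x)}{r-\xi}\,dr+\chi g(x)\,\Cau[1](\xi).
\]
The first integral converges uniformly as $\xi\to x$ non-tangentially, by the H\"older bound $|\chi g(r)-\chi g(x)|\lesssim|r-x|^\beta$. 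The second term is explicit, since $\Cau[1](\xi)=\frac{1}{2\pi i}\log\frac{a-\xi}{-\xi}$. Its boundary values differ by exactly $1$ across $(0,a)$, and this produces the jump.
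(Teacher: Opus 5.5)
Your proposal is correct and follows the paper's proof. Items (1)--(2) go through differentiation under the integral, the bound $|r-\xi|\ge d$, and $|g(r)|\le r^{\Real\mu}\|h\|_{L^\infty}$, and item (4) through the classical Plemelj--Privalov theorem away from the endpoint $0$. Your cutoff splitting $g=\chi g+(1-\chi)g$ just makes explicit the localization the paper compresses into ``applied on a compact subinterval, where $g\in C^{\beta}$.'' Your fallback direct computation gives existence of the limits and the jump but not, by itself, the $C^{\beta}$ regularity of the boundary values; that route is not needed, since $0<\beta<1$ is built into the definition of $C^{0,\beta}_{\mu}$ and the classical theorem applies.
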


\begin{proof}
(1) is standard differentiation under the integral for $\xi$ off $[0,a]$. For
(2), $|r-\xi|\ge d$ on $[0,a]$ gives
$|\Cau_{(0,a)}[g](\xi)|\le\frac{1}{2\pi}\int_0^a|g|/|r-\xi|\,dr
\le\frac{1}{2\pi d}\|g\|_{L^1}$; writing $g=r^{\mu}\tilde h$ with
$\|\tilde h\|_{L^\infty}\le\|g\|_{C^{0,\beta}_{\mu}}$,
$\|g\|_{L^1}\le\|\tilde h\|_{L^\infty}\int_0^a r^{\Real\mu}dr
=\|\tilde h\|_{L^\infty}a^{\Real\mu+1}/(\Real\mu+1)$, finite under
$\Real\mu>-1$. Item (3) is a pointer to \S\ref{sec:propagation}
(Lemma~\ref{lem:polyhom-model}) and requires nothing here. (4) is the classical
Plemelj--Privalov theorem applied on a compact subinterval, where
$g\in C^{\beta}$.
\end{proof}

\begin{corollary}[Mapping of the wedge transform]\label{cor:wedge-mapping}
Under Assumption~\ref{ass:integrable}, for $f\in C^{m_0,\beta}_{\bd\alpha}(\Gw)$
the transform $\Cau_{\Gw}f=\Eop\Cint\Lq f$ lies in $\Ahol(\C\setminus\Gw)$,
satisfies the separated bound on any compact $K$ with $\dist(K,\Gw)>0$ (with
constant controlled by $\|F\|$ through \eqref{eq:Lq-bound} and the polynomial
factors of $\Eop$), and has the boundary values and jump described in
Theorem~\ref{thm:plemelj}.
\end{corollary}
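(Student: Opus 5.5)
The corollary has three parts: holomorphy off $\Gw$, a separated bound on compacts away from $\Gw$, and the boundary values. I would prove them in that order. For holomorphy, I would use Proposition~\ref{prop:reduction} directly rather than the lifted form. Under Assumption~\ref{ass:integrable} we have $f_j=r^{\alpha_j}h_j$ with $h_j$ bounded, so $f_j\in L^1$. Proposition~\ref{prop:interval-mapping}(1) then makes $\Cau_{(0,1)}[f_1]$ holomorphic off $[0,1]$ and $\Cau_{(0,c)}[f_2](e^{-i\theta}\cdot)$ holomorphic off the second closed side. The two closed sides meet only at the vertex, and the vertex is excluded from $\C\setminus\Gw$ only if $\Gw$ is read as including it. So I would read $\Ahol(\C\setminus\Gw)$ as holomorphy off the closed wedge $\overline{\Gw}$. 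Theorem~\ref{thm:factorization} then identifies this function with $\Eop\Cint\Lq f$ on all of $\C\setminus\Gw$.

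For the separated bound, the simplest route again avoids the lift. With $d=\dist(K,\Gw)$, each side term is bounded by $\frac{1}{2\pi d}\|f_j\|_{L^1}\le \frac{1}{2\pi d}\frac{a_j^{\Real\alpha_j+1}}{\Real\alpha_j+1}\|F\|$, using Proposition~\ref{prop:interval-mapping}(2) with $\mu=\alpha_j$. Rotating by $e^{-i\theta}$ preserves distance to the rotated side.

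The statement, however, asks that the constant be controlled "through \eqref{eq:Lq-bound} and the polynomial factors of $\Eop$". To match this, I would also give the lifted estimate, with the following steps.
\begin{itemize}
\item Bound $|z^{q-1-m}|\le R^{q-1-m}$ for $K\subset\{|z|\le R\}$.
\item Apply Proposition~\ref{prop:interval-mapping}(2) to each $g_{j,m}\in C^{0,\beta/q}_{\beta_{j,m}}$, which is valid since Lemma~\ref{lem:transport} gives $\Real\beta_{j,m}>-1$.
\item Sum the resulting bounds and use \eqref{eq:Lq-bound}.
\end{itemize}
The difficulty is that $\dist(\varphi_j(K),[0,a_j^q])$ can vanish when $K$ meets a spurious star ray. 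The fix is the maximum principle. Since the blocks are holomorphic across the spurious rays, I would cover $K$ by a compact neighbourhood $K'$ with $\dist(K',\Gw)\ge d/2$. On the part of $K'$ near a spurious ray, I would bound the block through the unlifted representation; alternatively, I would estimate on small circles around each point that avoid $\Sigma_j$ and apply the maximum principle. I expect this step to be the main obstacle. My first attempt would be simply to state the constant as depending on $d$, $R$, $q$ and $\alpha$, using the direct $L^1$ argument, and to mention the lifted form for compacts disjoint from $\Sigma_1\cup\Sigma_2$.

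For the boundary values, I would defer to Theorem~\ref{thm:plemelj} as the statement does. Locally, on a compact subarc of the open first side, Proposition~\ref{prop:interval-mapping}(4) gives the $\pm$ limits of $\Cau_{(0,1)}[f_1]$ with jump $f_1$, since $f_1\in C^\beta$ away from $0$. The second term is holomorphic near that subarc. The symmetric argument, after rotation, handles the second side.
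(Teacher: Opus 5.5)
Your proposal is correct. For the separated bound it takes a genuinely different route from the paper, and that route is more robust.

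The paper composes factor bounds: $\Lq$ by Theorem~\ref{thm:Lq-bounded}, $\Cint$ by Proposition~\ref{prop:interval-mapping}(2), and $\Eop$ by boundedness on $K$ of the weights $z^{q-1-m}$ and of the maps $z\mapsto z^q$, $z\mapsto(-1)^pz^q$. You instead estimate directly from the rotation reduction. This gives $|\Cau_{\Gw}f|\le\frac{1}{2\pi d}\max_j\frac{a_j^{\Real\alpha_j+1}}{\Real\alpha_j+1}\,\|F\|$ on every compact $K$ with $\dist(K,\Gw)\ge d>0$.

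The obstacle you flagged is real, and the paper's composition passes over it. Applying Proposition~\ref{prop:interval-mapping}(2) to the $(j,m)$ block requires $\dist(\varphi_j(K),[0,a_j^q])>0$, that is, $K$ must be disjoint from the star $\Sigma_j$. This does not follow from $\dist(K,\Gw)>0$.

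For a concrete case, take $q=2$, $\theta=\pi/2$, $f_1\equiv1$, $f_2\equiv0$, and let $K$ be the closed disc of radius $\tfrac12$ about $-1$. Both sheet blocks $h_{1,m}(z^2)=\Cau_{(0,1)}[g_{1,m}](z^2)$ carry the logarithmic singularity $\frac{1}{2\pi i}\log(1-z^2)$, modulo bounded terms, at $z=-1$. It cancels only in the recombination $\tfrac12[z\,h_{1,0}(z^2)+h_{1,1}(z^2)]=\Cau_{(0,1)}[1](z)$.

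So the factorized bound, which is what routes the constant through \eqref{eq:Lq-bound} and the weights of $\Eop$, holds only on compacts at positive distance from $\Sigma_1\cup\Sigma_2$. Your unlifted estimate covers every admissible $K$, at the price of bypassing $\|\Lq\|\le q$. Your fallback is therefore the right choice.

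The maximum-principle patch as you sketched it would not close the gap. A small circle about a point of a spurious ray always meets that ray. A closed curve that avoids the connected star $\Sigma_j$ entirely must enclose all of it, including the physical side $j$, where the block is not holomorphic. No per-sheet estimate can replace the cancellation between sheets near the spurious endpoints.

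Your reading of $\Ahol(\C\setminus\Gw)$ as holomorphy off the closed wedge is also correct. The transform is generally singular at $0$, $1$ and $ce^{i\theta}$, for example through the $\Log(-z)$ term when $f_1\equiv1$. Your holomorphy and jump arguments agree with the paper's, which rest on Theorems~\ref{thm:factorization} and~\ref{thm:plemelj}.
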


\begin{proof}
Each factor in \eqref{eq:factor-thm} is bounded on the relevant space:
$\Lq$ by Theorem~\ref{thm:Lq-bounded}, $\Cint$ by
Proposition~\ref{prop:interval-mapping}, and $\Eop$ by composition with the
polynomial weights $z^{q-1-m}$, $(e^{-i\theta}z)^{q-1-m}$ and the maps
$z\mapsto z^{q}$, $z\mapsto(-1)^{p}z^{q}$, all bounded on compact $K$. The
boundary behaviour is Theorem~\ref{thm:plemelj}.
\end{proof}

\subsection{Plemelj jump relations}

\begin{theorem}[Boundary values and jump relations]\label{thm:plemelj}
Let $f\in C^{m_0,\beta}_{\bd\alpha}(\Gw)$ satisfy
Assumption~\ref{ass:integrable}. Then $\Cau_{\Gw}f$ has non-tangential boundary
values $\Cau_{\Gw}^{\pm}f$ from either side of each open edge, continuous up to
the open edges, and away from the vertex
\begin{equation}\label{eq:jump}
  \Cau_{\Gw}^{+}f(\zeta)-\Cau_{\Gw}^{-}f(\zeta)=f(\zeta),
  \qquad \zeta\in\Gw\setminus\{0,1,ce^{i\theta}\}.
\end{equation}
The vertex asymptotics of $\Cau_{\Gw}^{\pm}f$ follow from
Theorem~\ref{thm:propagation} by passing to boundary values of the sectorwise
powers.
\end{theorem}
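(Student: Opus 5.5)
Plan: reduce everything to the two interval transforms via the rotation reduction, Proposition~\ref{prop:reduction}. For $z$ near a point $\zeta_0$ in the interior of the first edge, $\Cau_{\Gw}f(z)=\Cau_{(0,1)}[f_1](z)+\Cau_{(0,c)}[f_2](e^{-i\theta}z)$. The second term is holomorphic in a neighbourhood of $\zeta_0$: since $0<\theta<2\pi$, the rotated point $e^{-i\theta}\zeta_0$ lies off $[0,c]$. It therefore contributes no jump, and its one-sided limits agree and are continuous. The first term is an interval Cauchy transform of $f_1$. On any compact $K\Subset(0,1)$ the weight $r^{\alpha_1}$ is smooth and nonvanishing, so $f_1=r^{\alpha_1}h_1\in C^{\beta}(K')$ for a slightly larger compact $K'$. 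Proposition~\ref{prop:interval-mapping}(4) then gives non-tangential limits from the left and right, Hölder-$\beta$ on $K$, with jump $f_1$.

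The second edge is handled the same way after the substitution $z\mapsto e^{-i\theta}z$. The only extra check is orientation. The edge is oriented away from $0$, and the rotation preserves orientation, so the $+$ side (left of the direction of travel) of the physical edge maps to the $+$ side (upper half-plane) of $(0,c)$. The jump is therefore $f_2(r)=f(re^{i\theta})$. The first term is now the one holomorphic near the edge point, because $\theta\ne0$ modulo $2\pi$.

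To get continuity up to every open edge rather than only on compact subintervals, exhaust each open edge by compact subintervals $K$. This is legitimate because the Plemelj statement is local: it only needs Hölder regularity of the density near $K$, together with an integrable tail. The remaining part of the density, supported away from $K$, gives a function holomorphic across $K$. This localization is the main point needing care. I would make it explicit with a smooth cutoff $\chi$ equal to $1$ near $K$. Write $f_1=\chi f_1+(1-\chi)f_1$. The piece $\chi f_1$ is genuinely $C^\beta$ on $[0,1]$ with compact support in $(0,1)$, so classical Plemelj--Privalov applies. The piece $(1-\chi)f_1$ is only $L^1$, which suffices because its transform is analytic near $K$. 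This also shows the endpoints $0,1,ce^{i\theta}$ are excluded only because Hölder control of the jump formula is not claimed there.

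Finally, the vertex statement is a pointer rather than a claim requiring proof here. I would note that the boundary values near $0$ are obtained by taking limits of the sectorwise expansions of Theorem~\ref{thm:propagation} along each edge, with the appropriate branch of $\log z$ and $z^{\alpha}$ on each side. This is deferred to that theorem.
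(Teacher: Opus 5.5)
Your proposal is correct and follows the paper's proof. It uses the rotation reduction, the holomorphy of the cross-side term near each open edge, and Proposition~\ref{prop:interval-mapping}(4) on compact subintervals, and it defers the vertex claim to Theorem~\ref{thm:propagation}; your explicit cutoff localization and orientation check only spell out what the paper leaves implicit when it cites the interval Plemelj theorem and says $e^{-i\theta}z$ crosses $(0,c)$ transversally.
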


\begin{proof}
Use the rotation reduction \eqref{eq:reduction}. On a compact subinterval of the
first side, the second term $\Cau_{(0,c)}[f_2](e^{-i\theta}z)$ is holomorphic
(its argument stays off $(0,c)$), and
Proposition~\ref{prop:interval-mapping}(4) gives
$\Cau_{(0,1)}^{+}[f_1]-\Cau_{(0,1)}^{-}[f_1]=f_1=f$ there. On a compact
subinterval of the second side, the first term is holomorphic and
$e^{-i\theta}z$ crosses $(0,c)$ transversally, giving the jump $f_2=f$. This is
\eqref{eq:jump}; the boundary values exist and are H\"older by the same interval
theorem. The finite-sheeted form \eqref{eq:main} is not needed for the
open-edge jump; it governs the endpoint behaviour, treated in
\S\ref{sec:propagation}.
\end{proof}

\section{Mellin asymptotics and polyhomogeneous corner-mode propagation}
\label{sec:propagation}

\subsection{Sectors and branch convention}
The polyhomogeneous expansions established here are the transform-side
counterpart of the classical corner asymptotics for elliptic problems
\cite{grisvard2011elliptic,kozlov1997elliptic,dauge2006elliptic,costabel2003asymptotics,schulze1990mellin}: the densities carry expansions in
powers and logarithms of the distance to the vertex, and we determine how the
Cauchy operator propagates them. The decomposition \eqref{eq:main} is
branch-free, but the vertex expansion below involves non-integer powers and
logarithms, which require a branch. The two rays of $\Gw$ divide a punctured
neighbourhood of $0$ into the open sectors
\[
  \Sigma_{\mathrm I}=\{\,0<\arg z<\theta\,\},\qquad
  \Sigma_{\mathrm{II}}=\{\,\theta<\arg z<2\pi\,\}.
\]
On each sector we use the determination of $\arg z$ from these ranges and set
$z^{\alpha}=|z|^{\alpha}e^{i\alpha\arg z}$, $\log z=\ln|z|+i\arg z$. For the
model interval transform we use the principal cut: $(-\xi)^{\alpha}
=\exp(\alpha\Log(-\xi))$, where $\Log$ is the principal logarithm on
$\C\setminus(-\infty,0]$, so $(-\xi)^{\alpha}$ is cut along $\xi\in(0,\infty)$
and positive for $\xi<0$. All expansions are understood separately on
$\Sigma_{\mathrm I}$ and $\Sigma_{\mathrm{II}}$, with sector-dependent
coefficients; we write a generic sector label $\bullet\in\{\mathrm I,\mathrm{II}\}$.

\subsection{The Mellin model: pure powers}

\begin{lemma}[Model interval expansion]\label{lem:model}
Let $\alpha\in\C$, $\Real\alpha>-1$, $\alpha\notin\Z_{\ge0}$, $a>0$. Then, as
$\xi\to0$ off $[0,a]$,
\begin{equation}\label{eq:model}
  \Cau_{(0,a)}[s^{\alpha}](\xi)
  =m(\alpha)\,(-\xi)^{\alpha}
   +\frac{1}{2\pi i}\sum_{n\ge0}\frac{a^{\alpha-n}}{\alpha-n}\,\xi^{n},
   \qquad m(\alpha)=\frac{1}{2i\sin\pi(\alpha+1)},
\end{equation}
as an asymptotic expansion. For fixed $\xi\notin[0,a]$ the left side is
holomorphic in $\alpha$ on $\{\Real\alpha>-1\}$; the two singular contributions
at any $\alpha\to n_0\in\Z_{\ge0}$ have canceling simple poles, with combined
residue zero, and the value at $\alpha=n_0$ contains the term
$-\tfrac{1}{2\pi i}\xi^{n_0}\Log(-\xi)$.
\end{lemma}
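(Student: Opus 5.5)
The plan is to split the interval $(0,a)$ into $(0,\infty)$ minus $(a,\infty)$, the standard Mellin device. Both pieces are handled first for $-1<\Real\alpha<0$, where everything converges absolutely, and then extended in $\alpha$.

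\textbf{The half-line piece.} For $-1<\Real\alpha<0$ and $\xi\notin[0,\infty)$ we have the classical Stieltjes integral
\[
  \frac{1}{2\pi i}\int_0^\infty\frac{s^{\alpha}}{s-\xi}\,ds=m(\alpha)(-\xi)^{\alpha},
  \qquad m(\alpha)=\frac{1}{2\pi i}\cdot\frac{\pi}{\sin\pi(\alpha+1)}=\frac{1}{2i\sin\pi(\alpha+1)} .
\]
We check this first for $\xi=-t<0$ by the Beta integral $\int_0^\infty s^{\alpha}(s+t)^{-1}ds=t^{\alpha}\pi/\sin(\pi(\alpha+1))$. Both sides are holomorphic in $\xi\in\C\setminus[0,\infty)$: the right side because $(-\xi)^\alpha$ is cut exactly along $(0,\infty)$ under the stated principal convention. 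They therefore agree by the identity theorem.

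\textbf{The tail piece.} For $|\xi|<a$ we expand $(s-\xi)^{-1}=\sum_{n\ge0}\xi^n s^{-n-1}$, which converges uniformly on $s\ge a$. This gives
\[
  \frac{1}{2\pi i}\int_a^\infty\frac{s^{\alpha}}{s-\xi}\,ds=-\frac{1}{2\pi i}\sum_{n\ge0}\frac{a^{\alpha-n}}{\alpha-n}\,\xi^{n},
\]
a convergent power series with radius $a$. Subtracting the tail from the half-line piece yields \eqref{eq:model} for $-1<\Real\alpha<0$. The identity is in fact exact for $|\xi|<a$, $\xi\notin[0,a)$, so in particular it is an asymptotic expansion as $\xi\to0$.

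\textbf{Extension to $\Real\alpha>-1$, $\alpha\notin\Z_{\ge0}$.} Fix $\xi$ with $|\xi|<a$, $\xi\notin[0,a)$. The left side $\Cau_{(0,a)}[s^\alpha](\xi)$ is holomorphic in $\alpha$ on $\Real\alpha>-1$, by differentiation under the integral with the dominating bound $s^{\Real\alpha}\,\mathrm{dist}(\xi,[0,a])^{-1}$. On the right, $m(\alpha)(-\xi)^\alpha$ is meromorphic with simple poles at $\alpha\in\Z_{\ge0}$. The series converges locally uniformly in $\alpha$ away from $\Z_{\ge0}$, since $|a^{\alpha-n}/(\alpha-n)|\,|\xi|^n\lesssim a^{\Real\alpha}(|\xi|/a)^n$. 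Hence both sides are holomorphic on $\{\Real\alpha>-1\}\setminus\Z_{\ge0}$, and by analytic continuation the identity holds there.

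\textbf{Behaviour at $\alpha\to n_0\in\Z_{\ge0}$.} This is the main delicate step: the correct branch bookkeeping of the residues.
\begin{itemize}
\item Near $n_0$, $\sin\pi(\alpha+1)=-\sin\pi\alpha\approx-\pi(-1)^{n_0}(\alpha-n_0)$. Hence
$m(\alpha)(-\xi)^{\alpha}\approx -\frac{(-1)^{n_0}(-\xi)^{n_0}}{2\pi i(\alpha-n_0)}=-\frac{\xi^{n_0}}{2\pi i(\alpha-n_0)}$, using that $(-\xi)^{n_0}=(-1)^{n_0}\xi^{n_0}$ for integer powers, independent of branch.
\item The $n=n_0$ series term has residue $+\frac{1}{2\pi i}a^{0}\xi^{n_0}$.
\end{itemize}
The two residues cancel, consistent with the holomorphy of the left side. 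The finite value at $\alpha=n_0$ is then obtained from the constant terms in the Laurent expansions. Expanding $(-\xi)^\alpha=(-\xi)^{n_0}\bigl(1+(\alpha-n_0)\Log(-\xi)+\dots\bigr)$ and multiplying by the pole coefficient produces $-\frac{1}{2\pi i}\xi^{n_0}\Log(-\xi)$. The remaining constant contributions are the term $\frac{1}{2\pi i}\xi^{n_0}\log a$ from $a^{\alpha-n_0}$ and a correction from the next order of $1/\sin$; both are pure powers $\xi^{n_0}$ without logarithms. This gives the claimed logarithmic term. The only care needed is consistent use of the principal $\Log$ and of the sign of $\sin\pi(\alpha+1)$.
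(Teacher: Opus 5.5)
Your proposal is correct and follows the paper's proof essentially step for step: the split $\int_0^a=\int_0^\infty-\int_a^\infty$, the Beta integral for the half-line piece, termwise integration of the geometric series on $[a,\infty)$, continuation in $\alpha$ from $-1<\Real\alpha<0$, and the residue cancellation at $\alpha=n_0$. Your identity-theorem argument in $\xi$ replaces the paper's complex rescaling, and you compute the finite part directly where the paper defers it to Lemma~\ref{lem:polyhom-model}. One small correction: $1/\sin$ is odd about its pole, so the Laurent expansion of $m(\alpha)$ at $n_0$ has no constant term and there is no ``correction from the next order of $1/\sin$''; the value at $\alpha=n_0$ is exactly $-\tfrac{1}{2\pi i}\xi^{n_0}\Log(-\xi)+\tfrac{1}{2\pi i}\xi^{n_0}\ln a+\tfrac{1}{2\pi i}\sum_{n\neq n_0}\tfrac{a^{n_0-n}}{n_0-n}\xi^{n}$.
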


\begin{proof}
Write $\Cau_{(0,a)}[s^{\alpha}](\xi)=\frac{1}{2\pi i}(I_\infty-I_a)$ with
$I_\infty=\int_0^\infty s^{\alpha}(s-\xi)^{-1}ds$,
$I_a=\int_a^\infty s^{\alpha}(s-\xi)^{-1}ds$. For $-1<\Real\alpha<0$, scaling
$\int_0^\infty x^{\sigma-1}(x+1)^{-1}dx=\pi/\sin\pi\sigma$ ($0<\Real\sigma<1$)
by $x=s/(-\xi)$, $\sigma=\alpha+1$, gives
$I_\infty=\pi(-\xi)^{\alpha}/\sin\pi(\alpha+1)$. The uniform expansion
$(s-\xi)^{-1}=\sum_{n\ge0}\xi^{n}s^{-n-1}$ on $s\ge a$ integrates termwise to
$I_a=-\sum_{n}\xi^{n}a^{\alpha-n}/(\alpha-n)$. This is \eqref{eq:model} on
$-1<\Real\alpha<0$; both sides are holomorphic in $\alpha$ on
$\{\Real\alpha>-1\}\setminus\Z_{\ge0}$, so the identity continues in $\alpha$.
For the residue claim, the residue of $\frac{1}{2i\sin\pi(\alpha+1)}$ at
$\alpha=n_0$ is $\frac{(-1)^{n_0+1}}{2\pi i}$, so
$\mathrm{Res}_{\alpha=n_0}[m(\alpha)(-\xi)^{\alpha}]
=\frac{(-1)^{n_0+1}}{2\pi i}(-\xi)^{n_0}=-\tfrac{1}{2\pi i}\xi^{n_0}$, exactly
cancelling the residue $\tfrac{1}{2\pi i}\xi^{n_0}$ of the $n=n_0$ term. The
finite part is computed in Lemma~\ref{lem:polyhom-model}.
\end{proof}

\subsection{The Mellin model: powers times logarithms}
The key device is differentiation in the exponent,
\begin{equation}\label{eq:dalpha}
  s^{\alpha}(\log s)^{h}=\partial_\alpha^{h}s^{\alpha},
  \qquad
  (-\xi)^{\alpha}\bigl(\Log(-\xi)\bigr)^{h}=\partial_\alpha^{h}(-\xi)^{\alpha}.
\end{equation}

\begin{lemma}[Polyhomogeneous model expansion]\label{lem:polyhom-model}
Let $\alpha\in\C$, $\Real\alpha>-1$, $h\in\N_0$, $a>0$. For fixed
$\xi\notin[0,a]$,
\begin{equation}\label{eq:dalpha-transform}
  \Cau_{(0,a)}\bigl[s^{\alpha}(\log s)^{h}\bigr](\xi)
  =\partial_\alpha^{h}\,\Cau_{(0,a)}[s^{\alpha}](\xi),
\end{equation}
and the asymptotic expansion of \eqref{eq:model} may be differentiated termwise
in $\alpha$. Consequently:
\begin{enumerate}
\item \textup{(Nonresonant, $\alpha\notin\Z_{\ge0}$.)} As $\xi\to0$,
\begin{equation}\label{eq:polyhom-nonres}
  \Cau_{(0,a)}\bigl[s^{\alpha}(\log s)^{h}\bigr](\xi)
  =\sum_{i=0}^{h}\binom{h}{i}m^{(i)}(\alpha)\,(-\xi)^{\alpha}
     \bigl(\Log(-\xi)\bigr)^{h-i}
   +\sum_{n\ge0}c_{n,h}\,\xi^{n},
\end{equation}
with constant coefficients $c_{n,h}$ depending on $a,\alpha,n$ (no logarithm
occurs in the regular part when $\alpha\notin\Z_{\ge0}$). The top logarithmic
power of the singular part is $h$, with coefficient $m(\alpha)$.
\item \textup{(Resonant, $\alpha=n_0\in\Z_{\ge0}$.)} As $\xi\to0$,
\begin{equation}\label{eq:polyhom-res}
  \Cau_{(0,a)}\bigl[s^{n_0}(\log s)^{h}\bigr](\xi)
  =\frac{-1}{2\pi i\,(h+1)}\,\xi^{n_0}\bigl(\Log(-\xi)\bigr)^{h+1}
   +\sum_{0\le h'\le h}c_{h'}\,\xi^{n_0}\bigl(\Log(-\xi)\bigr)^{h'}
   +\sum_{n\ge0}\widetilde P_{n,h}(\log\xi)\,\xi^{n},
\end{equation}
with explicit constants $c_{h'}$ and polynomials $\widetilde P_{n,h}$ of degree
$\le h+1$. The top logarithmic power is $h+1$, raised by one relative to the
density.
\end{enumerate}
\end{lemma}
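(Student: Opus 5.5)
The first step is to justify \eqref{eq:dalpha-transform} by differentiating under the integral sign. Fix $\xi\notin[0,a]$, so that $|s-\xi|\ge d>0$ on $[0,a]$. On a compact set of exponents $\{|\alpha-\alpha_0|\le\varepsilon\}$ with $\Real\alpha_0-\varepsilon>-1$, the integrand $s^{\alpha}(s-\xi)^{-1}$ is holomorphic in $\alpha$. Its $h$-th $\alpha$-derivative, $s^{\alpha}(\log s)^h(s-\xi)^{-1}$, is dominated by $d^{-1}s^{\Real\alpha_0-\varepsilon}|\log s|^h\in L^1(0,a)$. Morera/dominated convergence then gives holomorphy in $\alpha$ and \eqref{eq:dalpha-transform}. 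For termwise differentiation of \eqref{eq:model} I will not differentiate an asymptotic series abstractly. Instead I will use the exact identity $\Cau_{(0,a)}[s^\alpha](\xi)=\frac1{2\pi i}(I_\infty-I_a)$ from the proof of Lemma~\ref{lem:model}, valid for $|\xi|<a$. There $I_\infty=2\pi i\,m(\alpha)(-\xi)^\alpha$ holds by continuation, and $I_a=-\sum_n \xi^n a^{\alpha-n}/(\alpha-n)$ converges absolutely and locally uniformly in $\alpha$ away from $\Z_{\ge0}$ for $|\xi|\le a/2$. Both pieces are then genuine holomorphic functions of $\alpha$, and Weierstrass' theorem allows termwise $\partial_\alpha^h$ of the convergent series. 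So the "asymptotic" expansion is in fact convergent for $|\xi|<a$, which settles the differentiability issue.

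For the nonresonant case $\alpha\notin\Z_{\ge0}$, I apply $\partial_\alpha^h$ to $m(\alpha)(-\xi)^\alpha$ and use Leibniz together with \eqref{eq:dalpha}. This yields $\sum_i\binom hi m^{(i)}(\alpha)(-\xi)^\alpha(\Log(-\xi))^{h-i}$. The regular part becomes $\frac1{2\pi i}\sum_n \xi^n\partial_\alpha^h[a^{\alpha-n}/(\alpha-n)]$, whose coefficients $c_{n,h}$ are polynomials in $\log a$ divided by powers of $(\alpha-n)$ and contain no $\log\xi$. The top log power is $h$ with coefficient $m(\alpha)\ne0$, which gives \eqref{eq:polyhom-nonres}.

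For the resonant case $\alpha=n_0$, I split off the $n=n_0$ term. Set
\[
 \Phi(\alpha)=m(\alpha)(-\xi)^{\alpha}+\frac{1}{2\pi i}\frac{a^{\alpha-n_0}}{\alpha-n_0}\xi^{n_0},
\]
and write the remaining series (over $n\neq n_0$) as holomorphic near $n_0$. Its $h$-th derivative at $n_0$ gives $\xi^n$ times constants; these are absorbed in the last sum with $\widetilde P_{n,h}$ constant for $n\ne n_0$. By Lemma~\ref{lem:model} the poles cancel, so $\Phi$ is holomorphic at $n_0$ and $\Phi^{(h)}(n_0)$ is what we need. To compute it, I Laurent-expand around $\epsilon=\alpha-n_0$. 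Write $(-\xi)^\alpha=(-\xi)^{n_0}e^{\epsilon L}$ with $L=\Log(-\xi)$, and $m(n_0+\epsilon)=\frac{(-1)^{n_0+1}}{2\pi i}\bigl(\epsilon^{-1}+\sum_{k\ge0}\mu_k\epsilon^{k}\bigr)$; the expansion of $\pi/\sin$ is odd, so in fact $\mu_{2k}=0$. Also write $a^{\epsilon}/\epsilon=\epsilon^{-1}+\sum_k(\log a)^{k+1}\epsilon^k/(k+1)!$. Since $(-\xi)^{n_0}(-1)^{n_0+1}=-\xi^{n_0}$, this gives
\[
 \Phi=\frac{-\xi^{n_0}}{2\pi i}\Bigl[\frac{e^{\epsilon L}-a^{\epsilon}}{\epsilon}+e^{\epsilon L}\sum_k\mu_k\epsilon^k\Bigr].
\]
The coefficient of $\epsilon^h$ in $(e^{\epsilon L}-1)/\epsilon$ is $L^{h+1}/(h+1)!$. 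Multiplying by $h!$ yields the leading term $\frac{-1}{2\pi i(h+1)}\xi^{n_0}L^{h+1}$. All other contributions are $\xi^{n_0}L^{h'}$ with $h'\le h$ and explicit constants $c_{h'}$, built from $\mu_k$ and $\log a$. This gives \eqref{eq:polyhom-res}. Where $\widetilde P$ is expressed in $\log\xi$ rather than $\Log(-\xi)$, I would note that $\Log(-\xi)=\log\xi\mp i\pi$ on each half-plane, which turns polynomials in one into polynomials of equal degree in the other.

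The main obstacle is this resonant bookkeeping: confirming the cancellation of poles with the correct sign, and verifying that the $\epsilon^h$ coefficient of the regularized combination gives exactly $L^{h+1}/(h+1)$ times $h!/h!$ with no further top-order contributions. The factor $1/\epsilon$ appears in both pieces, and only the $e^{\epsilon L}$ piece carries $L$. I would check this first for $h=0$ against the known formula $\Cau_{(0,a)}[1](\xi)=\frac1{2\pi i}\log\frac{a-\xi}{-\xi}$, whose leading term is $-\frac{1}{2\pi i}\Log(-\xi)$.
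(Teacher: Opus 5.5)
Your proposal is correct and follows the paper's route. You differentiate under the integral sign to get \eqref{eq:dalpha-transform}, apply Leibniz to $m(\alpha)(-\xi)^{\alpha}$ in the nonresonant case, and in the resonant case Laurent-expand in $\alpha-n_0$, merging the pole of $m$ with the $n=n_0$ regular term into the entire function $(e^{\epsilon L}-a^{\epsilon})/\epsilon$, $L=\Log(-\xi)$.

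Your observation that \eqref{eq:model} is an exact convergent identity for $|\xi|<a$, locally uniformly in $\alpha$ off $\Z_{\ge0}$, justifies termwise differentiation better than the paper's appeal to the Mellin pole sum. One small fix: when $a>1$ the dominating function should be $s^{\Real\alpha_0-\varepsilon}+s^{\Real\alpha_0+\varepsilon}$ times $d^{-1}|\log s|^{h}$, since $s^{\Real\alpha_0-\varepsilon}$ alone does not bound $s^{\Real\alpha}$ for $s>1$.
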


\begin{proof}
For fixed $\xi\notin[0,a]$ the integrand of
$\Cau_{(0,a)}[s^{\alpha}](\xi)=\frac{1}{2\pi i}\int_0^a s^{\alpha}(s-\xi)^{-1}ds$
is, by \eqref{eq:dalpha}, differentiated in $\alpha$ to give
$s^{\alpha}(\log s)^{h}(s-\xi)^{-1}$, which is absolutely integrable on $(0,a)$
for $\Real\alpha>-1$ and locally uniformly in $\alpha$; hence
\eqref{eq:dalpha-transform}. The expansion \eqref{eq:model} is the sum over poles
of the Mellin transform of $(s-\xi)^{-1}$ and is holomorphic in $\alpha$ on
$\{\Real\alpha>-1\}\setminus\Z_{\ge0}$, so its terms may be differentiated in
$\alpha$, yielding the asymptotic expansion of the left side of
\eqref{eq:dalpha-transform}.

\emph{Nonresonant.} Apply $\partial_\alpha^{h}$ to \eqref{eq:model}. By the
Leibniz rule and the second identity of \eqref{eq:dalpha},
$\partial_\alpha^{h}[m(\alpha)(-\xi)^{\alpha}]
=\sum_{i=0}^{h}\binom{h}{i}m^{(i)}(\alpha)(-\xi)^{\alpha}(\Log(-\xi))^{h-i}$,
the singular part of \eqref{eq:polyhom-nonres}. 
For the regular part,
$\partial_\alpha^{h}[a^{\alpha-n}(\alpha-n)^{-1}\xi^{n}]$ is, for
$\alpha\notin\Z_{\ge0}$, a constant (a polynomial of degree $\le h$ in $\ln a$)
times $\xi^{n}$, giving $c_{n,h}\xi^{n}$; no $\log\xi$ occurs in the regular
part for nonresonant $\alpha$.

\emph{Resonant.} Fix $n_0\in\Z_{\ge0}$ and set $\eta=\alpha-n_0$. Near $\eta=0$,
$m(\alpha)=\frac{\mu_{-1}}{\eta}+\mu_1\eta+\cdots$ (odd Laurent expansion, since
$1/\sin$ is odd about its pole) with $\mu_{-1}=\frac{(-1)^{n_0+1}}{2\pi i}$, and
$(-\xi)^{\alpha}=(-1)^{n_0}\xi^{n_0}e^{\eta L}$, $L:=\Log(-\xi)$. The $n=n_0$
regular term is $\frac{\xi^{n_0}}{2\pi i}\,\eta^{-1}e^{\eta\ln a}$. Their sum,
the only part singular as $\eta\to0$, is
\[
  G(\alpha,\xi)
  =\mu_{-1}(-1)^{n_0}\xi^{n_0}\,\frac{e^{\eta L}-e^{\eta\ln a}}{\eta}
   +\bigl(\text{terms regular in }\eta\bigr),
\]
using $\frac{\xi^{n_0}}{2\pi i}=-\mu_{-1}(-1)^{n_0}\xi^{n_0}$ to combine the two
$\eta^{-1}$ residues (which therefore cancel, confirming
Lemma~\ref{lem:model}). 
Now
$\frac{e^{\eta L}-e^{\eta\ln a}}{\eta}=\sum_{q\ge0}\frac{\eta^{q}}{(q+1)!}
\bigl(L^{q+1}-(\ln a)^{q+1}\bigr)$, so
$\partial_\alpha^{h}G|_{\alpha=n_0}=h!\,[\eta^{h}]G$ contributes
$\mu_{-1}(-1)^{n_0}\xi^{n_0}\frac{1}{h+1}\bigl(L^{h+1}-(\ln a)^{h+1}\bigr)$. The
non-analytic leading term is
$\mu_{-1}(-1)^{n_0}\frac{1}{h+1}\xi^{n_0}L^{h+1}
=\frac{-1}{2\pi i(h+1)}\xi^{n_0}L^{h+1}$, since
$\mu_{-1}(-1)^{n_0}=\frac{(-1)^{2n_0+1}}{2\pi i}=\frac{-1}{2\pi i}$. The
regular-in-$\eta$ remainder of $G$ together with the other $n\neq n_0$ regular
terms supplies the lower powers $L^{h'}$, $h'\le h$, and the analytic series with
log-coefficients of degree $\le h+1$; this is \eqref{eq:polyhom-res}.
\end{proof}

\begin{remark}
The mechanism is transparent: away from integers $m(\alpha)$ is regular and the
density's log power passes through unchanged; at an integer exponent the simple
pole of $m(\alpha)$ collides with the integer mode of the regular series, and
each differentiation in $\alpha$ that produces the resonant log adds one power,
giving $h\mapsto h+1$.
\end{remark}

\subsection{Polyhomogeneous propagation}

\begin{theorem}[Polyhomogeneous corner-mode propagation]\label{thm:propagation}
Let $f\in C^{m_0,\beta}_{\bd\alpha}(\Gw)$ satisfy
Assumption~\ref{ass:integrable}, and suppose the side densities have classical
conormal (polyhomogeneous) expansions
\[
  f_j(r)\sim\sum_{\ell\ge0}\sum_{h=0}^{N_\ell}
     a_{j,\ell,h}\,r^{\alpha_{j,\ell}}(\log r)^{h}
  \qquad(r\downarrow0),\qquad \Real\alpha_{j,\ell}\uparrow\infty,\ j=1,2 .
\]
Then on each sector $\bullet\in\{\mathrm I,\mathrm{II}\}$, as $z\to0$,
\begin{equation}\label{eq:propagation}
  \Cau_{\Gw}f(z)\sim
  \sum_{j=1}^{2}\sum_{\ell\ge0}\sum_{h'=0}^{N_\ell+\delta_{j,\ell}}
     A^{\bullet}_{j,\ell,h'}\,z^{\alpha_{j,\ell}}(\log z)^{h'}
  +\sum_{\nu\ge0}\sum_{h'}B^{\bullet}_{\nu,h'}\,z^{\nu}(\log z)^{h'},
\end{equation}
where $\delta_{j,\ell}=1$ if $\alpha_{j,\ell}\in\Z_{\ge0}$ (resonant) and $0$
otherwise. The coefficients are explicit: writing $\varphi_1(z)=z$,
$\varphi_2(z)=e^{-i\theta}z$, each density mode $a_{j,\ell,h}r^{\alpha}(\log r)^h$
contributes, via $\Cau_{(0,a_j)}[\cdot](\varphi_j(z))$ and
Lemma~\ref{lem:polyhom-model}, the singular family
$\sum_i\binom{h}{i}m^{(i)}(\alpha)(-\varphi_j(z))^{\alpha}(\Log(-\varphi_j(z)))^{h-i}$
in the nonresonant case and the family \eqref{eq:polyhom-res} with
$\xi=\varphi_j(z)$ in the resonant case. Re-expanding $(-\varphi_j(z))^{\alpha}
=z^{\alpha}\cdot\kappa^{\bullet}_{j,\alpha}$ and
$\Log(-\varphi_j(z))=\log z+\lambda^{\bullet}_{j}$ with sector-constant
$\kappa^{\bullet}_{j,\alpha},\lambda^{\bullet}_{j}$ (the entire $(p,q)$
dependence entering through $\theta=p\pi/q$ in $\varphi_2$) collects the
coefficients $A^{\bullet}_{j,\ell,h'}$. The analytic coefficients
$B^{\bullet}_{\nu,h'}$ come from the regular parts of $f_1,f_2$ and the far
endpoints $1,ce^{i\theta}$.

In particular, a nonresonant mode preserves the top logarithmic power $h$; a
resonant mode $\alpha_{j,\ell}=n_0\in\Z_{\ge0}$ with top density log power
$h$ contributes the top output power $h+1$, with coefficient
\[
  a_{j,\ell,h}\,\frac{-1}{2\pi i\,(h+1)}\,\kappa^{\bullet}_{j,\alpha_{j,\ell}} .
\]
Here each $A^{\bullet}_{j,\ell,h'}$ denotes the \emph{collected} coefficient of
$z^{\alpha_{j,\ell}}(\log z)^{h'}$, i.e. the sum over the density modes
$(j,\ell,h)$ of their individual contributions; the displayed formula is the
single-mode contribution to the highest power $h'=h+1$, which receives no other
mode.
\end{theorem}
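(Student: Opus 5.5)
The plan is to work with the rotation reduction, Proposition~\ref{prop:reduction}, rather than the lifted form \eqref{eq:main}. This gives
\[
\Cau_{\Gw}f(z)=\Cau_{(0,1)}[f_1](\varphi_1(z))+\Cau_{(0,c)}[f_2](\varphi_2(z)).
\]
It therefore suffices to expand each interval transform $\Cau_{(0,a_j)}[f_j](\xi)$ as $\xi\to0$ off $[0,a_j]$. We then substitute $\xi=\varphi_j(z)$ sector by sector.

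The finite-sheeted factorization \eqref{eq:factor-thm} gives the same result. By Lemma~\ref{lem:transport}, each mode $r^{\alpha}(\log r)^h$ lifts to $q^{-h}s^{\beta_{j,m}}(\log s)^h$ with $\beta_{j,m}=(\alpha+m+1)/q-1$. The prefactor $z^{q-1-m}$ then recombines $(z^q)^{\beta_{j,m}}$ into $z^{\alpha}$. I will note this only as a consistency check, because the direct route avoids tracking resonances at $\beta_{j,m}\in\Z_{\ge0}$, which need not coincide with $\alpha\in\Z_{\ge0}$.

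\textbf{Step 1 (truncation).} Fix $N$ and split
\[
f_j=\sum_{\Real\alpha_{j,\ell}<N}\sum_h a_{j,\ell,h}r^{\alpha_{j,\ell}}(\log r)^h+R_{j,N},
\]
where $R_{j,N}=O(r^{N}(\log r)^{K})$. The remainder must also be controlled in the conormal sense. I take this from the meaning of a classical conormal expansion: $(r\partial_r)^k$ applied to the remainder has the same order, plus Hölder control.

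Each finite mode is handled by Lemma~\ref{lem:polyhom-model} with $a=a_j$:
\begin{itemize}
\item In the nonresonant case, \eqref{eq:polyhom-nonres} gives the singular family $\sum_i\binom{h}{i}m^{(i)}(\alpha)(-\xi)^\alpha\Log(-\xi)^{h-i}$ plus a Taylor series.
\item In the resonant case, \eqref{eq:polyhom-res} gives top power $h+1$ with coefficient $-1/(2\pi i(h+1))$.
\end{itemize}

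\textbf{Step 2 (remainder).} I will show that $\Cau_{(0,a)}[R_{j,N}](\xi)$ equals a Taylor polynomial of degree $<N-1$ plus $O(|\xi|^{N-1-\epsilon})$. Write
\[
\frac{1}{s-\xi}=\sum_{n<M}\frac{\xi^n}{s^{n+1}}+\frac{\xi^M}{s^M(s-\xi)}.
\]
The moments $\int R s^{-n-1}\,ds$ converge for $n<N-1$. The tail integral is bounded by splitting at $s\sim|\xi|$ and using $|s-\xi|\gtrsim s+|\xi|$ on the sector. This is uniform only in closed subsectors, so the expansion is understood nontangentially. The same estimate handles the far endpoints, since near $s=a$ the kernel is analytic in $\xi$ for small $\xi$. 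This produces the $B^\bullet_{\nu,h'}$, with logarithms only from resonant modes.

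\textbf{Step 3 (re-expansion).} On $\Sigma_{\mathrm I}$ or $\Sigma_{\mathrm{II}}$, $\arg(-\varphi_j(z))$ differs from the chosen $\arg z$ by a constant $\gamma^\bullet_j$, chosen so that it lies in $(-\pi,\pi)$. For $j=1$ this is $-\pi$ on $\Sigma_{\mathrm I}$ and $-\pi$ on $\Sigma_{\mathrm{II}}$, with care needed at $\arg z=\pi$. For $j=2$ it is $-\theta\pm\pi$ according to the sector. Hence
\[
\Log(-\varphi_j(z))=\log z+\lambda_j^\bullet,\qquad \lambda_j^\bullet=i\gamma_j^\bullet,\qquad \kappa^\bullet_{j,\alpha}=e^{i\alpha\gamma^\bullet_j}.
\]
I will check that $\varphi_j(z)$ stays off $[0,a_j]$, so each sector maps into one connected component of the principal domain. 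Expanding $(\log z+\lambda)^{h-i}$ binomially gives the collected coefficients $A^\bullet_{j,\ell,h'}$. Since $\varphi_j(z)^n=e^{-in\theta\delta_{j2}}z^n$, the Taylor parts feed into the $B$ terms.

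\textbf{Step 4 (top power).} The top power $h'=h+1$ for a resonant mode comes only from the leading term of \eqref{eq:polyhom-res}. The binomial expansion of $(\log z+\lambda)^{h+1}$ contributes coefficient $1$ to the top term. Modes of the same $\alpha$ with lower $h$ reach at most power $h$. Combining these yields the stated coefficient
\[
a_{j,\ell,h}\,\frac{-1}{2\pi i(h+1)}\,\kappa^\bullet_{j,n_0}.
\]
For the nonresonant case the analogous statement gives top power $h$ with coefficient $m(\alpha)\kappa$.

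The main obstacle is Step 2: proving that the termwise expansion is a genuine asymptotic expansion with uniform remainder. Lemma~\ref{lem:polyhom-model} handles only exact powers on $(0,a)$, so the error must come from the Mellin/Taylor splitting above, uniformly in closed subsectors.
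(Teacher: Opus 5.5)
Your route is the paper's: rotation reduction, termwise use of Lemma~\ref{lem:polyhom-model} at $\xi=\varphi_j(z)$, then sectorwise re-expansion with $\kappa^\bullet_{j,\alpha}=e^{i\alpha\gamma^\bullet_j}$ and $\lambda^\bullet_j=i\gamma^\bullet_j$. Your constants are right. Explicitly, $\gamma^\bullet_1=-\pi$, $\gamma^{\mathrm I}_2=\pi-\theta$ and $\gamma^{\mathrm{II}}_2=-\pi-\theta$. Nothing special happens at $\arg z=\pi$, since $-z$ meets the cut of $\Log$ only when $z$ lies on the first physical ray.

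Your Step~2 is more careful than the paper. The paper controls the truncation error by the separated bound of Proposition~\ref{prop:interval-mapping}(2). That bound sees only $\|R_{j,N}\|_{L^1}$ and degenerates like $1/\dist(\varphi_j(z),[0,a_j])\sim|\xi|^{-1}$. Your subtraction of the moments $\int_0^{a}R\,s^{-n-1}\,ds$, with the split at $s\sim|\xi|$ in closed subsectors, gives the correct ``Taylor polynomial plus small remainder'' form.

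Your side remark on the lifted route is slightly off. Some $\beta_{j,m}$ lies in $\Z_{\ge0}$ iff $\alpha\in\Z_{\ge0}$, and then for exactly one $m$.

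The genuine error is in Step~4. The leading resonant term of \eqref{eq:polyhom-res} is $\frac{-1}{2\pi i(h+1)}\,\xi^{n_0}\bigl(\Log(-\xi)\bigr)^{h+1}$. It carries $\xi^{n_0}$, not $(-\xi)^{n_0}$: the sign $(-1)^{n_0}$ from $(-\xi)^{n_0}$ has already cancelled against the residue $(-1)^{n_0+1}/(2\pi i)$ of $m(\alpha)$ at $\alpha=n_0$. Substitute $\xi=\varphi_j(z)$ and use your own observation $\varphi_j(z)^{n_0}=e^{-in_0\theta\delta_{j2}}z^{n_0}$. The binomial expansion then gives the top coefficient
\[
  a_{j,\ell,h}\,\frac{-1}{2\pi i(h+1)}\,e^{-in_0\theta\delta_{j2}}
  =(-1)^{n_0}\,a_{j,\ell,h}\,\frac{-1}{2\pi i(h+1)}\,\kappa^\bullet_{j,n_0},
\]
because $\kappa^\bullet_{j,n_0}=e^{in_0\gamma^\bullet_j}=(-1)^{n_0}e^{-in_0\theta\delta_{j2}}$. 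So ``combining these yields the stated coefficient'' is false for odd $n_0$.

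A direct check: take $f_1(r)=r$ and $f_2=0$. Then $\Cau_{\Gw}f(z)=\frac{1}{2\pi i}\bigl(1+z\Log(1-z)-z\Log(-z)\bigr)$, with $\Log(-z)=\log z-i\pi$ on both sectors. The coefficient of $z\log z$ is therefore $-1/(2\pi i)$. The displayed formula, with $\kappa^\bullet_{1,1}=-1$, predicts $+1/(2\pi i)$.

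The displayed coefficient is correct only for even $n_0$, such as the paper's worked example $n_0=0$. The paper's proof, which also just reads the coefficient off \eqref{eq:polyhom-res}, makes the same slip. What your argument actually establishes is the resonance rule $h\mapsto h+1$ with top coefficient $a_{j,\ell,h}\frac{-1}{2\pi i(h+1)}\,\varphi_j(z)^{n_0}z^{-n_0}$. State that corrected form rather than claiming the displayed one. The nonresonant top coefficient $a_{j,\ell,h}\,m(\alpha)\,\kappa^\bullet_{j,\alpha}$ is unaffected.
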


\begin{proof}
By Proposition~\ref{prop:reduction},
$\Cau_{\Gw}f=\Cau_{(0,1)}[f_1](z)+\Cau_{(0,c)}[f_2](e^{-i\theta}z)$. The far
endpoints $r=1,c$ lie at positive distance from $z\to0$, so contribute only the
analytic family $\sum_{\nu,h'}B^{\bullet}_{\nu,h'}z^{\nu}(\log z)^{h'}$ (the
$\log$ powers arising from the regular parts of \eqref{eq:polyhom-nonres} and
\eqref{eq:polyhom-res}). Insert the conormal expansions of $f_j$ and apply
Lemma~\ref{lem:polyhom-model} termwise to each mode, with $\xi=\varphi_j(z)$;
this is justified by the classical asymptotic calculus for polyhomogeneous
densities, the truncation error of order $N$ propagating to order $N$ by the
separated bound of Proposition~\ref{prop:interval-mapping}(2). Side $1$ uses
$\varphi_1(z)=z$ directly. Side $2$ uses $\varphi_2(z)=e^{-i\theta}z$, so on each
sector $(-e^{-i\theta}z)^{\alpha}=z^{\alpha}\kappa^{\bullet}_{2,\alpha}$ and
$\Log(-e^{-i\theta}z)=\log z+\lambda^{\bullet}_{2}$ with sector constants; the
resulting binomial re-expansion of $(\log z+\lambda^{\bullet}_{2})^{h-i}$
redistributes among the powers $(\log z)^{h'}$, $h'\le h$ (or $\le h+1$ at a
resonance). Collecting terms gives \eqref{eq:propagation}; the leading resonant
coefficient is read from \eqref{eq:polyhom-res}.
\end{proof}

Figure~\ref{fig:asymptotic_propagation} illustrates the propagation rules for the asymptotic modes, highlighting the explicit logarithmic power increase induced by the resonance condition.

\begin{figure}[htbp]
    \centering
    \includegraphics[width=0.85\textwidth]{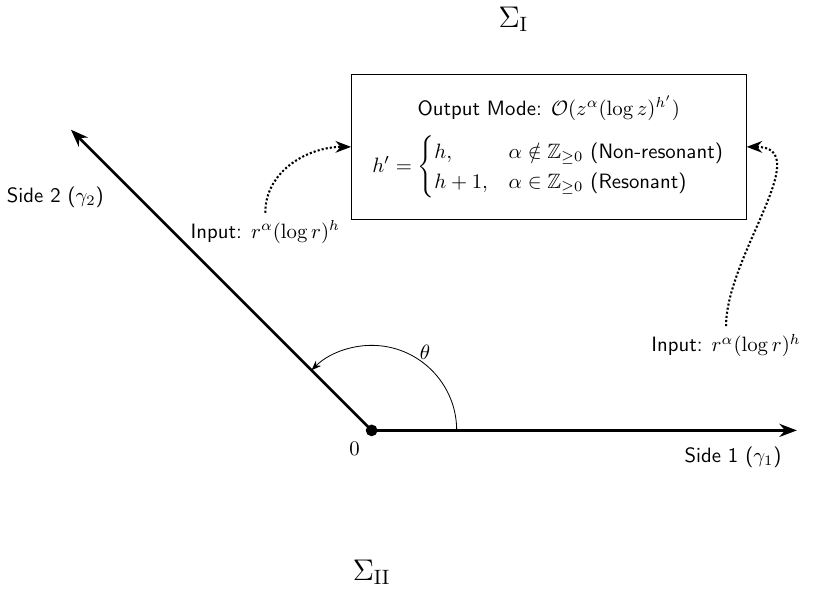}
    \caption{Propagation mechanism of the logarithmic asymptotic modes near the vertex of the wedge. The logarithmic power strictly increases by one under the resonance condition ($\alpha \in \mathbb{Z}_{\ge 0}$).}
    \label{fig:asymptotic_propagation}
\end{figure}

\begin{remark}[Finite-sheeted recombination]
The same expansion follows from the factorization \eqref{eq:factor-thm} and
exhibits its mechanism. By Lemma~\ref{lem:transport} the lifted mode in sheet
$(j,m)$ has exponent $\beta_{j,m}^{(\ell)}=(\alpha_{j,\ell}+m+1)/q-1$, and the
$m$-th block contributes $z^{q-1-m}(z^{q})^{\beta_{j,m}^{(\ell)}}=z^{\alpha_{j,\ell}}$,
independently of $m$, since the exponents add:
$(q-1-m)+(\alpha_{j,\ell}+m+1-q)=\alpha_{j,\ell}$. Thus, the single physical mode
$z^{\alpha_{j,\ell}}$ is split by the lift into $q$ interval endpoint modes of
common exponent but distinct sheet weights, whose recombination against the
branch factors and per-sheet Mellin constants reconstitutes
$A^{\bullet}_{j,\ell,\bullet}$. The covering $w=z^{q}$ uniformizes the angular
monodromy of $z^{\alpha}$ across the $q$ sectors; this organization of the
corner modes, not any reduction of the transform, is the role of the
finite-sheeted structure.
\end{remark}

\subsection{Relation to the Kerzman--Stein pathology}
For continuously differentiable curves, the Kerzman--Stein operator
$A=\Cau-\Cau^{*}$ is compact \cite{kerzman1978cauchy}; for piecewise
continuously differentiable curves it is not, and the finite symmetric wedge
furnishes an explicit essential spectrum \cite{bolt2015kerzman}. Notably,
\cite{bolt2015kerzman} represents $L^{2}$ of the symmetric wedge by pairs of functions
on a unit interval and, after a change of variable, realizes the operator as a
convolution on the line---the same passage from a two-sided wedge to interval
data that underlies the rotation reduction \eqref{eq:reduction}. The present
calculus is the constructive counterpart of that spectral diagnosis: the
factorization \eqref{eq:factor-thm} and the propagation
Theorem~\ref{thm:propagation} exhibit the algebraic modes
$z^{\alpha_{j,\ell}}(\log z)^{h'}$ and their branch coefficients
$A^{\bullet}_{j,\ell,h'}$ responsible for the loss of compactness, uniformized
by $w=z^{q}$.

\section{Logarithmic transforms and real-potential corrections}
\label{sec:log}

\subsection{The antiderivative relation}
Fix a branch of $\log(\zeta-z)$ on $\C\setminus\Gw$ and define
\begin{equation}\label{eq:log-defs}
  \Lop_{\Gw}f(z)=\int_{\Gw}\log|\zeta-z|\,f(\zeta)\,|d\zeta|,
  \qquad
  \Lcx_{\Gw}f(z)=\int_{\Gw}\log(\zeta-z)\,f(\zeta)\,d\zeta .
\end{equation}
Differentiation under the integral gives the antiderivative relation
\begin{equation}\label{eq:antideriv}
  \partial_z\Lcx_{\Gw}f(z)=-2\pi i\,\Cau_{\Gw}f(z),
  \qquad z\in\C\setminus\Gw,
\end{equation}
so the factorization \eqref{eq:factor-thm} determines $\Lcx_{\Gw}f$ up to a
holomorphic normalization. We characterize the logarithmic transform
accordingly rather than forcing a closed form.

\subsection{Finite-sheeted logarithmic decomposition}

\begin{theorem}[Logarithmic decomposition]\label{thm:log}
Let $f\in C^{m_0,\beta}_{\bd\alpha}(\Gw)$ satisfy
Assumption~\ref{ass:integrable}, fix a connected component $\Omega$ of
$\C\setminus\Gw$, a basepoint $z_0\in\Omega$, and compatible logarithm branches
on $\Omega$. Then on $\Omega$
\begin{equation}\label{eq:log-decomp}
  \Lcx_{\Gw}f(z)
  =\Lcx_{(0,1)}[f_1](z)
   +e^{i\theta}\,\Lcx_{(0,c)}[f_2]\!\bigl(e^{-i\theta}z\bigr)
   +\kappa_\Omega\,e^{i\theta}\!\!\int_0^c f_2(r)\,dr
   +K_{z_0},
\end{equation}
where $\kappa_\Omega=i\theta+2\pi iN_\Omega$ ($N_\Omega\in\Z$) records the
branch of the second-side logarithm on $\Omega$ and $K_{z_0}$ fixes
$\Lcx_{\Gw}f(z_0)$. The right side is characterized by \eqref{eq:antideriv}
together with this normalization. Applying the single-interval logarithmic lift
(Proposition~\ref{prop:log-lift}) to the two interval transforms yields an
antiderivative-level $2q$-term finite-sheeted representation in $z^{q}$ and
$(-1)^{p}z^{q}$, the algebraic and logarithmic corrections appearing as the
primitives and integration constants of that representation, rather than in
closed form.
\end{theorem}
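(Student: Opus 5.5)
The argument mirrors the rotation reduction (Proposition~\ref{prop:reduction}), carried out at the level of the logarithmic kernel.

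\emph{Splitting.} Split $\Lcx_{\Gw}f(z)$ into the contributions of the two sides. On the first side, $\zeta=r$ and $d\zeta=dr$, so the contribution is $\int_0^1\log(r-z)f_1(r)\,dr=\Lcx_{(0,1)}[f_1](z)$. For this we choose the branch of the interval logarithm on $\Omega$ to agree with the fixed branch of $\log(\zeta-z)$ restricted to the first side; any integer discrepancy $2\pi i$ is absorbed into $K_{z_0}$. On the second side, $\zeta=re^{i\theta}$ and $d\zeta=e^{i\theta}dr$. Factor
\[
  \zeta-z=e^{i\theta}\bigl(r-e^{-i\theta}z\bigr),
\]
so that
\[
  \log(\zeta-z)=\log\bigl(r-e^{-i\theta}z\bigr)+i\theta+2\pi iN(z,r),
\]
with $N(z,r)\in\Z$ determined by the chosen branches. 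Integrating against $f_2(r)e^{i\theta}\,dr$ produces three pieces: $e^{i\theta}\Lcx_{(0,c)}[f_2](e^{-i\theta}z)$, the constant term $\kappa e^{i\theta}\int_0^c f_2$, and the integer correction.

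\emph{Constancy of $N$.} This is the main point to verify: $N(z,r)$ must be constant for $(z,r)\in\Omega\times(0,c)$. Both $\log(\zeta-z)$ and $\log(r-e^{-i\theta}z)$ are continuous in $(z,r)$ on $\Omega\times(0,c)$. This holds because $\zeta-z\neq0$ there, and the branches are taken compatible, i.e.\ continuous on the connected set $\Omega\times(0,c)$. Their difference, minus $i\theta$, is therefore a continuous $2\pi i\Z$-valued function on a connected set, hence constant. Call this constant $N_\Omega$ and set $\kappa_\Omega=i\theta+2\pi iN_\Omega$. If instead one only fixes branches loosely, I would first try to define $N_\Omega$ at the basepoint $z_0$ and transport it by continuity along paths in $\Omega$. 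Any leftover constant discrepancy, including the choice of branch for the first-side logarithm, is collected into $K_{z_0}$, which is then fixed by evaluating both sides at $z_0$. Summing the three pieces gives \eqref{eq:log-decomp}.

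\emph{Characterization.} Differentiate the right side of \eqref{eq:log-decomp} in $z$. The constant terms vanish. By the interval version of \eqref{eq:antideriv}, $\partial_\xi\Lcx_{(0,a)}[g]=-2\pi i\,\Cau_{(0,a)}[g]$, together with the chain rule factor $e^{-i\theta}$ that cancels the prefactor $e^{i\theta}$, the derivative equals $-2\pi i$ times the right side of \eqref{eq:reduction}. That is exactly $-2\pi i\,\Cau_{\Gw}f$. Since $\Omega$ is connected, a holomorphic function on $\Omega$ is determined by its derivative and its value at $z_0$. Hence \eqref{eq:antideriv} together with the normalization characterizes the right side. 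Absolute convergence and differentiation under the integral near the vertex are justified by Assumption~\ref{ass:integrable}, since $\log$ is integrable against $r^{\Real\alpha_j}$.

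\emph{Finite-sheeted form.} Apply the single-interval logarithmic lift (Proposition~\ref{prop:log-lift}) to $\Lcx_{(0,1)}[f_1](z)$ and to $\Lcx_{(0,c)}[f_2](e^{-i\theta}z)$, using $(e^{-i\theta}z)^q=(-1)^pz^q$ exactly as in Theorem~\ref{thm:factorization}. This yields the $2q$-term representation in $z^q$ and $(-1)^pz^q$. Its derivative reproduces \eqref{eq:main} via Theorem~\ref{thm:decomposition}. The algebraic and logarithmic corrections then appear only as the primitives and integration constants of that representation, which are again fixed at $z_0$.
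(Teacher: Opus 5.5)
Your proposal is correct and follows the paper's proof essentially step for step: the side splitting, the factorization $\zeta-z=e^{i\theta}(r-e^{-i\theta}z)$ producing the branch constant $\kappa_\Omega$, differentiation to recover $-2\pi i$ times the rotation reduction \eqref{eq:reduction}, and Proposition~\ref{prop:log-lift} for the $2q$-term lifted form. Your continuity-and-connectedness argument makes explicit the constancy of $N_\Omega$, which the paper only asserts. Note, though, that it rests on the compatibility hypothesis rather than on $\zeta-z\neq0$: $\C\setminus\Gw$ is connected but not simply connected, so continuous branches on $\Omega\times(0,c)$ exist only after slitting $\Omega$, a caveat that is equally implicit in the paper's statement.
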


\begin{proof}
The first side gives $\Lcx_{(0,1)}[f_1](z)$ directly. On the second side,
$\zeta=re^{i\theta}$, $d\zeta=e^{i\theta}dr$, and for the fixed branch on
$\Omega$, $\log(re^{i\theta}-z)=\kappa_\Omega+\log(r-e^{-i\theta}z)$ with
$\kappa_\Omega=i\theta+2\pi iN_\Omega$ constant on $\Omega$; integrating gives
the second and third terms of \eqref{eq:log-decomp}. Differentiating in $z$ and
using $\partial_z\Lcx_{(0,a)}[g](\xi)=-2\pi i\,\Cau_{(0,a)}[g](\xi)$ reproduces
$-2\pi i$ times the reduction \eqref{eq:reduction}; the $z$-independent terms
drop and are pinned by the value at $z_0$. The lifted form is
Proposition~\ref{prop:log-lift}.
\end{proof}

\begin{proposition}[Single-interval logarithmic lift]\label{prop:log-lift}
For $a>0$, $g\in L^1(0,a)$, and $\xi$ with $\xi^{q}\notin(0,a^{q})$,
\begin{equation}\label{eq:log-lift}
  \Lcx_{(0,a)}[g](\xi)
  =\frac{-2\pi i}{q}\sum_{m=0}^{q-1}\int_{\xi_0}^{\xi}\eta^{q-1-m}\,
     \Cau_{(0,a^{q})}[\tilde g_m](\eta^{q})\,d\eta
   +\Lcx_{(0,a)}[g](\xi_0),
\end{equation}
with $\tilde g_m$ as in Lemma~\ref{lem:lift} and the path in $\C\setminus[0,a]$.
Equivalently, the lifted logarithmic formula is the $\xi$-antiderivative of the
lifted Cauchy formula \eqref{eq:lift}, normalized at $\xi_0$.
\end{proposition}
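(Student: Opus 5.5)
The plan is to derive \eqref{eq:log-lift} from the fundamental theorem of calculus, using the interval antiderivative relation and then Lemma~\ref{lem:lift}.

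First, show that $\Lcx_{(0,a)}[g]$ is holomorphic on the slit domain and that
\[
  \partial_\xi\Lcx_{(0,a)}[g](\xi)=-2\pi i\,\Cau_{(0,a)}[g](\xi).
\]
Fix the branch of $\log(r-\xi)$ continuously in $\xi\in\C\setminus[0,a]$ along the chosen path, uniformly in $r\in(0,a)$. Then $\partial_\xi\log(r-\xi)=-(r-\xi)^{-1}$. On compact sets at positive distance from $[0,a]$, the integrand and its $\xi$-derivative are dominated by $C|g(r)|$, and $g\in L^1(0,a)$. Dominated convergence therefore justifies differentiating under the integral. Integrating this identity along the path from $\xi_0$ to $\xi$ in $\C\setminus[0,a]$ gives
\[
  \Lcx_{(0,a)}[g](\xi)=\Lcx_{(0,a)}[g](\xi_0)-2\pi i\int_{\xi_0}^{\xi}\Cau_{(0,a)}[g](\eta)\,d\eta .
\]

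Second, substitute Lemma~\ref{lem:lift} pointwise in $\eta$. This replaces $\Cau_{(0,a)}[g](\eta)$ by $\frac1q\sum_m \eta^{q-1-m}\Cau_{(0,a^q)}[\tilde g_m](\eta^q)$. Because the sum is finite, it commutes with the path integral, and \eqref{eq:log-lift} follows. The closing remark about antiderivatives is then a restatement: the $\xi$-derivative of the right side of \eqref{eq:log-lift} is $-2\pi i$ times the right side of \eqref{eq:lift}.

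The main obstacle is domain bookkeeping. Lemma~\ref{lem:lift} holds only where $\eta^{q}\notin[0,a^{q}]$, that is, off the star $\Sigma=\{\eta:\eta^q\in[0,a^q]\}$, but the path is only assumed to avoid $[0,a]$. I would handle this as in Theorem~\ref{thm:factorization}. Each lifted block is holomorphic off $\Sigma$, and by Lemma~\ref{lem:lift} their sum coincides with $\Cau_{(0,a)}[g]$ there. Since $\Cau_{(0,a)}[g]$ is holomorphic on all of $\C\setminus[0,a]$, the sum extends holomorphically across the spurious rays $\Sigma\setminus[0,a]$. The integrand in \eqref{eq:log-lift} is read as this continued sum, or equivalently the path is perturbed to meet $\Sigma$ only on a null set. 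With either reading, the integral is well defined and path-independent within a homotopy class in $\C\setminus[0,a]$.

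The condition $\xi^q\notin(0,a^q)$ in the statement serves only to make the endpoint evaluation unambiguous. The points $\xi$ with $\xi^q=0$ or $\xi^q=a^q$ off $[0,a]$ are isolated and harmless.

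Finally, check branch consistency. If the path winds around $[0,a]$, both sides change by the same multiple of $2\pi i\int_0^a g$. The reason is that $\Lcx_{(0,a)}[g]$ is evaluated by continuation along the same path used to define the integral, so the identity holds for that continued branch.
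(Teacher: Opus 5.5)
Your proposal is correct and follows essentially the paper's proof: the antiderivative relation $\partial_\xi\Lcx_{(0,a)}[g]=-2\pi i\,\Cau_{(0,a)}[g]$, substitution of Lemma~\ref{lem:lift}, and integration along a path in $\C\setminus[0,a]$; your handling of the spurious star rays and of the branch monodromy adds bookkeeping that the paper leaves implicit. One caution about that bookkeeping: crossings of $\Sigma$ and tip endpoints ($\xi^{q}=a^{q}$, $\xi\neq a$) are harmless only for the summed integrand, because for merely integrable $g$ a single block $\eta^{q-1-m}\Cau_{(0,a^{q})}[\tilde g_m](\eta^{q})$ can fail to be integrable there, so you should perform the interchange $\int\sum_m=\sum_m\int$ on a path avoiding $\Sigma$ (always possible within the same homotopy class in $\C\setminus[0,a]$) and keep the sum inside the integral when an endpoint is a tip.
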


\begin{proof}
From $\partial_\xi\Lcx_{(0,a)}[g]=-2\pi i\,\Cau_{(0,a)}[g]$ and
Lemma~\ref{lem:lift}, $\partial_\xi\Lcx_{(0,a)}[g](\xi)
=\frac{-2\pi i}{q}\sum_m\xi^{q-1-m}\Cau_{(0,a^{q})}[\tilde g_m](\xi^{q})$;
integrate along a path in $\C\setminus[0,a]$ and add the value at $\xi_0$. Thus
differentiating the lifted logarithmic formula returns precisely \eqref{eq:lift};
the logarithmic lift is its antiderivative, not a verbatim application of
Lemma~\ref{lem:lift}.
\end{proof}

\subsection{Real logarithmic transform}
The relation between $\Lop_{\Gw}$ and $\Lcx_{\Gw}$ is \emph{not} a global real
part: $\log|\zeta-z|=\Real\log(\zeta-z)$ holds pointwise, but the orientation
factor $d\zeta=e^{i\theta_s}|d\zeta|$ on a tilted side rotates the integrand
before the real part is taken.

\begin{proposition}[Corrected real-part relation]\label{prop:realpart}
Let side $s\in\{1,2\}$ have tangent phase $\theta_1=0$, $\theta_2=\theta$, and
write $\Lcx_s,\Lop_s$ for the single-side transforms.
\begin{enumerate}
\item If $f$ is real on side $s$, then
  $\Lop_s f(z)=\Real\!\bigl(e^{-i\theta_s}\Lcx_s f(z)\bigr)$, and
  $\Lop_{\Gw}f=\sum_s\Real\!\bigl(e^{-i\theta_s}\Lcx_s f\bigr)$.
\item For complex $f$,
  $\Lop_s f(z)=\tfrac12 e^{-i\theta_s}\Lcx_s f(z)
   +\tfrac12 e^{\,i\theta_s}\overline{\Lcx_s\bar f(z)}.$
\end{enumerate}
Here the bar is the pointwise conjugation of the resulting scalar function at the
same $z$ (not $\bar z$), using the same logarithm branch before conjugating. In
particular $\Lop_{\Gw}f\neq\Real(\Lcx_1 f+\Lcx_2 f)$ in general.
\end{proposition}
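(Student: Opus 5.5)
The plan is to work side by side, since both $\Lop_{\Gw}$ and $\Lcx_{\Gw}$ are sums of the two single-side transforms; it therefore suffices to prove (1) and (2) for a fixed side $s$ and then sum. On side $s$ I parametrize $\zeta=re^{i\theta_s}$, $0<r<a_s$ (with $a_1=1$, $a_2=c$), so that $|d\zeta|=dr$ and $d\zeta=e^{i\theta_s}dr$. This gives
\[
  \Lop_s f(z)=\int_0^{a_s}\log|re^{i\theta_s}-z|\,f_s(r)\,dr,
  \qquad
  e^{-i\theta_s}\Lcx_s f(z)=\int_0^{a_s}\log(re^{i\theta_s}-z)\,f_s(r)\,dr .
\]
Multiplying $\Lcx_s$ by $e^{-i\theta_s}$ exactly removes the orientation factor. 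This is the key normalization: without it, the phase $e^{i\theta_s}$ would mix the real and imaginary parts of the logarithm.

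For (2), I use the pointwise identity $\log|w|=\tfrac12\bigl(\log w+\overline{\log w}\bigr)$, which holds for any fixed branch of $\log w$ because $\Real\log w=\log|w|$ regardless of the branch. Applying this with $w=re^{i\theta_s}-z$ inside the first integral splits $\Lop_s f(z)$ into two pieces. The first piece is $\tfrac12\int_0^{a_s}\log(re^{i\theta_s}-z)f_s(r)\,dr=\tfrac12e^{-i\theta_s}\Lcx_s f(z)$. The second piece is $\tfrac12\int_0^{a_s}\overline{\log(re^{i\theta_s}-z)}\,f_s(r)\,dr$. Writing $f_s=\overline{\bar f_s}$, it equals
\[
  \tfrac12\,\overline{\int_0^{a_s}\log(re^{i\theta_s}-z)\,\bar f_s(r)\,dr}
  =\tfrac12\,\overline{e^{-i\theta_s}\Lcx_s\bar f(z)}
  =\tfrac12\,e^{i\theta_s}\,\overline{\Lcx_s\bar f(z)} .
\]
Here the conjugation acts on the scalar value at the same $z$, and the same branch is used inside $\Lcx_s\bar f$, exactly as the statement stipulates. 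Absolute integrability, by Assumption~\ref{ass:integrable} together with the local integrability of the logarithm, justifies pulling the conjugation through the integral.

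Part (1) then follows from (2). If $f$ is real on side $s$, then $\bar f=f$ there, and the right-hand side of (2) becomes $\tfrac12\bigl(X+\bar X\bigr)=\Real X$ with $X=e^{-i\theta_s}\Lcx_s f(z)$. Summing over $s$ gives the formula for $\Lop_{\Gw}$.

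For the final ``in particular'' claim, I would give an explicit counterexample. Take $f=0$ on side 1 and $f\equiv1$ on side 2, with $\theta=\pi/2$, so that $\Lcx_2f=i\int_0^c\log(ir-z)\,dr$. Then $\Real(\Lcx_2 f)=-\int_0^c\Im\log(ir-z)\,dr$, which is an argument integral and not $\int_0^c\log|ir-z|\,dr$. For instance, at real $z=-1$ the two quantities are $-\int_0^c\arctan r\,dr\neq\tfrac12\int_0^c\log(1+r^2)\,dr$, since the first is negative and the second is positive. The main point of care, rather than a real obstacle, is the branch bookkeeping in (2): the identity $\Real\log w=\log|w|$ must be applied for the branch fixed on the component $\Omega$, and $\Lcx_s\bar f$ must be evaluated with that same branch.
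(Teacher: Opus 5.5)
Your proof is correct and is essentially the paper's argument: both split $\log|\zeta-z|=\tfrac12\bigl(\log(\zeta-z)+\overline{\log(\zeta-z)}\bigr)$ for the fixed branch, absorb the orientation phase (you through the arclength parametrization $d\zeta=e^{i\theta_s}dr$, the paper through $|d\zeta|=e^{-i\theta_s}d\zeta$ and $\overline{d\zeta}=e^{-2i\theta_s}d\zeta$), and obtain (1) as the self-conjugate case of (2). Your explicit counterexample for the final clause, which the paper only asserts, is a useful addition; just note that the sign comparison at $z=-1$ assumes the principal branch of $\log(1+ir)$, since another branch shifts $\Real\Lcx_2 f(-1)$ by $-2\pi Nc$.
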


\begin{proof}
On side $s$, $|d\zeta|=e^{-i\theta_s}d\zeta$, so
$\Lop_s f=e^{-i\theta_s}\int_s\log|\zeta-z|f\,d\zeta$. With
$\log|\zeta-z|=\tfrac12(\log(\zeta-z)+\overline{\log(\zeta-z)})$, the first half
is $\tfrac12 e^{-i\theta_s}\Lcx_s f$. For the second, using
$\overline{d\zeta}=e^{-2i\theta_s}d\zeta$,
$\int_s\overline{\log(\zeta-z)}f\,d\zeta
=\overline{\int_s\log(\zeta-z)\bar f\,\overline{d\zeta}}
=e^{2i\theta_s}\overline{\Lcx_s\bar f(z)}$; multiply by $\tfrac12 e^{-i\theta_s}$
to obtain (2). For real $f$ the two terms are conjugate and sum to
$\Real(e^{-i\theta_s}\Lcx_s f)$, which is (1).
\end{proof}

\begin{remark}
The vertex asymptotics of $\Lop_{\Gw}f$ follow by integrating those of
$\Cau_{\Gw}f$ (Theorem~\ref{thm:propagation}) through \eqref{eq:antideriv} and
applying Proposition~\ref{prop:realpart}: a mode $z^{\alpha}(\log z)^{h}$
integrates to a combination of $z^{\alpha+1}(\log z)^{h'}$, $h'\le h$, and a
resonant $z^{n}(\log z)^{h}$ to a term in $z^{n+1}(\log z)^{h+1}$.
\end{remark}

\section{Rational-corner singular structure of Helmholtz layer potentials}
\label{sec:helmholtz}

\subsection{The local Helmholtz kernel}
For $\Delta u+k^{2}u=0$ in $\R^{2}$ the outgoing fundamental solution is
$\Phi_k(x,y)=\tfrac{i}{4}H_0^{(1)}(k|x-y|)$. Near the diagonal it has the local
structure
\begin{equation}\label{eq:phi-local}
  \Phi_k(x,y)
  =-\frac{1}{2\pi}\log|x-y|\,A_k\!\bigl(|x-y|^{2}\bigr)+B_k\!\bigl(|x-y|^{2}\bigr),
  \qquad A_k(0)=1,
\end{equation}
with $A_k,B_k$ analytic near $0$; indeed
$A_k(r^{2})=J_0(kr)$ and $B_k$ collects the remaining even analytic part. Thus
the leading singularity of $\Phi_k$ is the logarithmic kernel of
\S\ref{sec:log}; the term $(A_k-1)\log r$ is $O(r^{2}\log r)$, hence two orders
smoother at the diagonal than the leading singularity, while $B_k$ is analytic
at $r=0$.

\subsection{Model potential operators and the singular-operator algebra}
On the straight wedge $\Gw$, define the single-, double-, and adjoint
double-layer model operators
\begin{align}
  S_0 f(z)&=\int_{\Gw}\log|\zeta-z|\,f(\zeta)\,|d\zeta|=\Lop_{\Gw}f(z),\\
  K_0 f(z)&=\operatorname{p.v.}\!\int_{\Gw}
     \partial_{n(\zeta)}\log|\zeta-z|\,f(\zeta)\,|d\zeta|,\\
  K_0' f(z)&=\operatorname{p.v.}\!\int_{\Gw}
     \partial_{n(z)}\log|\zeta-z|\,f(\zeta)\,|d\zeta|,
\end{align}
with $n$ a choice of unit normal, constant on each side. Identifying $\R^2$ with
$\C$, write $\nu(\zeta)=n_1(\zeta)+in_2(\zeta)$ for the complex form of the
normal, so $\nu(\zeta)=\nu_s$ is constant on side $s$.

\begin{proposition}[Potential operators as Cauchy boundary operators]
\label{prop:potential-algebra}
With $\partial_\zeta=\tfrac12(\partial_{x}-i\partial_{y})$ one has, for
$\zeta\neq z$,
\begin{equation}\label{eq:normal-kernels}
  \partial_{n(\zeta)}\log|\zeta-z|=\Real\frac{\nu(\zeta)}{\zeta-z},
  \qquad
  \partial_{n(z)}\log|\zeta-z|=-\Real\frac{\nu(z)}{\zeta-z}.
\end{equation}
Consequently $S_0=\Lop_{\Gw}$, and $K_0,K_0'$ are real-linear combinations, with
side-constant coefficients $\nu_s$, of boundary values of the finite-sheeted
wedge Cauchy transform $\Cau_{\Gw}$ and its conjugate. Explicitly, for the
boundary value from a fixed side,
\begin{equation}\label{eq:K-as-cauchy}
  K_0 f=\Real\Bigl[\,2\pi i\sum_{s}\nu_s e^{-i\theta_s}\,\Cau^{\mathrm{bv}}_{s}f\Bigr],
  \qquad
  K_0' f(z)=-\Real\Bigl[\,2\pi i\,\nu(z)\sum_{s} e^{-i\theta_s}\,\Cau^{\mathrm{bv}}_{s}f(z)\Bigr]
  \ \ (\text{for real }f),
\end{equation}
where $\Cau^{\mathrm{bv}}_{s}$ denotes the boundary value of the side-$s$ Cauchy
transform; the factor $2\pi i$ is $+2\pi i$ because $\int_s f(\zeta)(\zeta-z)^{-1}
d\zeta=2\pi i\,\Cau_s f$ for the convention \eqref{eq:cauchy-def}. For complex
$f$ the conjugate term $\overline{\Cau_{\Gw}\bar f}$ appears, via the real-part
splitting of \eqref{eq:normal-kernels}.
\end{proposition}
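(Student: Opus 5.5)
The plan is to verify the two kernel identities \eqref{eq:normal-kernels} first by a holomorphic-function computation, then substitute them side by side into the definitions of $K_0$ and $K_0'$. The orientation conversion $|d\zeta|=e^{-i\theta_s}d\zeta$ already used in Proposition~\ref{prop:realpart} turns the resulting integrals into side Cauchy transforms. The identity $S_0=\Lop_{\Gw}$ is just the definition.

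\emph{Step 1 (kernels).} For fixed $z$, the function $F(\zeta)=\log(\zeta-z)$ is locally holomorphic off $z$, with $\log|\zeta-z|=\Real F$ for any branch. The directional derivative of $\Real F$ at $\zeta$ in the direction of the unit vector $(n_1,n_2)$ is $\frac{d}{dt}\Real F(\zeta+t\nu)\big|_{t=0}=\Real\bigl(F'(\zeta)\nu\bigr)$, by the chain rule for holomorphic $F$ and the identification $(n_1,n_2)\leftrightarrow\nu$. Since $F'(\zeta)=1/(\zeta-z)$, this gives the first identity. For the second, regard $\log(\zeta-z)$ as a holomorphic function of $z$; its $z$-derivative is $-1/(\zeta-z)$, and the same argument with the normal $\nu(z)$ gives $-\Real\bigl(\nu(z)/(\zeta-z)\bigr)$.

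\emph{Step 2 (reduction to Cauchy transforms).} Split $\Gw$ into its two sides. On side $s$ the normal is the constant $\nu_s$ and $|d\zeta|=e^{-i\theta_s}d\zeta$. For real $f$, $\Real$ commutes with multiplication by the real quantity $f\,|d\zeta|$, so
\[
  \int_s\Real\frac{\nu_s}{\zeta-z}\,f\,|d\zeta|
  =\Real\Bigl[\nu_s e^{-i\theta_s}\int_s\frac{f(\zeta)}{\zeta-z}\,d\zeta\Bigr]
  =\Real\bigl[2\pi i\,\nu_s e^{-i\theta_s}\,\Cau_s f(z)\bigr],
\]
using \eqref{eq:cauchy-def} for the normalization $2\pi i$. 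For $z$ off $\Gw$ this is an honest identity; summing over $s$ gives the $K_0$ formula. For $K_0'$, the factor $\nu(z)$ does not depend on $\zeta$, so it pulls out of both the integral and the sum, yielding $-\Real\bigl[2\pi i\,\nu(z)\sum_s e^{-i\theta_s}\Cau_s f(z)\bigr]$. Each $\Cau_s f$ is one of the two terms of the rotation reduction \eqref{eq:reduction}, hence of the finite-sheeted transform via Theorem~\ref{thm:factorization}. For complex $f$, write $\Real w=\tfrac12(w+\bar w)$ inside the integral, exactly as in the proof of Proposition~\ref{prop:realpart}. The second half is the conjugate (at the same $z$) of the transform of $\bar f$, twisted by $e^{2i\theta_s}$, which produces the term $\overline{\Cau_{\Gw}\bar f}$.

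\emph{Step 3 (boundary values).} This step needs the most care. For $z$ on an open edge, the principal value in $K_0,K_0'$ must be matched with $\Cau^{\mathrm{bv}}_s$. For the side containing $z$, I would use Theorem~\ref{thm:plemelj} and Proposition~\ref{prop:interval-mapping}(4): on compact subintervals, the principal value equals the average $\tfrac12(\Cau^+_s+\Cau^-_s)$, and a one-sided limit differs from it by $\pm\tfrac12 f$. The other side contributes a term that is holomorphic near $z$, as in the proof of Theorem~\ref{thm:plemelj}. Inserted into the formula, the correction is $\Real[\pm\pi i\,\nu_s e^{-i\theta_s}f]$. Since $\nu_s e^{-i\theta_s}=\pm i$ (the normal is the tangent rotated by $\pm\pi/2$), this correction is $\mp\pi f$, which is real. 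For $K_0'$ the correction involves $\nu(z)$ in place of $\nu_s$, but $\nu(z)=\nu_s$ on the side containing $z$, so the same computation applies.

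I would therefore read $\Cau^{\mathrm{bv}}_s$ as the principal-value (averaged) boundary value and record that a fixed one-sided value shifts the identity only by this explicit real multiple of $f$. The same local jump term is present in the classical double layer. The remaining technical point is integrability near the vertex, and Assumption~\ref{ass:integrable} together with the separated bound handles it away from $0$. I expect the bookkeeping of this jump term and of the sign convention for $\nu_s$ to be the main obstacle; the algebra in Steps 1 and 2 is routine.
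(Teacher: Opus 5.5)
Your Steps 1--2 follow the paper's proof. The chain-rule identity $\frac{d}{dt}\Real\log(\zeta+t\nu-z)\big|_{t=0}=\Real\frac{\nu}{\zeta-z}$ is equivalent to the paper's $\partial_n u=2\Real(\nu\,\partial_\zeta u)$ with $\partial_\zeta u=\frac{1}{2(\zeta-z)}$, and the side-wise conversion $|d\zeta|=e^{-i\theta_s}d\zeta$, the extraction of $\nu(z)$ for $K_0'$, and the real-part splitting for complex $f$ are the same as there.

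Your Step 3 goes beyond the paper, which stops at ``whose boundary value gives the first formula'', and it is correct. For real $f$ and $z$ on an edge, the same-edge p.v.\ integrand $\Real\bigl(\nu_s/(\zeta-z)\bigr)$ vanishes identically, as in the remark after the proposition. This matches $\Real\bigl[2\pi i\,\nu_s e^{-i\theta_s}\cdot\tfrac12(\Cau_s^{+}+\Cau_s^{-})f\bigr]=0$. A one-sided value instead gives the one-sided trace of the double-layer potential, which differs from the p.v.\ $K_0$ by exactly the real jump $\mp\pi f$ you record, so the statement's one-sided reading needs that correction.
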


\begin{proof}
For a real function $u$ and unit direction $\nu=n_1+in_2$,
$\partial_n u=2\Real(\nu\,\partial_\zeta u)$ (write
$\partial_x=\partial_\zeta+\partial_{\bar\zeta}$,
$\partial_y=i(\partial_\zeta-\partial_{\bar\zeta})$ and use
$\partial_{\bar\zeta}u=\overline{\partial_\zeta u}$). With
$u(\zeta)=\log|\zeta-z|=\tfrac12\log((\zeta-z)\overline{(\zeta-z)})$,
$\partial_\zeta u=\tfrac12(\zeta-z)^{-1}$, giving the first identity in
\eqref{eq:normal-kernels}; the second follows from
$\partial_z\log|\zeta-z|=-\tfrac12(\zeta-z)^{-1}$ and the same formula in $z$.
For real $f$, $K_0 f=\Real\int_{\Gw}\frac{\nu(\zeta)}{\zeta-z}f\,|d\zeta|$;
on side $s$, $\nu(\zeta)=\nu_s$ and $|d\zeta|=e^{-i\theta_s}d\zeta$, so
$\int_s\frac{\nu_s}{\zeta-z}f\,|d\zeta|
=\nu_s e^{-i\theta_s}\int_s\frac{f}{\zeta-z}d\zeta
=\nu_s e^{-i\theta_s}(2\pi i)\Cau_s f$, whose boundary value gives the first
formula in \eqref{eq:K-as-cauchy}. For $K_0'$, $\nu(z)$ is independent of
$\zeta$ and pulls out of the integral, leaving the per-side sum
$\sum_s e^{-i\theta_s}(2\pi i)\Cau_s f$, which gives the second formula. For
complex $f$, split
$\Real\frac{\nu}{\zeta-z}=\tfrac12(\frac{\nu}{\zeta-z}
+\frac{\bar\nu}{\overline{\zeta-z}})$; the first half yields $\Cau_{\Gw}$-type
boundary values and the second
$\overline{\Cau_{\Gw}\bar f}$-type, with side-constant coefficients.
\end{proof}

\begin{remark}
On a single straight side the self-interaction double-layer kernel vanishes,
since $\nu_s\perp(\zeta-z)$ for $\zeta,z$ collinear makes
$\Real\frac{\nu_s}{\zeta-z}=0$. The singular content of $K_0,K_0'$ at the vertex
is therefore the cross-side coupling, which is exactly the side-2 Cauchy
boundary value evaluated at the rotated argument $e^{-i\theta}z$---the corner
interaction the finite-sheeted calculus resolves.
\end{remark}

\subsection{Local decomposition at a rational vertex}
Let $\Gamma$ be piecewise analytic near a vertex $v$ with rational opening angle
$\theta_v=p_v\pi/q_v$. After translating $v$ to $0$, rotating, and rescaling, a
neighbourhood of $v$ is identified to leading order with a rational wedge
$\Gamma_{\theta_v,c_v}$; let $\sigma$ restrict to side densities lying in
conormal weighted spaces satisfying Assumption~\ref{ass:integrable} with angle
$\theta_v$. The Helmholtz layer operators are
$\mathcal S_k\sigma=\int_\Gamma\Phi_k\sigma\,ds$,
$\mathcal D_k\sigma=\int_\Gamma\partial_{n(y)}\Phi_k\,\sigma\,ds$,
$\mathcal D_k'\sigma=\int_\Gamma\partial_{n(x)}\Phi_k\,\sigma\,ds$.

\begin{theorem}[Local singular decomposition, kernel level]\label{thm:helmholtz}
Under the above hypotheses there is a neighbourhood of $v$ on which the
following identities of kernels hold:
\begin{equation}\label{eq:helmholtz-decomp}
  \mathcal S_k\sigma=-\tfrac{1}{2\pi}S_0\sigma+\mathcal R_{S,k}\sigma,\quad
  \mathcal D_k\sigma=-\tfrac{1}{2\pi}K_0\sigma+\mathcal R_{D,k}\sigma,\quad
  \mathcal D_k'\sigma=-\tfrac{1}{2\pi}K_0'\sigma+\mathcal R_{D',k}\sigma,
\end{equation}
where $S_0,K_0,K_0'$ are the rational-wedge model operators of
Proposition~\ref{prop:potential-algebra} (hence finite-sheeted Cauchy/logarithmic
boundary operators), and:
\begin{enumerate}
\item the single-layer remainder kernel is
$-\tfrac{1}{2\pi}(A_k-1)\log|x-y|+B_k=O(|x-y|^{2}\log|x-y|)+\text{analytic}$,
hence continuous across the diagonal;
\item the differentiated remainders $\mathcal R_{D,k},\mathcal R_{D',k}$ are
weakly singular (locally integrable), one order less singular than the
Cauchy-type corner kernels; and they are analytic away from the diagonal,
analytic in the kernel's analytic part when $\Gamma$ is analytic near $v$.
\end{enumerate}
The statement is at the level of kernel singularity: we identify the singular
part of each operator with a finite-sheeted wedge model and bound the order of
the remainder kernel, but we do not assert mapping or boundedness estimates for
the remainders on specified function spaces, in keeping with the caution of the
closing remark.
\end{theorem}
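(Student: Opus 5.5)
The plan is to work entirely at the level of kernels, using the local expansion \eqref{eq:phi-local} of $\Phi_k$, and to split each layer kernel into its leading Laplace part plus a remainder. The leading part is then identified with the wedge model operators of Proposition~\ref{prop:potential-algebra}.

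\emph{Single layer.} Inserting \eqref{eq:phi-local} gives
\[
  \Phi_k(x,y)=-\tfrac{1}{2\pi}\log|x-y|-\tfrac{1}{2\pi}\bigl(A_k(|x-y|^{2})-1\bigr)\log|x-y|+B_k(|x-y|^{2}).
\]
\begin{itemize}
\item Since $A_k$ is analytic with $A_k(0)=1$, we have $A_k(t)-1=t\,\tilde A_k(t)$ with $\tilde A_k$ analytic.
\item The middle term is therefore $|x-y|^{2}\tilde A_k(|x-y|^{2})\log|x-y|=O(|x-y|^{2}\log|x-y|)$, which is continuous and vanishes on the diagonal.
\item $B_k(|x-y|^{2})$ is analytic in $(x,y)$ because $|x-y|^{2}$ is a real polynomial.
\end{itemize}
Integrating against $\sigma\,ds$ then gives $\mathcal S_k\sigma=-\tfrac1{2\pi}S_0\sigma+\mathcal R_{S,k}\sigma$ with the stated remainder kernel. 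This first holds with $\Gamma$ replaced by the straight wedge $\Gamma_{\theta_v,c_v}$ after translation, rotation and rescaling (isometries and dilations preserve the kernel up to an additive constant times $\log$ of the scale, which is analytic and goes into $B_k$).

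\emph{Double layers.} Differentiate the same splitting in $n(y)$ and in $n(x)$.
\begin{itemize}
\item The leading term $-\tfrac1{2\pi}\partial_n\log|x-y|$ is, by \eqref{eq:normal-kernels}, exactly the kernel of $K_0$ or $K_0'$. This identifies it with a real combination of boundary values of $\Cau_{\Gw}$ and $\overline{\Cau_{\Gw}\bar f}$, as in \eqref{eq:K-as-cauchy}.
\item For the remainder, $\partial_n$ applied to $|x-y|^{2}\tilde A_k\log|x-y|$ gives terms of size $O(|x-y|\log|x-y|)$, since $\partial_n|x-y|^{2}=O(|x-y|)$ and $\partial_n\log|x-y|=O(|x-y|^{-1})$.
\item Differentiating $B_k(|x-y|^{2})$ leaves an analytic kernel.
\end{itemize}
So the remainder kernels are bounded, in particular locally integrable, and at least one order less singular than the $|x-y|^{-1}$ Cauchy-type kernel. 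Analyticity off the diagonal follows because $\Phi_k$ is real-analytic off the diagonal and the normal field is analytic on each analytic piece.

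\emph{Curvature of the true sides.} This is the main obstacle, since near $v$ the arcs of $\Gamma$ are analytic but not straight, while $S_0,K_0,K_0'$ live on the straight wedge. I would parametrize each side as $y=\gamma_s(r)=re^{i\theta_s}+O(r^{2})$ with $\gamma_s$ analytic, and compare kernels pulled back to the side parameters.
\begin{itemize}
\item For the logarithm, $\log|\gamma_s(r)-x|-\log|re^{i\theta_s}-x_0|$ is analytic or weakly singular of order $O(|x-y|\log)$.
\item For the double-layer kernel on a single curved side, the classical fact that $\partial_{n(y)}\log|x-y|=O(\kappa)$ is bounded (the curvature cancellation) shows that the self-interaction difference is bounded.
\item For cross-side interaction, the kernel differs from the straight-wedge kernel by $O(1)$ uniformly, as $\gamma_s(r)-re^{i\theta_s}=O(r^{2})$ and the denominator is comparable to $r+|x|$ by transversality at the corner, $\theta_v\neq0,2\pi$.
\end{itemize}
These differences are absorbed into the remainders, which are claimed only at the kernel level, consistent with the statement's disclaimer. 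I expect the cross-side estimate $|x-y|\gtrsim r+|x|$, together with the $O(r^{2})$ deviation, to be the step requiring the most care; I would verify it first via the law of cosines for the opening angle $\theta_v$.
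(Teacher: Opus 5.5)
Your proposal is correct and follows the paper's own route: insert \eqref{eq:phi-local}, identify the Laplace part $-\tfrac{1}{2\pi}\log|x-y|$ and its normal derivatives with $S_0,K_0,K_0'$ via \eqref{eq:normal-kernels}, and observe that one normal derivative of the $O(|x-y|^{2}\log|x-y|)$ factor and of the analytic $B_k$ term leaves a weakly singular remainder. Your sharper count of that remainder, $O(|x-y|\log|x-y|)$ and hence bounded, is right.

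The curvature comparison goes beyond the paper, whose proof simply works ``in local wedge coordinates'' with a piecewise-constant normal, and your estimates for it are sound. Note, however, that it adds to $\mathcal R_{S,k}$ a cross-side term of size $O(r+t)$, which is Lipschitz but generically not analytic at the vertex. The single-layer remainder therefore stays continuous across the diagonal, but it is not literally the kernel $-\tfrac{1}{2\pi}(A_k-1)\log|x-y|+B_k$ of item~(1) unless the sides are straight near $v$.
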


\begin{proof}
Insert \eqref{eq:phi-local} into $\mathcal S_k$. The leading piece
$-\tfrac{1}{2\pi}\int_\Gamma\log|x-y|\sigma\,ds$ is, in local wedge coordinates,
$-\tfrac{1}{2\pi}S_0\sigma=-\tfrac{1}{2\pi}\Lop_{\Gamma_{\theta_v,c_v}}\sigma$,
whose finite-sheeted structure and boundary trace are
Theorem~\ref{thm:log} and Proposition~\ref{prop:realpart}. The remaining kernel
$-\tfrac{1}{2\pi}(A_k-1)\log|x-y|+B_k$ is $O(|x-y|^{2}\log|x-y|)$ plus analytic,
which is (1). For $\mathcal D_k,\mathcal D_k'$, the normal at $v$ is piecewise
constant, so by \eqref{eq:normal-kernels} the leading normal-derivative kernels
are $\pm\Real\frac{\nu}{\zeta-z}$, i.e. $-\tfrac{1}{2\pi}K_0,-\tfrac{1}{2\pi}K_0'$
in the normalization of \eqref{eq:phi-local} after the factor
$A_k(0)=1$; Proposition~\ref{prop:potential-algebra} identifies these as
finite-sheeted Cauchy boundary operators. One normal derivative of the smoother
factor $(A_k-1)\log|x-y|+B_k$ lowers the order by one, leaving a weakly singular
remainder, which is (2).
\end{proof}

\subsection{Comparison with classical corner treatments}
Classical boundary integral treatments of nonsmooth domains handle corners
numerically, through mesh grading near vertices, specialized singular
quadrature, or local asymptotic enrichment; Kress's surveys of time-harmonic
acoustic scattering and linear integral equations are standard references for
the formulations and the graded-mesh handling of corners
\cite{kress1991boundary}. The present results are of a different character. They
do not propose a quadrature rule or a discretization. Instead they give an exact
local symbolic calculus: at a rational corner the singular part of each layer
operator is identified, modulo a strictly smoother remainder, with a
finite-sheeted rational-wedge model operator whose branch structure and corner
asymptotics are explicit (Theorems~\ref{thm:propagation}, \ref{thm:helmholtz}).
This is a local operator factorization, complementary to---and insertable
into---existing smooth-panel or Nystr\"om frameworks, rather than a substitute
for them. It is also complementary to the functional-analytic theory of layer
potentials on Lipschitz and nonsmooth domains
\cite{buffa2006acoustic,ammari2009layer,gao1991layer,kenig2011layer}, which provides the mapping
and invertibility framework on global spaces but not the explicit local
symbolic form of the singular part that the wedge calculus supplies. The
closest precedent is the Mellin-transform treatment of the double-layer
potential on polygons \cite{costabel1983normal}: there the corner enters
through a Mellin symbol, whereas here the finite covering $w=\zeta^{q}$
linearizes the angular monodromy and the singular part is read off the
finite-sheeted model directly.

\section{Worked examples}
\label{sec:examples}

We illustrate the factorization and the propagation calculus on the smallest
nontrivial covering, $q=2$, and check the resonance rule directly against exact
interval-transform formulae.

\subsection{The right angle \texorpdfstring{$\theta=\pi/2$}{theta=pi/2}}
Here $\theta=p\pi/q$ with $p=1$, $q=2$, so $(-1)^{p}=-1$ and
$e^{-i\theta}=-i$. The lifted intervals are $(0,1)$ and $(0,c^{2})$, the
evaluation points $z^{2}$ and $-z^{2}$, and the lifted densities
\eqref{eq:gjm} are $g_{j,0}(s)=f_j(s^{1/2})s^{-1/2}$ and
$g_{j,1}(s)=f_j(s^{1/2})$. The factorization \eqref{eq:main} reads
\begin{equation}\label{eq:rightangle}
  \Cau_{\Gamma_{\pi/2,c}}f(z)
  =\tfrac12\Bigl[z\,\Cau_{(0,1)}[g_{1,0}](z^{2})+\Cau_{(0,1)}[g_{1,1}](z^{2})\Bigr]
  +\tfrac12\Bigl[(-iz)\,\Cau_{(0,c^{2})}[g_{2,0}](-z^{2})
     +\Cau_{(0,c^{2})}[g_{2,1}](-z^{2})\Bigr].
\end{equation}
The two-sheeted structure is explicit: each side contributes one ``even'' block
($m=1$, no prefactor) and one ``odd'' block ($m=0$, prefactor linear in $z$),
both evaluated at the squared variable.

\subsection{A resonant smooth density produces a logarithm}
Take $f_1\equiv1$ on the first side and $f_2\equiv0$; the exponent is
$\alpha_{1,0}=0\in\Z_{\ge0}$, a resonance with $h=0$. Directly,
\begin{equation}\label{eq:const-exact}
  \Cau_{(0,1)}[1](z)
  =\frac{1}{2\pi i}\int_0^1\frac{dr}{r-z}
  =\frac{1}{2\pi i}\bigl(\log(1-z)-\log(-z)\bigr),
\end{equation}
which is exact. As $z\to0$, $\log(1-z)$ is analytic, so
\[
  \Cau_{\Gamma_{\pi/2,c}}f(z)=\Cau_{(0,1)}[1](z)
  =\frac{-1}{2\pi i}\,\Log(-z)+\bigl(\text{analytic}\bigr).
\]
This matches Lemma~\ref{lem:polyhom-model}(2) with $n_0=0$, $h=0$, whose leading
coefficient is $\frac{-1}{2\pi i\,(h+1)}=\frac{-1}{2\pi i}$. A perfectly smooth
density on the wedge thus produces a logarithmic corner singularity: the
resonance is geometric, generated by the integer exponent $\alpha=0$ meeting the
vertex, not by any singularity of the data.

\subsection{Logarithmic data raises the power}
Take $f_1(r)=\log r$, so $\alpha_{1,0}=0$ and $h=1$. By
Lemma~\ref{lem:polyhom-model}(2) the top logarithmic power rises to $h+1=2$,
with leading coefficient $\frac{-1}{2\pi i\,(h+1)}=\frac{-1}{4\pi i}$:
\[
  \Cau_{(0,1)}[\log r](z)\sim\frac{-1}{4\pi i}\,\bigl(\Log(-z)\bigr)^{2}
   +O\bigl(\Log(-z)\bigr).
\]
One verifies this from $\Cau_{(0,1)}[\log r]=\partial_\alpha
\Cau_{(0,1)}[r^{\alpha}]\big|_{\alpha=0}$: in the notation of the proof of
Lemma~\ref{lem:polyhom-model}, with $n_0=0$, $a=1$ (so $\ln a=0$), the resonant
pair is $G=\frac{-1}{2\pi i}\frac{e^{\eta L}-1}{\eta}+\cdots
=\frac{-1}{2\pi i}\bigl(L+\tfrac{\eta}{2}L^{2}+\cdots\bigr)$, and
$\partial_\alpha G|_{\alpha=0}=\frac{-1}{4\pi i}L^{2}+\cdots$ with
$L=\Log(-z)$. This is the log-power increase $h\mapsto h+1$ in its simplest instance.

\subsection{A non-resonant power gives a clean algebraic mode}
Take $f_1(r)=r^{-1/2}$ (so $\Real\alpha>-1$), $\alpha_{1,0}=-\tfrac12
\notin\Z_{\ge0}$, $h=0$. By Lemma~\ref{lem:model},
\[
  \Cau_{(0,1)}[r^{-1/2}](z)\sim m(-\tfrac12)(-z)^{-1/2}+(\text{analytic}),
  \qquad
  m(-\tfrac12)=\frac{1}{2i\sin(\pi/2)}=\frac{1}{2i},
\]
a purely algebraic corner mode with no logarithm, in contrast to the resonant
cases above. The next term, from the regular series of \eqref{eq:model}, is the
constant $\frac{1}{2\pi i}\frac{1}{\alpha}=\frac{i}{\pi}$ at $n=0$.

\subsection{Higher coverings}
For $q=3$ and $\theta=2\pi/3$ ($p=2$, even, $(-1)^{p}=+1$) the transform is a
sum of three blocks per side, on the intervals $(0,1)$ and $(0,c^{3})$, all
evaluated at $z^{3}$, with prefactors $z^{2},z,1$ on side $1$ and
$(e^{-2\pi i/3}z)^{2},(e^{-2\pi i/3}z),1$ on side $2$; the lifted exponents are
$\beta_{j,m}=(\alpha_j+m+1)/3-1$, $m=0,1,2$. A density mode $r^{\alpha_j}$ splits
into three sheet modes of exponents $(\alpha_j+1)/3-1$, $(\alpha_j+2)/3-1$,
$\alpha_j/3$, which recombine to $z^{\alpha_j}$ by the identity in the remark
following Theorem~\ref{thm:propagation}. The mechanism is identical to the
right-angle case, with $q=3$ sheets in place of $2$.

\section{Localization on rational polygons}
\label{sec:localization}

The calculus of \S\S\ref{sec:factor}--\ref{sec:helmholtz} is local to one
vertex. We now show that it assembles, with no new singular analysis, into a
global decomposition of the Cauchy operator on a piecewise analytic curve whose
corners are rational. Two structural facts drive the assembly: the Cauchy kernel
is conjugated \emph{exactly} by affine maps, and it is conjugated up to a
\emph{smoothing} (analytic-kernel) remainder by the analytic flattening of a
curved analytic side to its tangent ray. Separated vertex supports then make all
inter-vertex coupling analytic.

\subsection{Analytic corner charts and cutoffs}
Let $\Gamma\subset\C$ be a piecewise analytic Jordan curve with finite vertex
set $V(\Gamma)$, each interior opening angle a rational multiple of $\pi$,
$\theta_v=p_v\pi/q_v$, $(p_v,q_v)=1$. Near $v$, $\Gamma$ is the union of two
analytic arcs $\gamma_{v,1},\gamma_{v,2}$ issuing from $v$ with distinct unit
tangents. Fix a partition of unity
\begin{equation}\label{eq:pou}
  1=\chi_0+\sum_{v\in V(\Gamma)}\chi_v,
\end{equation}
where $\chi_v\in C^\infty(\Gamma)$ is supported in a vertex neighbourhood $U_v$
containing no other vertex, the $U_v$ are pairwise disjoint, and
$\chi_0=1-\sum_v\chi_v$ is supported away from all vertices on the analytic part
of $\Gamma$. For each $v$ let
\begin{equation}\label{eq:affine}
  A_v(z)=\lambda_v e^{-i\phi_v}(z-v)
\end{equation}
be the affine normalization sending $v\mapsto0$ and the first incident tangent
to the positive real axis, so that the two tangent rays of $A_v\gamma_{v,1},
A_v\gamma_{v,2}$ are the sides of the model wedge $\Gamma_{\theta_v,c_v}$ for a
suitable $c_v>0$. Let $\sigma_v$ denote the side-wise analytic flattening map:
on each side it is the biholomorphism, defined near $0$ with $\sigma_v(0)=0$,
$\sigma_v'(0)=1$, carrying the straight model side onto the (affinely
normalized) analytic arc $A_v\gamma_{v,j}$. Figure~\ref{fig:boundary_localization} illustrates the localization scheme for a curvilinear polygon, where cutoff regions reduce the global boundary analysis to local vertex models.

\begin{figure}[htbp]
    \centering
    \includegraphics[width=0.75\textwidth]{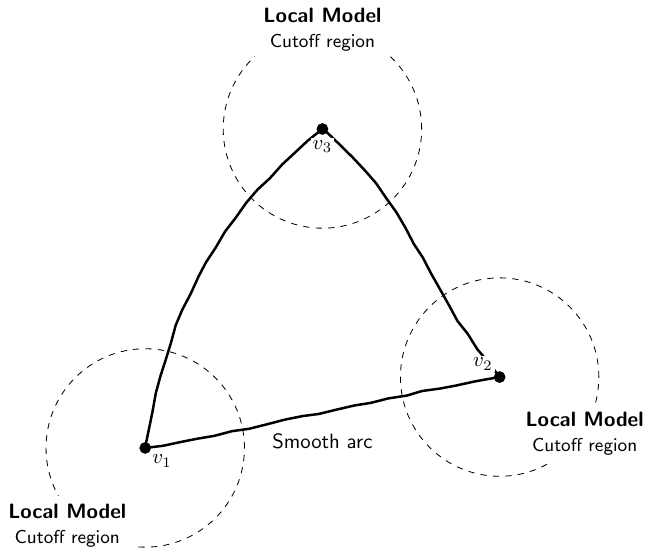}
    \caption{Localization scheme for a curvilinear polygon. Dashed circles indicate the cutoff regions near the vertices $v_j$, isolating the local wedge singularities from the smooth boundary arcs.}
    \label{fig:boundary_localization}
\end{figure}

\subsection{Transport of the wedge model}
The next two lemmas are the engine of the localization.

\begin{lemma}[Exact affine conjugation]\label{lem:affine}
For any integrable density $h$ on $\Gamma$ and any affine
$A(z)=\lambda e^{-i\phi}(z-v)$,
\[
  \Cau_{\Gamma}[h]=\bigl(\Cau_{A\Gamma}[h\circ A^{-1}]\bigr)\circ A
  \qquad\text{on }\C\setminus\Gamma .
\]
The conjugation is exact: no remainder arises.
\end{lemma}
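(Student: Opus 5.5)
The identity is a direct change of variables in the defining integral \eqref{eq:cauchy-def}. We use only that $A$ is an affine bijection of $\C$ with derivative the nonzero constant $\lambda e^{-i\phi}$. We read $\lambda>0$ and $\phi\in\R$ as in \eqref{eq:affine}, although any $\lambda e^{-i\phi}\neq0$ works. The key fact is that $A$ distorts the Cauchy kernel and the complex line element $d\zeta$ by the same constant factor, so the two cancel.

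Fix $z\in\C\setminus\Gamma$. Since $A$ is a bijection carrying $\Gamma$ onto $A\Gamma$, we have $w:=A(z)\in\C\setminus A\Gamma$, so the right-hand side is defined at $z$. Parametrize $A\Gamma$ by $\eta=A(\zeta)$ with $\zeta\in\Gamma$, carrying the orientation of $\Gamma$ forward by $A$; this is the orientation implicit in writing $\Cau_{A\Gamma}$. Then $d\eta=\lambda e^{-i\phi}\,d\zeta$ as complex differentials along the curve, and
\[
  \eta-w=A(\zeta)-A(z)=\lambda e^{-i\phi}(\zeta-z).
\]
Substituting into $\frac{1}{2\pi i}\int_{A\Gamma}\frac{h(A^{-1}\eta)}{\eta-w}\,d\eta$, the factor $\lambda e^{-i\phi}$ appears in both the numerator and the denominator and cancels. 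What remains is exactly $\frac{1}{2\pi i}\int_\Gamma\frac{h(\zeta)}{\zeta-z}\,d\zeta=\Cau_\Gamma[h](z)$.

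Two routine points need checking. First, integrability transfers: $|d\eta|=\lambda|d\zeta|$, so $h\circ A^{-1}\in L^1(A\Gamma,|d\eta|)$ exactly when $h\in L^1(\Gamma,|d\zeta|)$. Second, the substitution is legitimate for a rectifiable, piecewise analytic curve, because $A$ is smooth with constant derivative. Both sides are holomorphic on $\C\setminus\Gamma$ by Proposition~\ref{prop:interval-mapping}(1) applied sidewise, or simply by differentiation under the integral, but this is not needed since the identity holds pointwise.

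There is no real obstacle here. The only step that needs care is the orientation convention: the pushforward orientation must be used on $A\Gamma$, otherwise a sign appears. It is also worth recording that the cancellation depends on the kernel being homogeneous of degree $-1$ and $d\zeta$ being homogeneous of degree $+1$ under complex scaling. This is why the conjugation is exact for affine maps but leaves a remainder for the nonlinear flattening maps $\sigma_v$.
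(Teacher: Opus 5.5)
Your proposal is correct and follows the paper's proof: in both, the substitution $\eta=A(\zeta)$ makes the constant factor $\lambda e^{-i\phi}$ appear in both $d\eta$ and $\eta-A(z)$, so it cancels. Your checks on the pushforward orientation and on the transfer of integrability are routine points that the paper leaves implicit.
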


\begin{proof}
With $w=A(z)$, $\eta=A(\zeta)$, one has $A^{-1}(\eta)=v+\lambda^{-1}e^{i\phi}\eta$,
so $\zeta-z=\lambda^{-1}e^{i\phi}(\eta-w)$ and $d\zeta=\lambda^{-1}e^{i\phi}d\eta$.
The factors cancel in $\frac{h(\zeta)\,d\zeta}{\zeta-z}
=\frac{(h\circ A^{-1})(\eta)}{\eta-w}\,d\eta$, giving the claim.
\end{proof}

\begin{lemma}[Analytic flattening with smoothing remainder]\label{lem:flatten}
Let $\ell$ be a ray from $0$, $\sigma$ a biholomorphism near $0$ with
$\sigma(0)=0$, $\sigma'(0)\neq0$, and $\tilde\gamma=\sigma(\ell)$ the
corresponding analytic arc. For $g$ integrable on $\tilde\gamma$ and $w_0$ near
$0$,
\[
  \Cau_{\tilde\gamma}[g]\bigl(\sigma(w_0)\bigr)
  =\Cau_{\ell}\bigl[(g\circ\sigma)\,\sigma'\bigr](w_0)
   +\mathcal K_\sigma[g](w_0),
\]
where $\mathcal K_\sigma[g](w_0)=\frac{1}{2\pi i}\int_\ell
(g\circ\sigma)(\tau)\sigma'(\tau)K_\sigma(\tau,w_0)\,d\tau$ has kernel
$K_\sigma$ analytic in a neighbourhood of $(0,0)$, with diagonal value
$K_\sigma(w_0,w_0)=\sigma''(w_0)/(2\sigma'(w_0))$. Hence $\mathcal K_\sigma$ maps
$g$ to a function analytic at $0$: it is smoothing across the vertex.
\end{lemma}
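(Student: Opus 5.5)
The approach is a change of variables followed by a divided-difference expansion of the Cauchy kernel. Parametrize $\tilde\gamma$ by $\zeta=\sigma(\tau)$ with $\tau\in\ell$ near $0$, so that $d\zeta=\sigma'(\tau)\,d\tau$. With $z=\sigma(w_0)$ this gives
\[
  \Cau_{\tilde\gamma}[g](\sigma(w_0))
  =\frac{1}{2\pi i}\int_\ell\frac{(g\circ\sigma)(\tau)\,\sigma'(\tau)}{\sigma(\tau)-\sigma(w_0)}\,d\tau .
\]
We then split the kernel as
\[
  \frac{1}{\sigma(\tau)-\sigma(w_0)}=\frac{1}{\tau-w_0}+K_\sigma(\tau,w_0),
  \qquad
  K_\sigma(\tau,w)=\frac{1}{\sigma(\tau)-\sigma(w)}-\frac{1}{\tau-w}.
\]
The first term of the split reproduces $\Cau_\ell[(g\circ\sigma)\sigma']$ exactly. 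The second term is $\mathcal K_\sigma$ by definition, with the stated kernel.

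The central step is to show that $K_\sigma$ is analytic near $(0,0)$, including across the diagonal. Write $\sigma(\tau)-\sigma(w)=(\tau-w)\,D(\tau,w)$, where
\[
  D(\tau,w)=\int_0^1\sigma'\bigl(w+t(\tau-w)\bigr)\,dt .
\]
$D$ is analytic in both variables on a small bidisc and satisfies $D(w,w)=\sigma'(w)$, which is nonzero near $0$ because $\sigma'(0)\neq0$. Shrinking the bidisc so that $D$ has no zeros, we obtain
\[
  K_\sigma(\tau,w)=\frac{1-D(\tau,w)}{(\tau-w)\,D(\tau,w)} .
\]
The numerator $1-D$ vanishes only where $D=1$, so this formula alone does not remove the singularity. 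The right move is to note that $D(\tau,w)-D(w,w)$ vanishes on $\tau=w$, and to expand in $\tau-w$. Writing $s=\tau-w$,
\[
  \sigma(\tau)-\sigma(w)=\sigma'(w)\,s+\tfrac12\sigma''(w)\,s^{2}+O(s^{3}),
\]
so that
\[
  \frac{1}{\sigma(\tau)-\sigma(w)}
  =\frac{1}{\sigma'(w)\,s}\Bigl(1-\frac{\sigma''(w)}{2\sigma'(w)}\,s+O(s^{2})\Bigr).
\]
The kernel therefore has a simple pole on the diagonal with residue $1/\sigma'(w)$, which equals $1$ only when $\sigma'\equiv1$.

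This is the main obstacle: as literally written, with the bare $\Cau_\ell[(g\circ\sigma)\sigma']$ and no $1/\sigma'$ correction, the splitting leaves the singular remainder $\bigl(1/\sigma'(w)-1\bigr)/(\tau-w)$ whenever $\sigma'(w_0)\neq1$. I would handle it as follows.

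First I would use the normalization of the corner chart, $\sigma'(0)=1$, and check whether the statement is intended for that situation. Even so, $\sigma'(w)\neq1$ for $w\neq0$, so the pole persists off $w=0$. The exact identity I would therefore aim for is
\[
  \frac{\sigma'(\tau)}{\sigma(\tau)-\sigma(w)}=\frac{1}{\tau-w}+\widehat K(\tau,w),
\]
in which $\sigma'(\tau)$ is kept inside the comparison. Expanding $\sigma'(\tau)=\sigma'(w)+\sigma''(w)\,s+O(s^{2})$, the pole of the left side has residue exactly $1$, so $\widehat K$ is analytic on the bidisc. Its diagonal value is
\[
  \widehat K(w,w)=\frac{\sigma''(w)}{\sigma'(w)}-\frac{\sigma''(w)}{2\sigma'(w)}=\frac{\sigma''(w)}{2\sigma'(w)},
\]
which matches the stated diagonal value. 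This confirms that the intended reading is the one where $\sigma'(\tau)$ is grouped with the kernel when the singular part is compared. I would present the proof this way, with the density written accordingly ($g\circ\sigma$ against the kernel $\sigma'(\tau)/(\sigma(\tau)-\sigma(w_0))$), and note the identification with the statement's notation.

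Finally, since $\widehat K$ is analytic in $w_0$ on a fixed disc, uniformly for $\tau$ in a compact piece of $\ell$ near $0$, and $g\circ\sigma$ is integrable, differentiation under the integral (or Morera's theorem) shows that $\mathcal K_\sigma[g]$ is holomorphic in $w_0$ near $0$. Because the kernel has no singularity at $\tau=w_0$, this holds across the ray itself, which gives the smoothing claim at the vertex.
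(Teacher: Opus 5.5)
Your final argument matches the paper's proof. Like you, it parametrizes by $\sigma(\tau)$ and splits $\sigma'(\tau)/(\sigma(\tau)-\sigma(w_0))=1/(\tau-w_0)+K_\sigma(\tau,w_0)$ with $\sigma'(\tau)$ kept inside the kernel; it then reads off residue $1$ and the diagonal value $\sigma''(w_0)/(2\sigma'(w_0))$ from the Taylor expansion and gets holomorphy of the remainder from the analytic kernel.

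Your diagnosis of the statement is also correct, and you should state it outright rather than as an ``identification with the statement's notation''. With this splitting the main term is $\Cau_\ell[g\circ\sigma](w_0)$ and the remainder integrand has no factor $\sigma'(\tau)$. The paper's closing sentence calling the first term $\Cau_\ell[(g\circ\sigma)\sigma']$ is a slip. The identity as printed is false unless $\sigma'\equiv1$: the discrepancy $\Cau_\ell[(g\circ\sigma)(\sigma'-1)](w_0)$ has jump $(g\circ\sigma)(\sigma'-1)$ across $\ell$, which is nonzero for instance when $g\circ\sigma\equiv1$. So the discrepancy is not analytic at the vertex, even when $\sigma'(0)=1$.
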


\begin{proof}
Parametrize $\eta=\sigma(\tau)$, $\tau\in\ell$, and set the evaluation point
$\sigma(w_0)$. Then
$\Cau_{\tilde\gamma}[g](\sigma(w_0))=\frac{1}{2\pi i}\int_\ell
\frac{g(\sigma(\tau))\sigma'(\tau)}{\sigma(\tau)-\sigma(w_0)}\,d\tau$. Writing
$\frac{\sigma'(\tau)}{\sigma(\tau)-\sigma(w_0)}=\frac{1}{\tau-w_0}
+K_\sigma(\tau,w_0)$ and using
$\sigma(\tau)-\sigma(w_0)=\sigma'(w_0)(\tau-w_0)\bigl(1+\tfrac{\sigma''(w_0)}
{2\sigma'(w_0)}(\tau-w_0)+\cdots\bigr)$ shows the simple pole at $\tau=w_0$ has
residue $1$, so $K_\sigma$ extends analytically across the diagonal with
$K_\sigma(w_0,w_0)=\sigma''(w_0)/(2\sigma'(w_0))$; $K_\sigma$ is analytic wherever
$\sigma$ is analytic and $\sigma'\neq0$, i.e. near $(0,0)$. The first term is the
flat-ray Cauchy transform of the transported density
$(g\circ\sigma)\sigma'$. The remainder $\mathcal K_\sigma[g]$ has an analytic
kernel, so it is a holomorphic function of $w_0$ near $0$.
\end{proof}

\begin{proposition}[Vertex model with analytic remainder]\label{prop:vertex-model}
Define the vertex model operator
\[
  \Cau_v[f]:=\bigl(\Cau_{\Gamma_{\theta_v,c_v}}\bigl[\Psi_v(\chi_v f)\bigr]\bigr)
              \circ\bigl(\sigma_v^{-1}\!\circ A_v\bigr),
  \qquad
  \Psi_v(\chi_v f)=\bigl((\chi_v f)\circ A_v^{-1}\circ\sigma_v\bigr)\,\sigma_v',
\]
where $\Psi_v$ transports the localized density onto the straight model sides.
Then near $v$,
\[
  \Cau_\Gamma[\chi_v f]=\Cau_v[f]+(\text{analytic at }v).
\]
\end{proposition}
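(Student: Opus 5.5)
The plan is to apply Lemma~\ref{lem:affine} first and Lemma~\ref{lem:flatten} second, one side at a time. The coupling between the two flattenings is then handled by a separated-support argument.

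\emph{Step 1 (affine normalization).} By Lemma~\ref{lem:affine} with $A=A_v$,
\[
  \Cau_\Gamma[\chi_v f]=\bigl(\Cau_{A_v\Gamma}[(\chi_v f)\circ A_v^{-1}]\bigr)\circ A_v .
\]
There is no remainder. Since $\operatorname{supp}\chi_v\subset U_v$, the integral runs only over the two normalized analytic arcs $\tilde\gamma_j=A_v\gamma_{v,j}$, $j=1,2$, near $0$. So it splits as a sum of two single-arc transforms $\Cau_{\tilde\gamma_j}[g_j]$, where $g_j$ is the restriction of $(\chi_v f)\circ A_v^{-1}$ to $\tilde\gamma_j$.

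\emph{Step 2 (flatten each arc).} Write the evaluation point as $A_v(z)=\sigma_v(w_0)$, with $w_0=\sigma_v^{-1}(A_v z)$. For each $j$, Lemma~\ref{lem:flatten} with $\sigma=\sigma_{v,j}$ gives
\[
  \Cau_{\tilde\gamma_j}[g_j](\sigma_{v,j}(w_0))
  =\Cau_{\ell_j}[(g_j\circ\sigma_{v,j})\sigma_{v,j}']\,(w_0)+\mathcal K_{\sigma_{v,j}}[g_j](w_0).
\]
Here $\ell_j$ is the $j$-th straight model side. The second term is analytic in $w_0$ near $0$. Summing over $j$, the first terms give exactly $\Cau_{\Gamma_{\theta_v,c_v}}[\Psi_v(\chi_v f)](w_0)$. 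This uses Proposition~\ref{prop:reduction} and the side-by-side definition of the wedge transform; $c_v$ is chosen, possibly after shrinking $U_v$, so that the support of $\Psi_v(\chi_v f)$ lies in the model sides.

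\emph{Step 3 (main obstacle: a single evaluation chart).} The map $\sigma_v$ is side-wise, with one map $\sigma_{v,1}$ and another $\sigma_{v,2}$. But $\Cau_v$ evaluates through one map $\sigma_v^{-1}$. At the point $\sigma_v^{-1}(A_vz)$, the side-$2$ term is evaluated at $\sigma_{v,1}^{-1}(\cdot)$ (or some fixed chart) rather than at $\sigma_{v,2}^{-1}(\cdot)$. I would handle this as follows.
\begin{enumerate}
\item Define the evaluation chart as one fixed biholomorphism $\sigma_v$ near $0$ with $\sigma_v'(0)=1$; a natural choice is the one that flattens side $1$.
\item For side $2$, write the discrepancy as $\Cau_{\ell_2}[\phi_2](\sigma_{v,2}^{-1}\sigma_v w_0)-\Cau_{\ell_2}[\phi_2](w_0)$, where $\phi_2=(g_2\circ\sigma_{v,2})\sigma_{v,2}'$.
\item Control this difference by the kernel identity
\[
  \frac{1}{\tau-\rho(w_0)}-\frac{1}{\tau-w_0},\qquad \rho=\sigma_{v,2}^{-1}\circ\sigma_v .
\]
\end{enumerate}
Here $\rho$ is a biholomorphism with $\rho(0)=0$ and $\rho'(0)=1$, and it is not the identity. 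The difference of the two Cauchy transforms is therefore not analytic in general: it retains vertex modes. This is exactly why the statement must be read with $\sigma_v$ acting side-wise. The correct interpretation, which I would adopt explicitly, is that the vertex model is defined blockwise. The side-$j$ block of $\Cau_{\Gamma_{\theta_v,c_v}}$ is composed with $\sigma_{v,j}^{-1}\circ A_v$, consistent with Proposition~\ref{prop:reduction} writing the wedge transform as a sum of side blocks. Under this reading, Step 2 already closes the argument, with remainder $\sum_j\mathcal K_{\sigma_{v,j}}[g_j]\circ\sigma_{v,j}^{-1}\circ A_v$. Each summand is a composition of functions analytic near $0$ with biholomorphisms fixing $0$, hence analytic at $v$.

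\emph{Step 4 (cutoff tail).} The kernel $K_\sigma$ in Lemma~\ref{lem:flatten} is analytic only near $(0,0)$. If $\operatorname{supp}\chi_v$ extends beyond that neighbourhood, split $\chi_v f$ with a further cutoff. The piece supported at positive distance from $v$ has a Cauchy transform analytic near $v$, by differentiation under the integral as in Proposition~\ref{prop:interval-mapping}(1). The corresponding piece of the model transform is analytic near $0$ for the same reason. Both pieces are absorbed into the analytic remainder, and Assumption~\ref{ass:integrable} guarantees that all integrals converge at the vertex.
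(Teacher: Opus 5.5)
Your skeleton is the paper's own proof: Lemma~\ref{lem:affine} with no remainder, then Lemma~\ref{lem:flatten} applied separately on each incident arc, with analytic flattening remainders. Your Step~3 is a correct clarification, not a deviation. The map $\sigma_v$ is only defined side-wise, and in general no single biholomorphism fixing $0$ straightens both arcs. So $\sigma_v^{-1}\circ A_v$ in the definition of $\Cau_v$ has to be read blockwise: the side-$j$ block of $\Cau_{\Gamma_{\theta_v,c_v}}$ is composed with $\sigma_{v,j}^{-1}\circ A_v$. This is what the paper's proof does tacitly when it flattens ``on each side''. With one common chart, the side-$2$ block would jump across $A_v^{-1}\sigma_{v,1}(\ell_2)$ rather than across $\gamma_{v,2}$, so the remainder could not be analytic, as you say.

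The step that genuinely fails is the identity you import in Step~2. It is Lemma~\ref{lem:flatten} verbatim, and that lemma counts the Jacobian $\sigma'$ twice. In the splitting $\sigma'(\tau)/(\sigma(\tau)-\sigma(w_0))=1/(\tau-w_0)+K_\sigma(\tau,w_0)$, the factor coming from $d\eta=\sigma'(\tau)\,d\tau$ is already inside the kernel. The correct identity is
\[
  \Cau_{\tilde\gamma}[g]\bigl(\sigma(w_0)\bigr)=\Cau_{\ell}[g\circ\sigma](w_0)
  +\frac{1}{2\pi i}\int_{\ell}(g\circ\sigma)(\tau)\,K_\sigma(\tau,w_0)\,d\tau ,
\]
with no $\sigma'$ in either term. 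For example, $\sigma(\tau)=2\tau$ gives $K_\sigma\equiv0$, and the lemma as printed is off by a factor $2$.

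The jump shows that this breaks the proposition itself. Across $\gamma_{v,j}$, $\Cau_\Gamma[\chi_v f]$ jumps by $\chi_v f$. The side-$j$ block of $\Cau_v[f]$ jumps by $\Psi_v(\chi_v f)\circ\sigma_{v,j}^{-1}\circ A_v=(\chi_v f)\,\bigl(\sigma_{v,j}'\circ\sigma_{v,j}^{-1}\circ A_v\bigr)$. A function analytic at $v$ has no jump, so with $\Psi_v$ as written the remainder is analytic only if $\sigma_{v,j}'\equiv1$, i.e.\ for straight sides.

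The repair is to transport by $(\chi_v f)\circ A_v^{-1}\circ\sigma_{v,j}$ on side $j$, dropping the factor $\sigma_{v,j}'$. Your Steps~1--4 then go through unchanged. The remainder becomes the sum over $j$ of $w_0\mapsto\frac{1}{2\pi i}\int_{\ell_j}(g_j\circ\sigma_{v,j})(\tau)\,K_{\sigma_{v,j}}(\tau,w_0)\,d\tau$, evaluated at $w_0=\sigma_{v,j}^{-1}(A_v z)$. Since $\sigma_{v,j}'(\tau)=1+O(\tau)$, the spurious factor only alters modes one order above the leading ones, which is why it does not show up in the leading corner asymptotics.

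A smaller bookkeeping point of the same kind: on a Jordan curve one incident arc is traversed toward $v$, while both model sides are oriented away from $0$. The transported density on that side therefore also needs a factor $-1$.
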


\begin{proof}
By Lemma~\ref{lem:affine}, $\Cau_\Gamma[\chi_v f]=\Cau_{A_v\Gamma}
[(\chi_v f)\circ A_v^{-1}]\circ A_v$, and $A_v\Gamma$ near $0$ is the union of the
analytic arcs $\tilde\gamma_{v,j}=\sigma_v(\text{model side }j)$. Applying
Lemma~\ref{lem:flatten} on each side to $g=(\chi_v f)\circ A_v^{-1}$ replaces
$\Cau_{A_v\Gamma}[g]$ by $\Cau_{\Gamma_{\theta_v,c_v}}[(g\circ\sigma_v)\sigma_v']
=\Cau_{\Gamma_{\theta_v,c_v}}[\Psi_v(\chi_v f)]$, evaluated at
$\sigma_v^{-1}(A_v\,\cdot\,)$, plus the analytic remainders $\mathcal K_{\sigma_v}$.
This is the assertion.
\end{proof}

\subsection{Smoothness of off-vertex interactions}
\begin{lemma}[Separated supports]\label{lem:separated}
For $v\neq w$ in $V(\Gamma)$, $\Cau_\Gamma[\chi_w f]$ is analytic in a
neighbourhood of $v$; likewise $\Cau_\Gamma[\chi_0 f]$ is analytic near every
vertex.
\end{lemma}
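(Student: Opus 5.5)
The plan is to show that the Cauchy kernel is holomorphic in $z$ near $v$ uniformly over the support of the density. Then differentiation under the integral, exactly as in Proposition~\ref{prop:interval-mapping}(1)--(2), gives analyticity.

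\emph{Step 1: geometry.} The set $\operatorname{supp}\chi_w\subset U_w$ is a compact subset of $\Gamma$. Since $U_w$ contains no other vertex, $v\notin\operatorname{supp}\chi_w$. (Shrinking the $U_w$ if necessary, the partition of unity \eqref{eq:pou} can be taken with closed supports, and the cutoffs can be chosen so that the supports stay away from the other vertices.) We therefore fix
\[
  \delta=\dist\bigl(v,\operatorname{supp}\chi_w\bigr)>0
\]
and take the disc $D=\{|z-v|<\delta/2\}$. For $z\in D$ and $\zeta\in\operatorname{supp}\chi_w$ we have $|\zeta-z|\ge\delta/2$. Hence the kernel $(\zeta-z)^{-1}$ is holomorphic in $z\in D$ and bounded by $2/\delta$, uniformly in $\zeta$.

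\emph{Step 2: holomorphy.} We write
\[
  \Cau_\Gamma[\chi_w f](z)=\frac{1}{2\pi i}\int_{\Gamma}\frac{\chi_w(\zeta)f(\zeta)}{\zeta-z}\,d\zeta .
\]
The density $\chi_w f$ is integrable on $\Gamma$: near $w$ this follows from Assumption~\ref{ass:integrable} on the transported side densities, and elsewhere $f$ is bounded. So the integrand is dominated by $(2/\delta)|\chi_w f|\in L^1(\Gamma,|d\zeta|)$. Morera's theorem, or differentiation under the integral sign, then shows that the integral defines a holomorphic function on all of $D$. This holds even on $D\cap\Gamma$, because the only points of $\Gamma$ where the kernel is singular lie in $\operatorname{supp}\chi_w$, which is disjoint from $D$.

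The key observation is that the transform of a compactly supported density is holomorphic off the support, not merely off $\Gamma$. Consequently the function is genuinely analytic at $v$ and across the arcs of $\Gamma$ through $v$. The expansion $(\zeta-z)^{-1}=\sum_n (z-v)^n(\zeta-v)^{-n-1}$ makes this explicit, with Taylor coefficients bounded by $\|\chi_w f\|_{L^1}(2/\delta)^{n+1}$.

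\emph{Step 3: the smooth part.} For $\chi_0$ the same argument applies verbatim. Since $\chi_0=1-\sum_v\chi_v$ and each $\chi_v\equiv1$ on a neighbourhood of $v$ (we impose this in the choice of cutoffs), $\operatorname{supp}\chi_0$ is a compact set at positive distance from every vertex. Taking $\delta_0=\min_v\dist(v,\operatorname{supp}\chi_0)$ gives analyticity on the disc of radius $\delta_0/2$ around each vertex.

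The only real obstacle is ensuring the strict separation $\dist(v,\operatorname{supp}\chi)>0$. The hypotheses as stated ("supported in $U_v$", "supported away from all vertices") must be read as closed supports, with $\chi_v\equiv1$ near $v$. If $\chi_v$ were allowed to vanish at $v$ or to have support accumulating at $v$, then $\chi_0$ could reach $v$ and the claim would fail. I would therefore first make this normalization of \eqref{eq:pou} explicit and then run Steps 1--3.
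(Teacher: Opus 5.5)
Your proposal is correct and follows the paper's own argument. Positive distance from $v$ to the support of the cutoff makes $(\zeta-z)^{-1}$ bounded and holomorphic in $z$ near $v$, and differentiation under the integral then gives analyticity. Your explicit normalization (closed supports, $\chi_v\equiv1$ near $v$) spells out what the paper builds into its description of the partition of unity \eqref{eq:pou} when it says $\chi_0$ is supported away from all vertices.
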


\begin{proof}
The kernel $\frac{1}{\zeta-z}$ with $\zeta\in\operatorname{supp}\chi_w$ (or
$\operatorname{supp}\chi_0$) and $z$ in a neighbourhood of $v$ disjoint from that
support is bounded with $|\zeta-z|\ge d>0$ and depends holomorphically on $z$;
differentiation under the integral gives holomorphy of $\Cau_\Gamma[\chi_w f]$
in $z$ near $v$.
\end{proof}

\subsection{The global decomposition and its asymptotic consequence}

\begin{theorem}[Local-to-global singular decomposition on rational polygons]
\label{thm:localization}
Let $\Gamma$ be a piecewise analytic Jordan curve with rational corner angles
$\theta_v=p_v\pi/q_v$, $v\in V(\Gamma)$, and cutoffs \eqref{eq:pou}. Then
\begin{equation}\label{eq:localization}
  \Cau_\Gamma
  =\sum_{v\in V(\Gamma)}\Cau_v
   +\Cau_\Gamma^{\mathrm{sm}}
   +\mathcal R_\Gamma,
\end{equation}
where each $\Cau_v$ is the transported finite-sheeted rational-wedge operator of
Proposition~\ref{prop:vertex-model} for the model angle $\theta_v$,
$\Cau_\Gamma^{\mathrm{sm}}=\Cau_\Gamma\chi_0$ is the smooth-arc Cauchy operator
(singular only along the diagonal away from the vertices), and the remainder
$\mathcal R_\Gamma$ has a kernel analytic in a neighbourhood of every vertex. In
particular, near each vertex $v$,
\[
  \Cau_\Gamma f-\Cau_v[f]\quad\text{is analytic},
\]
so the singular structure of $\Cau_\Gamma$ at $v$ is exactly that of the
finite-sheeted wedge model $\Cau_{\Gamma_{\theta_v,c_v}}$.
\end{theorem}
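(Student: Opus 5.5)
The plan is to split $f$ with the partition of unity \eqref{eq:pou}, so that by linearity $\Cau_\Gamma f=\sum_{v}\Cau_\Gamma[\chi_v f]+\Cau_\Gamma[\chi_0 f]$. We then define $\mathcal R_\Gamma:=\sum_v\bigl(\Cau_\Gamma\chi_v-\Cau_v\bigr)$ and set $\Cau_\Gamma^{\mathrm{sm}}=\Cau_\Gamma\chi_0$. With these choices \eqref{eq:localization} holds as an algebraic identity, and everything reduces to proving that $\mathcal R_\Gamma f$ is analytic near every vertex, with a kernel analytic there.

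Fix a vertex $v$ and work in a neighbourhood $W_v$ of $v$ small enough that it avoids $\operatorname{supp}\chi_w$ for every $w\neq v$, and avoids $\operatorname{supp}\chi_0$. On $W_v$ we decompose the remainder as
\[
\mathcal R_\Gamma f=\bigl(\Cau_\Gamma[\chi_v f]-\Cau_v[f]\bigr)+\sum_{w\neq v}\bigl(\Cau_\Gamma[\chi_w f]-\Cau_w[f]\bigr).
\]
The first bracket is analytic at $v$ by Proposition~\ref{prop:vertex-model}, since it is exactly the sum of the flattening remainders $\mathcal K_{\sigma_v}$ from Lemma~\ref{lem:flatten}. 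These carry explicit kernels analytic near $(0,0)$; pulled back by the affine map $A_v$ via Lemma~\ref{lem:affine}, they give a kernel analytic near $(v,v)$.

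For $w\neq v$, the term $\Cau_\Gamma[\chi_w f]$ is analytic near $v$ by Lemma~\ref{lem:separated}. We must also check that $\Cau_w[f]$ is analytic near $v$. By Lemma~\ref{lem:affine}, $\Cau_w[f]$ is a Cauchy integral over the model wedge of a density supported in $\sigma_w^{-1}A_w(\operatorname{supp}\chi_w)$, evaluated at $\sigma_w^{-1}A_w(z)$. That support is compact and separated from the evaluation point for $z$ near $v$, so the same differentiation-under-the-integral argument as in Lemma~\ref{lem:separated} applies. Here we need the mild technical point that $\sigma_w^{-1}\circ A_w$ is defined and holomorphic near $v$ (or, alternatively, that $\Cau_w$ is only used on $U_w$), so we restrict the model operators to their charts accordingly. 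Finally, $\Cau_\Gamma^{\mathrm{sm}}f=\Cau_\Gamma[\chi_0 f]$ is analytic near $v$ by Lemma~\ref{lem:separated}, so $\Cau_\Gamma f-\Cau_v[f]$ is a sum of functions analytic at $v$. This gives the last assertion.

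The main obstacle is making sense of $\Cau_w$ globally rather than near $w$, because the flattening charts $\sigma_w$ are only local. The first thing to try is to shrink the supports $U_w$ inside the domains of the charts and interpret $\Cau_w$ as acting on $U_w$, extended by the transported Cauchy integral, which is holomorphic off the compact support image. Kernel-level analyticity of $\mathcal R_\Gamma$ near each vertex then follows, because each piece is given by an explicit kernel, either $K_{\sigma_v}$ or a Cauchy kernel with separated variables, and each such kernel is jointly analytic there.
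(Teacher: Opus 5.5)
Your proposal is correct and follows the paper's proof. You split $f$ by the partition of unity, set $\mathcal R_\Gamma=\sum_v(\Cau_\Gamma\chi_v-\Cau_v)$, get analyticity near $v$ from the flattening remainders of Proposition~\ref{prop:vertex-model}, and get it near the other vertices from Lemma~\ref{lem:separated}. Your extra check that $\Cau_w[f]$ itself is analytic near $v\neq w$ (reading $\Cau_w$ on its chart, since $\sigma_w^{-1}\circ A_w$ is only local) fills a step the paper's proof passes over, because Lemma~\ref{lem:separated} only covers the $\Cau_\Gamma[\chi_w f]$ half of each remainder term.
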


\begin{proof}
Decompose $f=\chi_0 f+\sum_v\chi_v f$ by \eqref{eq:pou}, so
\[
  \Cau_\Gamma f
  =\Cau_\Gamma[\chi_0 f]+\sum_v\Cau_\Gamma[\chi_v f]
  =\Cau_\Gamma^{\mathrm{sm}}f+\sum_v\Cau_v[f]+\sum_v R_v[f],
\]
where $\Cau_\Gamma^{\mathrm{sm}}:=\Cau_\Gamma[\chi_0\,\cdot\,]$ and
\[
  R_v[f]:=\Cau_\Gamma[\chi_v f]-\Cau_v[f].
\]
Set $\mathcal R_\Gamma:=\sum_v R_v$; no term is counted twice. Each $R_v$ is
analytic near $v$ by Proposition~\ref{prop:vertex-model} (the flattening
remainder), and analytic near every other vertex $w\neq v$ by
Lemma~\ref{lem:separated} (separated supports, since $\operatorname{supp}\chi_v$
is bounded away from $w$). Hence $\mathcal R_\Gamma=\sum_v R_v$ is analytic in a
neighbourhood of every vertex. The smooth-arc term $\Cau_\Gamma^{\mathrm{sm}}$ is
likewise analytic near each vertex (its source $\chi_0 f$ is supported away from
the vertices, Lemma~\ref{lem:separated}), while retaining the ordinary Plemelj
singularity along the diagonal on the analytic part of $\Gamma$, where the
vertex models are not singular. Thus $\mathcal R_\Gamma$ (and
$\Cau_\Gamma^{\mathrm{sm}}$) is smooth precisely across the vertex-localized
singular supports, which is \eqref{eq:localization}.
\end{proof}

\begin{corollary}[Vertexwise polyhomogeneous expansion]\label{cor:vertexwise}
Suppose $f$ has classical conormal polyhomogeneous expansions at every rational
corner of $\Gamma$. Then $\Cau_\Gamma f$ has, at each vertex $v$, the sectorwise
expansion of Theorem~\ref{thm:propagation} for the model angle $\theta_v$,
computed from the transported density $\Psi_v(\chi_v f)$. No new singular
exponents are generated by the curvature of the sides or by inter-vertex
interaction: the analytic flattening $\sigma_v$ and the analytic remainder
$\mathcal R_\Gamma$ contribute only integer powers, which are absorbed into the
analytic family of the expansion.
\end{corollary}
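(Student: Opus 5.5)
The plan is to combine Theorem~\ref{thm:localization} with Theorem~\ref{thm:propagation} applied to the transported vertex density. Fix a vertex $v$. By Theorem~\ref{thm:localization}, $\Cau_\Gamma f=\Cau_v[f]+(\text{analytic at }v)$. The analytic part is a convergent power series in $z-v$, which becomes a power series in $w=\sigma_v^{-1}(A_v z)$ after composing with the biholomorphic chart. It therefore contributes only terms $w^\nu$ with $\nu\in\Z_{\ge0}$ and $h'=0$, so it falls into the family $B^\bullet_{\nu,h'}$. It remains to expand $\Cau_v[f]=\Cau_{\Gamma_{\theta_v,c_v}}[\Psi_v(\chi_v f)](w)$.

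First I will check that the transported density $\Psi_v(\chi_v f)$ has a classical conormal expansion on each model side, with the same exponent set up to integer shifts. On side $j$, write $F_j(r)=f\bigl(A_v^{-1}(\sigma_v(re^{i\theta_{s}}))\bigr)$. The arclength parameter $t$ of $\gamma_{v,j}$ satisfies $t=\lambda_v^{-1}|\sigma_v(re^{i\theta_s})|=\lambda_v^{-1}r\,u_j(r)$ with $u_j$ real-analytic and $u_j(0)=1$, since $\sigma_v'(0)=1$. Substituting into $t^{\alpha}(\log t)^h$ gives
$r^{\alpha}u_j^{\alpha}(\log r+\log u_j-\log\lambda_v)^h$, which re-expands into the modes $r^{\alpha+k}(\log r)^{h''}$ with $k\in\Z_{\ge0}$ and $h''\le h$. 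Multiplying by the analytic factor $\sigma_v'$ and by $\chi_v$, which equals $1$ near $v$, preserves this form. The top-log coefficient of the leading mode $r^{\alpha_{j,0}}$ is unchanged. Truncation remainders stay of the same order because $u_j$ is bounded away from $0$. Consequently $\Psi_v(\chi_v f)$ satisfies the hypothesis of Theorem~\ref{thm:propagation}, including Assumption~\ref{ass:integrable}, since the leading real parts are unchanged. Integer shifts $\alpha+k$ may create new resonances, but Theorem~\ref{thm:propagation} already accounts for these through $\delta_{j,\ell}$.

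Next I apply Theorem~\ref{thm:propagation} with angle $\theta_v$ to obtain the sectorwise expansion in $w$. To return to the variable $z$, I use $w=\lambda_v e^{-i\phi_v}(z-v)\,(1+O(z-v))$, an analytic change of variable. Then
\[
w^{\alpha}(\log w)^{h'}=\bigl(\lambda_v e^{-i\phi_v}(z-v)\bigr)^{\alpha}(1+O)^{\alpha}\bigl(\log(z-v)+\text{const}+O\bigr)^{h'}.
\]
With branches matched sector by sector, this produces only modes $(z-v)^{\alpha+k}(\log(z-v))^{h''}$ with $k\ge0$ and $h''\le h'$. The sectors of $\Gamma$ near $v$ correspond to $\Sigma_{\mathrm I},\Sigma_{\mathrm{II}}$ because the chart is conformal at $v$ and preserves the tangent directions up to the rotation $\phi_v$.

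The main obstacle is the claim that no new exponents arise. Each chart, $\sigma_v$, the arclength reparametrization, and $\mathcal R_\Gamma$, contributes analytic factors only, so it produces integer shifts $\alpha+k$ of existing exponents and integer powers. I will argue that these shifted exponents belong to the family $\{\alpha_{j,\ell}\}+\Z_{\ge0}$, which a classical polyhomogeneous expansion already allows, and that the integer powers are absorbed into $B^\bullet_{\nu,h'}$. I must also check that the asymptotic re-expansions are uniform within each closed subsector. This follows because $\arg w-\arg(z-v)-\text{const}$ tends to $0$ continuously as $z\to v$.
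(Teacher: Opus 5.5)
Your proposal is correct and follows essentially the same route as the paper: reduce to $\Cau_v[f]$ via Theorem~\ref{thm:localization}, observe that the analytic transport $\Psi_v$ and the chart $\sigma_v^{-1}\circ A_v$ preserve conormal polyhomogeneity, apply Theorem~\ref{thm:propagation} to $\Psi_v(\chi_v f)$, and absorb the analytic remainder into the integer-power family. You are more careful than the paper's proof about the subleading shifts $\alpha+k$ coming from the $O(r)$ corrections, and about re-expanding from the chart variable back to $z-v$; one small correction is that the leading coefficient is rescaled by a nonzero constant such as $\lambda_v^{-\alpha}$, not literally unchanged.
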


\begin{proof}
By Theorem~\ref{thm:localization}, near $v$ the singular part of $\Cau_\Gamma f$
equals that of $\Cau_v[f]$, a transported wedge model. The transport maps
$\Psi_v$ and $\sigma_v^{-1}\!\circ A_v$ are analytic with nonvanishing
derivative at $0$, so they preserve conormal polyhomogeneity: a mode
$r^{\alpha}(\log r)^{h}$ has $(\sigma_v(r))^{\alpha}=\sigma_v'(0)^{\alpha}
r^{\alpha}(1+O(r))$ and $\log\sigma_v(r)=\log r+O(1)$, leaving the exponents
$\alpha$ and the top log power unchanged while shifting only the coefficients.
Theorem~\ref{thm:propagation} then gives the sectorwise expansion of
$\Cau_v[f]$; the remainder $\mathcal R_\Gamma$ is analytic at $v$ and adds only
to the integer-power analytic family.
\end{proof}

\begin{remark}
Theorem~\ref{thm:localization} promotes the wedge calculus to a rational-polygon
calculus: the singular part of the Cauchy operator on $\Gamma$ is a finite sum
of finite-sheeted wedge models, one per corner, the corners not interacting at
the level of singular structure. The same localization applies verbatim to the
logarithmic and Helmholtz layer operators of \S\S\ref{sec:log}--\ref{sec:helmholtz}
through Lemmas~\ref{lem:affine}--\ref{lem:separated}, since those operators were
already reduced there to the wedge Cauchy and logarithmic transforms. We do not
develop a global Fredholm theory, which would require mapping properties of
$\mathcal R_\Gamma$ and of the corner blocks on a global space; the present
statement is at the level of kernel regularity and conormal asymptotics,
matching the caution of Theorem~\ref{thm:helmholtz}.
\end{remark}

\section{Outlook: irrational angles}
\label{sec:outlook}

The construction is genuinely arithmetic in the angle. The factorization
\eqref{eq:factor-thm} and the propagation Theorem~\ref{thm:propagation} use that
$\theta=p\pi/q$ with $q$ finite, so that $w=\zeta^{q}$ is a finite covering and
the angular monodromy of $z^{\alpha}$ closes after $q$ sheets; the polygon
localization of \S\ref{sec:localization} then assembles the finite-sheeted
models across all corners. For irrational $\theta/\pi$ no finite covering
linearizes the corner: the natural object is an infinite-sheeted covering, or a
limit of rational models $p_n/q_n\to\theta/\pi$ with $q_n\to\infty$. The conormal
mapping bound of Theorem~\ref{thm:Lq-bounded} has operator norm $q_n\to\infty$
and the H\"older exponent $\beta/q_n\to0$, so the rational models do not pass
uniformly to the limit; an irrational-angle theory would require a genuinely
different, non-finite-sheeted mechanism. We therefore state the present results
only for rational angles, where the finite-sheeted models are exact, and leave
the irrational case open.

\section*{Author’s Important Statements}
All authors are satisfied with this preprint. This research is the joint work with Jiguang Yu from Boston University, USA and Ye Liang from the University of Iowa, USA. Some of the work comes from Ye Liang's undergraduate research at University College London, UK.)



\begin{thebibliography}{56}
\ifx \bisbn   \undefined \def \bisbn  #1{ISBN #1}\fi
\ifx \binits  \undefined \def \binits#1{#1}\fi
\ifx \bauthor  \undefined \def \bauthor#1{#1}\fi
\ifx \batitle  \undefined \def \batitle#1{#1}\fi
\ifx \bjtitle  \undefined \def \bjtitle#1{#1}\fi
\ifx \bvolume  \undefined \def \bvolume#1{\textbf{#1}}\fi
\ifx \byear  \undefined \def \byear#1{#1}\fi
\ifx \bissue  \undefined \def \bissue#1{#1}\fi
\ifx \bfpage  \undefined \def \bfpage#1{#1}\fi
\ifx \blpage  \undefined \def \blpage #1{#1}\fi
\ifx \burl  \undefined \def \burl#1{\textsf{#1}}\fi
\ifx \doiurl  \undefined \def \doiurl#1{\url{https://doi.org/#1}}\fi
\ifx \betal  \undefined \def \betal{\textit{et al.}}\fi
\ifx \binstitute  \undefined \def \binstitute#1{#1}\fi
\ifx \binstitutionaled  \undefined \def \binstitutionaled#1{#1}\fi
\ifx \bctitle  \undefined \def \bctitle#1{#1}\fi
\ifx \beditor  \undefined \def \beditor#1{#1}\fi
\ifx \bpublisher  \undefined \def \bpublisher#1{#1}\fi
\ifx \bbtitle  \undefined \def \bbtitle#1{#1}\fi
\ifx \bedition  \undefined \def \bedition#1{#1}\fi
\ifx \bseriesno  \undefined \def \bseriesno#1{#1}\fi
\ifx \blocation  \undefined \def \blocation#1{#1}\fi
\ifx \bsertitle  \undefined \def \bsertitle#1{#1}\fi
\ifx \bsnm \undefined \def \bsnm#1{#1}\fi
\ifx \bsuffix \undefined \def \bsuffix#1{#1}\fi
\ifx \bparticle \undefined \def \bparticle#1{#1}\fi
\ifx \barticle \undefined \def \barticle#1{#1}\fi
\bibcommenthead
\ifx \bconfdate \undefined \def \bconfdate #1{#1}\fi
\ifx \botherref \undefined \def \botherref #1{#1}\fi
\ifx \url \undefined \def \url#1{\textsf{#1}}\fi
\ifx \bchapter \undefined \def \bchapter#1{#1}\fi
\ifx \bbook \undefined \def \bbook#1{#1}\fi
\ifx \bcomment \undefined \def \bcomment#1{#1}\fi
\ifx \oauthor \undefined \def \oauthor#1{#1}\fi
\ifx \citeauthoryear \undefined \def \citeauthoryear#1{#1}\fi
\ifx \endbibitem  \undefined \def \endbibitem {}\fi
\ifx \bconflocation  \undefined \def \bconflocation#1{#1}\fi
\ifx \arxivurl  \undefined \def \arxivurl#1{\textsf{#1}}\fi
\csname PreBibitemsHook\endcsname

\bibitem[\protect\citeauthoryear{Kerzman and Stein}{1978}]{kerzman1978cauchy}
\begin{barticle}
\bauthor{\bsnm{Kerzman}, \binits{N.}},
\bauthor{\bsnm{Stein}, \binits{E.M.}}:
\batitle{The {C}auchy kernel, the {S}zeg{\"o} kernel, and the {R}iemann mapping function}.
\bjtitle{Mathematische Annalen}
\bvolume{236}(\bissue{1}),
\bfpage{85}--\blpage{93}
(\byear{1978})
\end{barticle}
\endbibitem

\bibitem[\protect\citeauthoryear{Bolt and Raich}{2015}]{bolt2015kerzman}
\begin{barticle}
\bauthor{\bsnm{Bolt}, \binits{M.}},
\bauthor{\bsnm{Raich}, \binits{A.}}:
\batitle{The {K}erzman--{S}tein operator for piecewise continuously differentiable regions}.
\bjtitle{Complex Variables and Elliptic Equations}
\bvolume{60}(\bissue{4}),
\bfpage{478}--\blpage{492}
(\byear{2015})
\end{barticle}
\endbibitem

\bibitem[\protect\citeauthoryear{King}{2009}]{king2009hilbert}
\begin{bbook}
\bauthor{\bsnm{King}, \binits{F.W.}}:
\bbtitle{Hilbert {T}ransforms: {V}olume 2}
vol. \bseriesno{2}.
\bpublisher{Cambridge University Press},
\blocation{Cambridge}
(\byear{2009})
\end{bbook}
\endbibitem

\bibitem[\protect\citeauthoryear{Olver}{2011}]{olver2011computing}
\begin{barticle}
\bauthor{\bsnm{Olver}, \binits{S.}}:
\batitle{Computing the {H}ilbert transform and its inverse}.
\bjtitle{Mathematics of computation}
\bvolume{80}(\bissue{275}),
\bfpage{1745}--\blpage{1767}
(\byear{2011})
\end{barticle}
\endbibitem

\bibitem[\protect\citeauthoryear{Reyes and Blaya}{2003}]{reyes2003one}
\begin{barticle}
\bauthor{\bsnm{Reyes}, \binits{J.B.}},
\bauthor{\bsnm{Blaya}, \binits{R.A.}}:
\batitle{One-dimensional singular integral equations}.
\bjtitle{Complex Variables Theory and Application}
\bvolume{48}(\bissue{6}),
\bfpage{483}--\blpage{493}
(\byear{2003})
\end{barticle}
\endbibitem

\bibitem[\protect\citeauthoryear{Muskhelishvili and Radok}{2008}]{muskhelishvili2008singular}
\begin{bbook}
\bauthor{\bsnm{Muskhelishvili}, \binits{N.I.}},
\bauthor{\bsnm{Radok}, \binits{J.R.M.}}:
\bbtitle{Singular Integral Equations: Boundary Problems of Function Theory and Their Application to Mathematical Physics}.
\bpublisher{Courier Corporation},
\blocation{Mineola, NY}
(\byear{2008})
\end{bbook}
\endbibitem

\bibitem[\protect\citeauthoryear{Gakhov}{2014}]{gakhov2014boundary}
\begin{bbook}
\bauthor{\bsnm{Gakhov}, \binits{F.D.}}:
\bbtitle{Boundary Value Problems}.
\bpublisher{Elsevier},
\blocation{Amsterdam}
(\byear{2014})
\end{bbook}
\endbibitem

\bibitem[\protect\citeauthoryear{Wang and Yu}{2025}]{wang2025analysis}
\begin{barticle}
\bauthor{\bsnm{Wang}, \binits{L.S.}},
\bauthor{\bsnm{Yu}, \binits{J.}}:
\batitle{Analysis framework for stochastic predator--prey model with demographic noise}.
\bjtitle{Results in Applied Mathematics}
\bvolume{27},
\bfpage{100621}
(\byear{2025})
\end{barticle}
\endbibitem

\bibitem[\protect\citeauthoryear{Wang et~al.}{2025}]{wang2025analysis1}
\begin{barticle}
\bauthor{\bsnm{Wang}, \binits{L.S.}},
\bauthor{\bsnm{Yu}, \binits{J.}},
\bauthor{\bsnm{Li}, \binits{S.}},
\bauthor{\bsnm{Liu}, \binits{Z.}}:
\batitle{Analysis and mean-field limit of a hybrid pde-abm modeling angiogenesis-regulated resistance evolution}.
\bjtitle{Mathematics}
\bvolume{13}(\bissue{17}),
\bfpage{2898}
(\byear{2025})
\end{barticle}
\endbibitem

\bibitem[\protect\citeauthoryear{Pritsker}{2008}]{pritsker2008find}
\begin{barticle}
\bauthor{\bsnm{Pritsker}, \binits{I.}}:
\batitle{How to find a measure from its potential}.
\bjtitle{Computational Methods and Function Theory}
\bvolume{8}(\bissue{2}),
\bfpage{597}--\blpage{614}
(\byear{2008})
\end{barticle}
\endbibitem

\bibitem[\protect\citeauthoryear{Abreu~Blaya and Bory~Reyes}{2012}]{abreu2012plemelj}
\begin{barticle}
\bauthor{\bsnm{Abreu~Blaya}, \binits{R.}},
\bauthor{\bsnm{Bory~Reyes}, \binits{J.}}:
\batitle{The {P}lemelj--{P}rivalov theorem in variable exponent {C}lifford analysis}.
\bjtitle{Georgian Math. J}
\bvolume{19}(\bissue{3}),
\bfpage{401}--\blpage{415}
(\byear{2012})
\end{barticle}
\endbibitem

\bibitem[\protect\citeauthoryear{Yu et~al.}{2026}]{yu2026from}
\begin{barticle}
\bauthor{\bsnm{Yu}, \binits{J.}},
\bauthor{\bsnm{Wang}, \binits{L.S.}},
\bauthor{\bsnm{Ban}, \binits{S.}},
\bauthor{\bsnm{Liang}, \binits{Y.}}:
\batitle{From microscopic damage to macroscopic games: a dimensionality reduction of stem cell homeostasis}.
\bjtitle{Transport Phenomena}
\bvolume{1}(\bissue{2}),
\bfpage{20260037}
(\byear{2026})
\end{barticle}
\endbibitem

\bibitem[\protect\citeauthoryear{Abreu-Blaya et~al.}{2007}]{abreu2007notion}
\begin{barticle}
\bauthor{\bsnm{Abreu-Blaya}, \binits{R.}},
\bauthor{\bsnm{Bory-Reyes}, \binits{J.}},
\bauthor{\bsnm{Shapiro}, \binits{M.}}:
\batitle{On the notion of the {B}ochner--{M}artinelli integral for domains with rectifiable boundary}.
\bjtitle{Complex analysis and Operator theory}
\bvolume{1}(\bissue{2}),
\bfpage{143}--\blpage{168}
(\byear{2007})
\end{barticle}
\endbibitem

\bibitem[\protect\citeauthoryear{Liu et~al.}{2025}]{liu2025bidirectional}
\begin{barticle}
\bauthor{\bsnm{Liu}, \binits{Z.}},
\bauthor{\bsnm{Wang}, \binits{L.S.}},
\bauthor{\bsnm{Yu}, \binits{J.}},
\bauthor{\bsnm{Zhang}, \binits{J.}},
\bauthor{\bsnm{Martel}, \binits{E.}},
\bauthor{\bsnm{Li}, \binits{S.}}:
\batitle{Bidirectional endothelial feedback drives turing-vascular patterning and drug-resistance niches: a hybrid pde-agent-based study}.
\bjtitle{Bioengineering}
\bvolume{12}(\bissue{10}),
\bfpage{1097}
(\byear{2025})
\end{barticle}
\endbibitem

\bibitem[\protect\citeauthoryear{Schrohe}{2023}]{schrohe2023introduction}
\begin{bchapter}
\bauthor{\bsnm{Schrohe}, \binits{E.}}:
\bctitle{Introduction to the {A}nalysis on {M}anifolds with {C}onical {S}ingularities}.
In: \bbtitle{Methusalem Workshop on Classical Analysis and PDEs},
pp. \bfpage{105}--\blpage{118}
(\byear{2023}).
\bcomment{Springer}
\end{bchapter}
\endbibitem

\bibitem[\protect\citeauthoryear{Liang et~al.}{2025}]{liang2025global}
\begin{barticle}
\bauthor{\bsnm{Liang}, \binits{Y.}},
\bauthor{\bsnm{Wang}, \binits{L.S.}},
\bauthor{\bsnm{Yu}, \binits{J.}},
\bauthor{\bsnm{Liu}, \binits{Z.}}:
\batitle{Global well-posedness and stability of nonlocal damage-structured lineage model with feedback and dedifferentiation}.
\bjtitle{Mathematics}
\bvolume{13}(\bissue{22}),
\bfpage{3583}
(\byear{2025})
\end{barticle}
\endbibitem

\bibitem[\protect\citeauthoryear{Schulze}{2022}]{schulze2022mellin}
\begin{bchapter}
\bauthor{\bsnm{Schulze}, \binits{B.-W.}}:
\bctitle{Mellin operators in weighted corner spaces}.
In: \bbtitle{Differential Equations on Manifolds and Mathematical Physics: Dedicated to the Memory of Boris Sternin},
pp. \bfpage{287}--\blpage{313}.
\bpublisher{Springer},
\blocation{Cham}
(\byear{2022})
\end{bchapter}
\endbibitem

\bibitem[\protect\citeauthoryear{Anjam}{2020}]{anjam2020geometric}
\begin{barticle}
\bauthor{\bsnm{Anjam}, \binits{Y.N.}}:
\batitle{Geometric {S}ingularities of the {P}oisson's {E}quation in a {N}on-{S}mooth {D}omain with {A}pplications of {W}eighted {S}obolev {S}paces}.
\bjtitle{International Journal of Analysis and Applications}
\bvolume{18}(\bissue{4}),
\bfpage{672}--\blpage{688}
(\byear{2020})
\end{barticle}
\endbibitem

\bibitem[\protect\citeauthoryear{Maday and Marcati}{2019}]{maday2019regularity}
\begin{barticle}
\bauthor{\bsnm{Maday}, \binits{Y.}},
\bauthor{\bsnm{Marcati}, \binits{C.}}:
\batitle{Regularity and hp discontinuous {G}alerkin finite element approximation of linear elliptic eigenvalue problems with singular potentials}.
\bjtitle{Mathematical Models and Methods in Applied Sciences}
\bvolume{29}(\bissue{08}),
\bfpage{1585}--\blpage{1617}
(\byear{2019})
\end{barticle}
\endbibitem

\bibitem[\protect\citeauthoryear{Wang and Yu}{2026}]{wang2026algebraic}
\begin{barticle}
\bauthor{\bsnm{Wang}, \binits{L.S.}},
\bauthor{\bsnm{Yu}, \binits{J.}}:
\batitle{Algebraic--spectral thresholds and discrete--continuous stability transfer in leslie--gower systems}.
\bjtitle{Electronic Research Archive}
\bvolume{34}(\bissue{1}),
\bfpage{251}--\blpage{290}
(\byear{2026})
\end{barticle}
\endbibitem

\bibitem[\protect\citeauthoryear{David and Journ{\'e}}{1984}]{david1984boundedness}
\begin{botherref}
\oauthor{\bsnm{David}, \binits{G.}},
\oauthor{\bsnm{Journ{\'e}}, \binits{J.-L.}}:
A boundedness criterion for generalized {C}alder{\'o}n-{Z}ygmund operators.
Annals of Mathematics,
371--397
(1984)
\end{botherref}
\endbibitem

\bibitem[\protect\citeauthoryear{Gao et~al.}{2022}]{gao2022rolling}
\begin{bchapter}
\bauthor{\bsnm{Gao}, \binits{Y.}},
\bauthor{\bsnm{Li}, \binits{L.}},
\bauthor{\bsnm{Yu}, \binits{J.}}:
\bctitle{Rolling prediction model of closing price based on eemd data noise reduction and hgs-delm}.
In: \bbtitle{2022 International Conference on Data Analytics, Computing and Artificial Intelligence (ICDACAI)},
pp. \bfpage{255}--\blpage{260}
(\byear{2022}).
\bcomment{IEEE}
\end{bchapter}
\endbibitem

\bibitem[\protect\citeauthoryear{Rochberg}{1984}]{rochberg1984calderon}
\begin{bchapter}
\bauthor{\bsnm{Rochberg}, \binits{R.}}:
\bctitle{Calder{\'o}n--{Z}ygmund operators, pseudo-differential operators and the {C}auchy integral of {C}alder{\'o}n}.
In: \bbtitle{Harmonic Analysis in Euclidean Spaces (Proceedings of Symposia in Pure Mathematics)}.
\bpublisher{American Mathematical Society},
\blocation{Providence, RI}
(\byear{1984})
\end{bchapter}
\endbibitem

\bibitem[\protect\citeauthoryear{Calder{\'o}n}{1977}]{calderon1977cauchy}
\begin{barticle}
\bauthor{\bsnm{Calder{\'o}n}, \binits{A.-P.}}:
\batitle{Cauchy integrals on {L}ipschitz curves and related operators}.
\bjtitle{Proceedings of the National Academy of Sciences}
\bvolume{74}(\bissue{4}),
\bfpage{1324}--\blpage{1327}
(\byear{1977})
\end{barticle}
\endbibitem

\bibitem[\protect\citeauthoryear{Wang et~al.}{2026}]{wang2026damage}
\begin{barticle}
\bauthor{\bsnm{Wang}, \binits{L.S.}},
\bauthor{\bsnm{Yu}, \binits{J.}},
\bauthor{\bsnm{Liu}, \binits{Z.}}:
\batitle{A damage-structured pde model of stem cell hierarchies: The dual role of dedifferentiation in tissue homeostasis and aging}.
\bjtitle{Plos one}
\bvolume{21}(\bissue{2}),
\bfpage{0335163}
(\byear{2026})
\end{barticle}
\endbibitem

\bibitem[\protect\citeauthoryear{McIntosh and Tao}{2006}]{mcintosh2006convolution}
\begin{bchapter}
\bauthor{\bsnm{McIntosh}, \binits{A.}},
\bauthor{\bsnm{Tao}, \binits{Q.}}:
\bctitle{Convolution singular integral operators on {L}ipschitz curves}.
In: \bbtitle{Harmonic Analysis: Proceedings of the Special Program at the Nankai Institute of Mathematics Tianjin, PR China, March--July, 1988},
pp. \bfpage{142}--\blpage{162}.
\bpublisher{Springer},
\blocation{Berlin Heidelberg}
(\byear{2006})
\end{bchapter}
\endbibitem

\bibitem[\protect\citeauthoryear{Wang et~al.}{2025}]{wang2025multi}
\begin{barticle}
\bauthor{\bsnm{Wang}, \binits{Z.}},
\bauthor{\bsnm{Wang}, \binits{D.}},
\bauthor{\bsnm{Yu}, \binits{J.}}:
\batitle{Multi-strategy hybrid improved intelligent algorithm for solving uav-mtsp}.
\bjtitle{Information Technology and Control}
\bvolume{54}(\bissue{2}),
\bfpage{413}--\blpage{438}
(\byear{2025})
\end{barticle}
\endbibitem

\bibitem[\protect\citeauthoryear{Coifman et~al.}{1982}]{coifman1982integrale}
\begin{barticle}
\bauthor{\bsnm{Coifman}, \binits{R.R.}},
\bauthor{\bsnm{McIntosh}, \binits{A.}},
\bauthor{\bsnm{Meyer}, \binits{Y.}}:
\batitle{L'int{\'e}grale de {C}auchy d{\'e}finit un op{\'e}rateur born{\'e} sur $l^2$ pour les courbes lipschitziennes}.
\bjtitle{Annals of Mathematics}
\bvolume{116}(\bissue{2}),
\bfpage{361}--\blpage{387}
(\byear{1982})
\end{barticle}
\endbibitem

\bibitem[\protect\citeauthoryear{Coifman et~al.}{1989}]{coifman1989two}
\begin{barticle}
\bauthor{\bsnm{Coifman}, \binits{R.R.}},
\bauthor{\bsnm{Jones}, \binits{P.W.}},
\bauthor{\bsnm{Semmes}, \binits{S.}}:
\batitle{Two elementary proofs of the $l^2$ boundedness of {C}auchy integrals on {L}ipschitz curves}.
\bjtitle{Journal of the American Mathematical Society}
\bvolume{2}(\bissue{3}),
\bfpage{553}--\blpage{564}
(\byear{1989})
\end{barticle}
\endbibitem

\bibitem[\protect\citeauthoryear{Yu et~al.}{2026}]{yu2026pattern}
\begin{botherref}
\oauthor{\bsnm{Yu}, \binits{J.}},
\oauthor{\bsnm{Wang}, \binits{L.S.}},
\oauthor{\bsnm{Liu}, \binits{Z.}},
\oauthor{\bsnm{Liu}, \binits{J.}}:
Pattern suppression and recovery under one-way versus two-way chemotactic coupling in hybrid partial differential equation--ordinary differential equation models.
Transport Phenomena
(0)
(2026)
\end{botherref}
\endbibitem

\bibitem[\protect\citeauthoryear{Qian et~al.}{2019}]{qian2019singular}
\begin{bbook}
\bauthor{\bsnm{Qian}, \binits{T.}},
\bauthor{\bsnm{Li}, \binits{P.}}, \betal:
\bbtitle{Singular Integrals and {F}ourier Theory on {L}ipschitz Boundaries}.
\bpublisher{Springer},
\blocation{Singapore}
(\byear{2019})
\end{bbook}
\endbibitem

\bibitem[\protect\citeauthoryear{Jones}{2006}]{jones2006square}
\begin{bchapter}
\bauthor{\bsnm{Jones}, \binits{P.W.}}:
\bctitle{Square functions, {C}auchy integrals, analytic capacity, and harmonic measure}.
In: \bbtitle{Harmonic Analysis and Partial Differential Equations: Proceedings of the International Conference Held in El Escorial, Spain, June 9--13, 1987},
pp. \bfpage{24}--\blpage{68}
(\byear{2006}).
\bcomment{Springer}
\end{bchapter}
\endbibitem

\bibitem[\protect\citeauthoryear{Cai et~al.}{2026}]{cai2026optimal}
\begin{botherref}
\oauthor{\bsnm{Cai}, \binits{J.}},
\oauthor{\bsnm{Chen}, \binits{X.}},
\oauthor{\bsnm{Gu}, \binits{L.}},
\oauthor{\bsnm{Chen}, \binits{J.}},
\oauthor{\bsnm{Chu}, \binits{N.}},
\oauthor{\bsnm{Wang}, \binits{L.S.}},
\oauthor{\bsnm{Liang}, \binits{Y.}},
\oauthor{\bsnm{Yu}, \binits{J.}}:
Optimal harvesting for nonlinear size-structured populations with nonlocal environmental feedback.
Mathematics
(2026)
\end{botherref}
\endbibitem

\bibitem[\protect\citeauthoryear{Verchota}{1984}]{verchota1984layer}
\begin{barticle}
\bauthor{\bsnm{Verchota}, \binits{G.}}:
\batitle{Layer potentials and regularity for the {D}irichlet problem for {L}aplace's equation in {L}ipschitz domains}.
\bjtitle{Journal of functional analysis}
\bvolume{59}(\bissue{3}),
\bfpage{572}--\blpage{611}
(\byear{1984})
\end{barticle}
\endbibitem

\bibitem[\protect\citeauthoryear{Costabel}{1988}]{costabel1988boundary}
\begin{barticle}
\bauthor{\bsnm{Costabel}, \binits{M.}}:
\batitle{Boundary integral operators on {L}ipschitz domains: elementary results}.
\bjtitle{SIAM journal on Mathematical Analysis}
\bvolume{19}(\bissue{3}),
\bfpage{613}--\blpage{626}
(\byear{1988})
\end{barticle}
\endbibitem

\bibitem[\protect\citeauthoryear{Mitrea}{1997}]{mitrea1997method}
\begin{barticle}
\bauthor{\bsnm{Mitrea}, \binits{D.}}:
\batitle{The method of layer potentials for non-smooth domains with arbitrary topology}.
\bjtitle{Integral Equations and Operator Theory}
\bvolume{29}(\bissue{3}),
\bfpage{320}--\blpage{338}
(\byear{1997})
\end{barticle}
\endbibitem

\bibitem[\protect\citeauthoryear{Wang et~al.}{2026}]{wang2026breakdown}
\begin{barticle}
\bauthor{\bsnm{Wang}, \binits{L.S.}},
\bauthor{\bsnm{Yu}, \binits{J.}},
\bauthor{\bsnm{Liang}, \binits{Y.}},
\bauthor{\bsnm{Zhang}, \binits{J.}}:
\batitle{The breakdown of linear quasi-cycles: Demographic noise and absorbing boundaries in finite predator--prey systems}.
\bjtitle{Electronic Research Archive}
\bvolume{34}(\bissue{6}),
\bfpage{4248}--\blpage{4289}
(\byear{2026})
\end{barticle}
\endbibitem

\bibitem[\protect\citeauthoryear{Mitrea and Mitrea}{2013}]{mitrea2013multi}
\begin{bbook}
\bauthor{\bsnm{Mitrea}, \binits{I.}},
\bauthor{\bsnm{Mitrea}, \binits{M.}}:
\bbtitle{Multi-layer Potentials and Boundary Problems: for Higher-order Elliptic Systems in {L}ipschitz Domains}.
\bpublisher{Springer},
\blocation{Berlin Heidelberg}
(\byear{2013})
\end{bbook}
\endbibitem

\bibitem[\protect\citeauthoryear{Fabes et~al.}{1977}]{fabes1977double}
\begin{barticle}
\bauthor{\bsnm{Fabes}, \binits{E.}},
\bauthor{\bsnm{JODEIT}, \binits{M.}},
\bauthor{\bsnm{Lewis}, \binits{J.E.}}:
\batitle{Double layer potentials for domains with corners and edges}.
\bjtitle{Indiana university mathematics journal}
\bvolume{26}(\bissue{1}),
\bfpage{95}--\blpage{114}
(\byear{1977})
\end{barticle}
\endbibitem

\bibitem[\protect\citeauthoryear{Costabel and Stephan}{1983}]{costabel1983normal}
\begin{barticle}
\bauthor{\bsnm{Costabel}, \binits{M.}},
\bauthor{\bsnm{Stephan}, \binits{M.}}:
\batitle{The normal dervative of the double layer potential on polygons and {G}alerkin approximation}.
\bjtitle{Applicable Analysis}
\bvolume{16}(\bissue{3}),
\bfpage{205}--\blpage{228}
(\byear{1983})
\end{barticle}
\endbibitem

\bibitem[\protect\citeauthoryear{Yu and Wang}{2026}]{yu2026beyond}
\begin{barticle}
\bauthor{\bsnm{Yu}, \binits{J.}},
\bauthor{\bsnm{Wang}, \binits{L.S.}}:
\batitle{Beyond diagonal noise: A better predator-prey modeling framework with cross-covariance}.
\bjtitle{PLoS One}
\bvolume{21}(\bissue{5}),
\bfpage{0350127}
(\byear{2026})
\end{barticle}
\endbibitem

\bibitem[\protect\citeauthoryear{Maz’ya}{1991}]{maz1991boundary}
\begin{bchapter}
\bauthor{\bsnm{Maz’ya}, \binits{V.G.}}:
\bctitle{Boundary integral equations}.
In: \bbtitle{Analysis IV: Linear and Boundary Integral Equations},
pp. \bfpage{127}--\blpage{222}.
\bpublisher{Springer},
\blocation{Berlin Heidelberg}
(\byear{1991})
\end{bchapter}
\endbibitem

\bibitem[\protect\citeauthoryear{Qiao and Nistor}{2012}]{qiao2012single}
\begin{barticle}
\bauthor{\bsnm{Qiao}, \binits{Y.}},
\bauthor{\bsnm{Nistor}, \binits{V.}}:
\batitle{Single and double layer potentials on domains with conical points {I}: {S}traight cones}.
\bjtitle{Integral Equations and Operator Theory}
\bvolume{72}(\bissue{3}),
\bfpage{419}--\blpage{448}
(\byear{2012})
\end{barticle}
\endbibitem

\bibitem[\protect\citeauthoryear{Yu et~al.}{2026}]{yu2026rigorous}
\begin{botherref}
\oauthor{\bsnm{Yu}, \binits{J.}},
\oauthor{\bsnm{Wang}, \binits{L.S.}},
\oauthor{\bsnm{Liang}, \binits{Y.}}:
Rigorous analysis of a nonlocal transport--renewal system for physiologically structured populations.
Mathematical Methods in the Applied Sciences
(2026)
\end{botherref}
\endbibitem

\bibitem[\protect\citeauthoryear{Qiao and Li}{2018}]{qiao2018double}
\begin{barticle}
\bauthor{\bsnm{Qiao}, \binits{Y.}},
\bauthor{\bsnm{Li}, \binits{H.}}:
\batitle{Double layer potentials on polygons and pseudodifferential operators on {L}ie groupoids}.
\bjtitle{Integral Equations and Operator Theory}
\bvolume{90}(\bissue{2}),
\bfpage{14}
(\byear{2018})
\end{barticle}
\endbibitem

\bibitem[\protect\citeauthoryear{Kress et~al.}{1989}]{kress1989linear}
\begin{bbook}
\bauthor{\bsnm{Kress}, \binits{R.}},
\bauthor{\bsnm{Maz'ya}, \binits{V.}},
\bauthor{\bsnm{Kozlov}, \binits{V.}}:
\bbtitle{Linear Integral Equations}
vol. \bseriesno{82}.
\bpublisher{Springer},
\blocation{Berlin Heidelberg}
(\byear{1989})
\end{bbook}
\endbibitem

\bibitem[\protect\citeauthoryear{Kress}{1991}]{kress1991boundary}
\begin{barticle}
\bauthor{\bsnm{Kress}, \binits{R.}}:
\batitle{Boundary integral equations in time-harmonic acoustic scattering}.
\bjtitle{Mathematical and Computer Modelling}
\bvolume{15}(\bissue{3-5}),
\bfpage{229}--\blpage{243}
(\byear{1991})
\end{barticle}
\endbibitem

\bibitem[\protect\citeauthoryear{Grisvard}{2011}]{grisvard2011elliptic}
\begin{bbook}
\bauthor{\bsnm{Grisvard}, \binits{P.}}:
\bbtitle{Elliptic Problems in Nonsmooth Domains}.
\bpublisher{SIAM},
\blocation{Philadelphia, PA}
(\byear{2011})
\end{bbook}
\endbibitem

\bibitem[\protect\citeauthoryear{Kozlov et~al.}{1997}]{kozlov1997elliptic}
\begin{bbook}
\bauthor{\bsnm{Kozlov}, \binits{V.}},
\bauthor{\bsnm{Kozlov}, \binits{V.}},
\bauthor{\bsnm{Maz{\'i}a}, \binits{V.G.}},
\bauthor{\bsnm{Rossmann}, \binits{J.}}:
\bbtitle{Elliptic Boundary Value Problems in Domains with Point Singularities}
vol. \bseriesno{52}.
\bpublisher{American Mathematical Soc.},
\blocation{Providence, RI}
(\byear{1997})
\end{bbook}
\endbibitem

\bibitem[\protect\citeauthoryear{Dauge}{2006}]{dauge2006elliptic}
\begin{bbook}
\bauthor{\bsnm{Dauge}, \binits{M.}}:
\bbtitle{Elliptic Boundary Value Problems on Corner Domains: Smoothness and Asymptotics of Solutions}.
\bpublisher{Springer},
\blocation{Berlin Heidelberg}
(\byear{2006})
\end{bbook}
\endbibitem

\bibitem[\protect\citeauthoryear{Costabel et~al.}{2003}]{costabel2003asymptotics}
\begin{barticle}
\bauthor{\bsnm{Costabel}, \binits{M.}},
\bauthor{\bsnm{Dauge}, \binits{M.}},
\bauthor{\bsnm{Duduchava}, \binits{R.}}:
\batitle{Asymptotics {W}ithout {L}ogarithmic {T}erms for {C}rack {P}roblems}.
\bjtitle{Communications in Partial Differential Equations}
\bvolume{28}(\bissue{5-6}),
\bfpage{869}--\blpage{926}
(\byear{2003})
\end{barticle}
\endbibitem

\bibitem[\protect\citeauthoryear{Schulze}{1990}]{schulze1990mellin}
\begin{bchapter}
\bauthor{\bsnm{Schulze}, \binits{B.-W.}}:
\bctitle{The mellin pseudo-differential calculus on manifolds with corners}.
In: \bbtitle{Symposium “Analysis on Manifolds with Singularities”, Breitenbrunn 1990},
pp. \bfpage{208}--\blpage{289}
(\byear{1990}).
\bcomment{Springer}
\end{bchapter}
\endbibitem

\bibitem[\protect\citeauthoryear{Buffa and Sauter}{2006}]{buffa2006acoustic}
\begin{barticle}
\bauthor{\bsnm{Buffa}, \binits{A.}},
\bauthor{\bsnm{Sauter}, \binits{S.}}:
\batitle{On the acoustic single layer potential: stabilization and {F}ourier analysis}.
\bjtitle{SIAM Journal on Scientific Computing}
\bvolume{28}(\bissue{5}),
\bfpage{1974}--\blpage{1999}
(\byear{2006})
\end{barticle}
\endbibitem

\bibitem[\protect\citeauthoryear{Ammari et~al.}{2009}]{ammari2009layer}
\begin{bbook}
\bauthor{\bsnm{Ammari}, \binits{H.}},
\bauthor{\bsnm{Kang}, \binits{H.}},
\bauthor{\bsnm{Lee}, \binits{H.}}:
\bbtitle{Layer {P}otential {T}echniques in {S}pectral {A}nalysis}.
\bsertitle{Mathematical Surveys and Monographs},
vol. \bseriesno{153}.
\bpublisher{American Mathematical Society},
\blocation{Providence, RI}
(\byear{2009})
\end{bbook}
\endbibitem

\bibitem[\protect\citeauthoryear{Gao}{1991}]{gao1991layer}
\begin{barticle}
\bauthor{\bsnm{Gao}, \binits{W.}}:
\batitle{Layer potentials and boundary value problems for elliptic systems in {L}ipschitz domains}.
\bjtitle{Journal of Functional Analysis}
\bvolume{95}(\bissue{2}),
\bfpage{377}--\blpage{399}
(\byear{1991})
\end{barticle}
\endbibitem

\bibitem[\protect\citeauthoryear{Kenig and Shen}{2011}]{kenig2011layer}
\begin{barticle}
\bauthor{\bsnm{Kenig}, \binits{C.}},
\bauthor{\bsnm{Shen}, \binits{Z.}}:
\batitle{Layer potential methods for elliptic homogenization problems}.
\bjtitle{Communications on pure and applied mathematics}
\bvolume{64}(\bissue{1}),
\bfpage{1}--\blpage{44}
(\byear{2011})
\end{barticle}
\endbibitem

\end{thebibliography}
\end{document}